\renewcommand{\tocsection}[3]{%
	\indentlabel{\@ifnotempty{#2}{\bfseries\ignorespaces#1 #2.\,\,}}\bfseries#3}
\renewcommand{\tocsubsection}[3]{%
	\indentlabel{\@ifnotempty{#2}{\ignorespaces#1 #2\quad}}#3}
\renewcommand{\tocsubsubsection}[3]{%
	\quad\quad\quad\indentlabel{\@ifnotempty{#2}{\ignorespaces#1 #2\quad}}#3}
\newcommand\@dotsep{4.5}
\def\@tocline#1#2#3#4#5#6#7{\relax
	\ifnum #1>\c@tocdepth 
	\else
		\par \addpenalty\@secpenalty\addvspace{#2}%
		\begingroup \hyphenpenalty\@M
		\@ifempty{#4}{%
			\@tempdima\csname r@tocindent\number#1\endcsname\relax
		}{%
			\@tempdima#4\relax
		}%
		\parindent\z@ \leftskip#3\relax \advance\leftskip\@tempdima\relax
		\rightskip\@pnumwidth plus1em \parfillskip-\@pnumwidth
		#5\leavevmode\hskip-\@tempdima{#6}\nobreak
		\leaders\hbox{$\m@th\mkern \@dotsep mu\hbox{.}\mkern \@dotsep mu$}\hfill
		\nobreak
		\hbox to\@pnumwidth{\@tocpagenum{\ifnum#1=1\bfseries\fi#7}}\par
		\nobreak
		\endgroup
	\fi}
\renewcommand\csname r@tocindent0\endcsname{0pt}
\newcommand\blfootnote[1]{%
	\begingroup
	\renewcommand\thefootnote{}\footnote{#1}%
	\addtocounter{footnote}{-1}%
	\endgroup
}
\def\l@subsection{\@tocline{2}{0pt}{2.5pc}{5pc}{}}
\DeclareFontFamily{U}{MnSymbolC}{}
\DeclareFontShape{U}{MnSymbolC}{m}{n}{
	<-5.5> MnSymbolC5
	<5.5-6.5> MnSymbolC6
	<6.5-7.5> MnSymbolC7
	<7.5-8.5> MnSymbolC8
	<8.5-9.5> MnSymbolC9
	<9.5-11.5> MnSymbolC10
	<11.5-> MnSymbolCb12
}{}
\newcommand{\lmss}[1]{\textrm{\normalfont{{\fontfamily{lmss}\selectfont #1}}}}
\newcommand{\sbt}{\,\begin{tikzpicture}[baseline=(X.base)]%
		\node[draw, fill,black,circle, inner sep=1pt] at (0,0.1) {};
		\node[circle,inner sep=0pt,outer sep=0pt] (X){$\ $};
	\end{tikzpicture}%
	\,
}
\newtheorem{theorem}{Theorem}
\theoremstyle{plain}
\newtheorem{proposition}[theorem]{Proposition}
\newtheorem*{proposition*}{Proposition}
\newtheorem*{theorem*}{Theorem}
\newtheorem{lemma}[theorem]{Lemma}
\theoremstyle{definition}
\newtheorem{remarque}{Remark}
\newtheorem{definition}[theorem]{Definition}
\def\gtimes{{\color{OliveGreen} \, \pmb{\otimes} \,}}
\def\oboxtimes{{\color{orange} \,\boxtimes\,}}
\newcommand{\boxdtimes}{\raisebox{-1pt}{$\oboxtimes$}}
\address{Department of Mathematical Sciences, Norwegian University of Science and Technology (NTNU), 7491 Trondheim, Norway.}
\title[Shuffle algebra and operator-valued probability theory]{A shuffle algebra point of view on\\ operator-valued probability theory}
\author{Nicolas Gilliers}
\begin{document}

\begin{abstract}
  We extend the shuffle algebra perspective on scalar-valued non-commutative probability theory to the operator-valued case.
	Given an operator-valued probability space with an algebra $B$ acting on it (on the left and on the right), we associate operators in the operad of multilinear maps on $B$ to the operator-valued distribution and free cumulants of a random variable. These operators define a representation of a PROS of non-crossing partitions. Using concepts from higher category theory, specifically $2$-monoidal categories, we define a notion of unshuffle Hopf algebra on an underlying PROS.
	We introduce a PROS of words insertions and show that both the latter and the PROS of non-crossing partitions are unshuffle Hopf algebras. The two relate by mean of a map of unshuffle bialgebra (in a $2$-monoidal sense) which we call the splitting map.
  Ultimately, we obtain a left half-shuffle fixed point equation corresponding to free moment-cumulant relations in a shuffle algebra of bicollection homomorphisms on the PROS of words insertions. Right half-shuffle and shuffle laws are interpreted in the framework of boolean and monotone non-commutative probability theory, respectively.
\end{abstract}
\keywords{operator-valued non-commutative probability theory, higher category theory, duoidal categories, operads, properads, PROS, shuffle algebra, half-shuffles}
\subjclass{46L53, 46L54, 18M60, 18M65, 18M80,16W25}
\maketitle
\blfootnote{Date: \today}
\tableofcontents

\section{Introduction}
\label{sec:intro}
The main objective of the present work is to extend the shuffle algebraic perspective on free, boolean and monotone moment-cumulant relations to the setting of operator-valued probability theory.
\subsection{Motivation and overview}
In classical probability theory, it is now well established that moment-cumulant relations are best understood in the context of \emph{M\"obius inversion} on the lattice of set partitions and its associated \emph{incidence co-algebra}, see \cite{speed1983cumulants}. The combinatorial side of Voiculescu's (scalar-valued) free probability theory finds its roots in the seminal work of Speicher \cite{speicher1994multiplicative}, who developed Rota's work by showing that upon replacing set partitions by \emph{non-crossing set partitions}, M\"obius inversion could be used to define an equivalent notion of cumulant in free probability. More precisely, in free probability theory, moments and cumulants are seen as linear maps on the incidence coalgebra of the lattice of non-crossing partitions and the free moment-cumulant relations are expressed in terms of the convolution product of the cumulant map with the zeta function. We refer the reader to \cite{mingo2017free,nica2006lectures} for an introduction to the theory of free probability.

When considering operator-valued moments and cumulants, Speicher's results can be (partially) extended \cite{speicher1998combinatorial}. Let $(\mathcal{A}, e, B)$ be an operator-valued probability space. Recall that $B$ is an algebra acting on the right and  on the left on the involutive algebra $\mathcal{A}$ and $e : \mathcal{A} \rightarrow B$ is a $B$-$B$ linear map \cite{mingo2017free}. The operator-valued expectation $e$ is extended to the lattice $\textrm{NC}$ of non-crossing partitions as a \emph{multiplicative function} $E=(e_{\pi})_{\pi\in\textrm{NC}}$. One contribution of the present work is to give a precise meaning to this multiplicativity property by using operads. For the time being, multiplicativity refers to the fact that $E(\pi)$ can be computed by \emph{composing}, in a certain sense, the values of $E$ on each block of $\pi$. In comparison to the scalar-valued case, $E(\pi)$ does also depend on the \emph{nesting} of the blocks in the non-crossing partition $\pi$ (two blocks of a partition are nested if one is contained in the convex closure of the other). Altogether operator-valued free cumulants define a function $K$, also multiplicative, on $NC$ and depend as well on the nesting of the blocks. The convolution of $E$ and $K$ with a \emph{scalar-valued} function makes sense, giving rise to operator-valued moment-cumulant relations. Extracting algebraic structures encoding the nesting of blocks is then primordial to a better understanding of the properties of free cumulants. As explained in this work, it also participates in a concise description of relations with their boolean and monotone counterparts.

Recently, K. Ebrahimi-Fard and F. Patras proposed a rather different perspective on moment-cumulant relations in the scalar-valued case \cite{ebrahimi2015cumulants,ebrahimi2016splitting,ebrahimi2018monotone}.

Their point-of-view does not involve M\"obius inversion on lattices of set partitions. Instead, it is based on \emph{combinatorial Hopf algebras}. More precisely, by describing a genuine shuffle algebra on words, \emph{(cumulants) moments} are encoded as values taken by some \emph{Hopf algebra (infinitesimal) characters}. This setting allows for a unified picture of the three different types of cumulants in non-commutative probability, i.e., free, monotone and boolean, as three faces of a single object, the unshuffle coproduct. This approach naturally gives rise to a (pre-)Lie theoretic description of the relations between the different cumulants in terms of shuffle adjoint transformations. It is critical to notice that the shuffle algebra setting does not involve at any point non-crossing partitions and that it has recently been successfully applied in the context of infinitesimal probability, provided that the base field of complex numbers is replaced by the Grassmann algebra, see \cite{celestino2019relations}. In the case of present interest, the target algebra of the morphisms we consider is non-commutative. As a result, the (pre-)Lie theoretic machinery developed in \cite{ebrahimi2018monotone} fails to work in the context of operator-valued probability spaces.

Until recently, it was unclear how the two perspectives, i.e., M\"obius inversion on the lattice of non-crossing set partitions on one hand and shuffle algebra on words on the other, could be related. In \cite{ebrahimi2019operads}, the authors started to address this question. They showed that lattice and shuffle algebra approaches are governed each by their respective operad of non-crossing partitions and the associated incidence co-algebras. The shuffle algebra approach is associated with the so-called \emph{gap-insertion operad of non-crossing partitions}, which is going to be extensively used in this work, while the M\"obius inversion formulation is encoded by the incidence coalgebra of a partition-refinement operad. The incidence bialgebra of the gap-insertion operad bears an unshuffle algebraic structure.

In the shuffle approach, the two functions $E$ and $K$ above are extended as algebra morphisms on the incidence bialgebra of the gap-insertion operad, solutions of the the following fixed point equations:
\begin{equation*}
	E = \varepsilon + e \prec E, \quad K = \varepsilon + k \prec K
\end{equation*}
The two infinitesimal morphisms $e$ and $k$ encode moments and free cumulants of all orders.

Our approach to extend the shuffle algebraic perspective on free, boolean and monotone moment-cumulant relations to the setting of operator-valued probability theory relies on the first part of reference \cite{ebrahimi2019operads}. We explain how considering moments and free cumulants of an operator-valued probability space as multiplicative functions on the lattice of non-crossing partitions naturally leads to an operadic perspective. Such a point of view encompasses the boolean cumulants as well seen as ``almost'' operadic morphisms on the word insertion operad. This result extends the picture developed in \cite{ebrahimi2019operads} to the operator-valued case.

As already mentioned, in this context both the \emph{nesting and the linear ordering of the blocks} of a non-crossing partition are essential and are algebraically implemented into the gap-insertion operad. The multiplicativity property of the moment map $E$ leads to an interpretation of the latter as values taken by an operadic morphism on the gap-insertion operad.
Since the moments associated with each block of a partition do not commute with each other (even in the case of a single random variable) polynomials in the incidence coalgebra of the gap-insertion operad should be considered as operators with multiple outputs. Incidentally, the bialgebraic structure should be replaced by a co-PROS/PROS structure. To be more precise, a word build from non-crossing set partitions (including the partition of the empty set) is an operator with as many inputs as gaps between the elements of the partitioned sets. A single output is associated with each partition in the word. The co-PROS structure (which is actually a simpler version of a plain non-symmetric coproperadic structure) is then (gradued) dual to the gap-insertion operad. A word on non-crossing set partitions should be seen as ``a horizontal object'' and applying the coproduct map on such a word results in two words that are vertically stacked.

We show that this new insight finds a transparent description by means of a so-called duoidal structure on bicollections (graded vector spaces with two gradings standing for the number of inputs and the outputs of an operator). A \emph{duoidal category} is endowed with two tensor products (we use the symbols $\raisebox{-1pt}{$\oboxtimes$}$ and $\gtimes$ throughout the article) satisfying a Lax property.
We shall use the terminology \emph{vertically}, respectively \emph{horizontally}, for sub-categories of the category of bicollections, or objects related to the monoidal structure \raisebox{-1pt}{$\oboxtimes$}, respectively $\gtimes$.
After having expounded the duoidal structure of the category of bicollections, we proceed to define the equivalent notion of a $\raisebox{-1pt}{$\oboxtimes$} \gtimes$-Hopf algebra. The latter has both a vertical product and vertical coproduct which are compatible through a horizontal algebraic structure.

Free and boolean cumulants are implemented as (horizontal) algebra morphisms for the concatenation product on the space of words on non-crossing partitions. In the free case, this morphism is also a PROS morphism. However, this does not hold in the boolean case and we obtain a morphism with a multiplicative property reminiscent to properties satisfied by morphisms on an operad of words-insertions. The convolution monoid of horizontal algebra morphisms on a \raisebox{-1pt}{$\oboxtimes$}$\gtimes$-Hopf algebra valued in a PROS of endomorphisms provides a unifying description of the gap-insertion and (almost) word insertion PROS morphisms.

We enrich the structure of \raisebox{-1pt}{$\oboxtimes$}$\gtimes$-Hopf algebra by introducing the notion of unshuffle Hopf algebra in a duoidal category. Once again, we show that the $\raisebox{-1pt}{$\oboxtimes$} \gtimes$-Hopf algebra of words on non-crossing partitions can be endowed with such a structure. The dual of this unshuffle structure gives rise to a shuffle algebraic structure on the class of bicollection morphisms from the PROS of words on non-crossing partitions to the PROS of multilinear maps on $B$. The operator-valued moments and free cumulants implemented as operadic morphisms on the gap-insertion operad satisfy, separately, left half-shuffle fixed point equations. The horizontal morphism implementing boolean cumulants is the solution of a right half-shuffle fixed point equation.

The introduction of a second monoidal structure is supported by the fact that the (pre-)Lie theoretic perspective on scalar moments and free, boolean, monotone cumulants is fully extended to the operator-valued case. In particular, the notion of infinitesimal character makes sense in this setting and requires horizontal composition of partitions (words), while the vertical direction (operadic composition of non-crossing partitions) is used to define the monoid the operator-valued moments, free and boolean cumulants are elements of.

The free and boolean moment-cumulant relations are then retrieved as fixed point equations in a shuffle algebra of bicollection morphisms on a PROS of words on random variables. This second shuffle algebra relates to the one associated to non-crossing partitions by mean of a shuffle algebra morphism, the so-called \emph{splitting map}. The half-shuffle fixed point equations are obtained as pulling-backs of half-shuffle fixed point equations satisfied by the boolean and free cumulants.

In the context of non-commutative probability theory, various authors have used Hopf algebras and operads from different perspectives. We mention the work of Friedrich--McKay \cite{friedrich2015homogeneous}, Hasebe--Lehner, \cite{hasebe2017cumulants}, Mastnak--Nica \cite{mastnak2010hopf} as well as the work of Gabriel \cite{gabriel2015combinatorial}. In the latter, the author defines Hopf algebraic structures related to additive and multiplicative convolutions by using a geometric perspective on the space of (non-crossing) partitions. Operadic approaches to moment-cumulant relations have already been exploited by Joshuat-Verg\`es, Menous, Thibon and Novelli in \cite{josuat2017free} to obtain an operadic version of the shuffle point of view developed by Ebrahimi-Fard and Patras. Another perspective on moment-cumulant relations in an operadic framework was developed by Drummond-Cole in \cite{drummond2016operadic} and \cite{Drummond-Cole2018}.
We end our (non-exhaustive) summary about previous works related to operator-valued probability theory with the two papers \cite{dykema2006stransform,dykema2007multilinear} by Dykema, together with the following remark. In these two papers the point of view adopted by the author is fundamentally analytical. This translates in the way non-crossing partitions are considered as operators. It is radically different from our approach. For instance, in Dykema's work a partition of a set $S$ has $|S|-1$ inputs, while in our case such a partition has $|S|+1$ inputs.

\medskip

{\bf{Acknowledgements}}: The author would like to thank Kurusch Ebrahimi-Fard and Joachim Kock for fruitful discussions. Nicolas Gilliers is supported by the ERCIM Alain Bensoussan fellowship programme.
\subsection{Operator-valued probability theory}

We start with a small (historical) account on free probability theory and its operator-valued version. Free probability theory was created in 1985 by Dan Voiculescu to understand free factors of von Neumann algebras. Originally developed in the vicinity of the theory of algebras of operators, \emph{freeness} drew probabilists' attention as the right algebraic framework to compute the asymptotic distribution of large random matrices. Creating a common notion encompassing (finite dimensional) distribution of random matrices and their asymptotic requires a further step in the abstraction, notably about what we understand as a \emph{probability space}. In a nutshell, a probability space allows for taking sums, products of random variables (its elements) and compute \emph{moments} of the latter. This last requirement implies that it is endowed with a linear map enjoying a notion of \emph{positivity} and called \emph{state}.

The very first example of a probability space is the commutative algebra of essentially bounded random variables $L^{\infty}(\Omega,\mathcal{F},\mathbb{P})$ on a classical probability space endowed with the usual expectation. In general, a probability space is a possibly non-commutative von Neumann algebra.

In classical probability, the \emph{conditional expectation} is a map acting on a space of essentially bounded random variables measurable with respect to a $\sigma$-field $\mathcal{F}_{1}$ valued in a smaller algebra of random variables measurable with respect to a sub $\sigma$-field $\mathcal{F}_{2} \subset \mathcal{F}_{1}$.

As such, the conditional mean of a random variable with respect to the sigma field $\mathcal{F}_{2}$ is not scalar-valued but algebra valued. Still, it enjoys the same positivity property as the scalar expectation does. Besides, it is linear with respect to left and right multiplication by random variables measurable with respect to the smaller sigma field.

These properties are algebraically translated in the settings of non-commutative probability as follows.

An operator-valued probability space $(\mathcal{A},E,B)$ is a bi-module involutive complex unital algebra $\mathcal{A}$ over an unital involutive algebra $B$ together with a $B$-bimodule positive unital morphism, $E: \mathcal{A}\rightarrow B$. In symbols, with $a\in \mathcal{A},~b_{1},b_{2} \in B$,
$$(b_{1}\cdot a)\cdot b_{2} = b_{1} \cdot (a \cdot b_{2}),\, (ba)^{\star} = a^{\star} b^{\star}, E(b_{1}ab_{2})=b_{1}E(a)b_{2},\, E(aa^{\star}) \in BB^{\star}, E(1_{\mathcal{A}})=1_{B}$$
We define boolean, free and monotone conditional cumulants using M\"obius inversion. As for the scalar-valued case, conditional free, boolean and monotone independence is characterized by the vanishing of mixed cumulants. The reader is directed to the monograph \cite{speicher1998combinatorial} for a detailed introduction on the combinatorial aspect of operator-valued probability theory. We denote by NC($n$) the set of all non-crossing partitions of $\llbracket 1,n \rrbracket$ and by $1_{n}$ the unique partition of NC($n$) with only one block.

For simplicity, we pick a single random variable $a \in \mathcal{A}$. The $B$-valued distribution of the random variable $a$ is the collection of elements in $B$:
\begin{equation}
	\label{distribution}
	E(b_{0}ab_{1}a\cdots ab_{n}), \quad b_{0},\ldots,b_{n}\in B,~n \geq 1.
\end{equation}
Let $a \in \mathcal{A}$ be a random variable, we denote by $B\!\left[a\right]$ the smallest $B$-$B$ bi-module algebra containing $a$. Speicher's original recursive definition of $e_{\pi}:B\!\left[a\right]^{\otimes_{B} |\pi|} \rightarrow B,~\pi \in \mathrm{NC}$ is as follows:
\begin{align}
	 & e_{\mathbf{1}_{n}}(a_{1}\otimes \cdots \otimes a_{n}) = E(a_{1}\cdots a_{n+1}),   \label{def:epi}                                                                                              \\
	 & e_{\pi}(a_{1}\otimes\cdots\otimes a_{n}) = e_{\pi_{1}}\big(a_{1}\otimes \cdots \otimes a_{k-1} \otimes E(a_{k}\otimes\cdots\otimes a_{l})a_{l+1}\otimes\cdots\otimes a_{n}\big) \nonumber .
\end{align}
Here $\llbracket k,l \rrbracket$ is an interval in $\pi$ and $\pi_{1}$ is the restriction of $\pi$ to $\llbracket 1,n\rrbracket \backslash \llbracket k,l\rrbracket$. From this perspective, $e_{\pi}$ is a map that takes random variables as inputs, which can be pictured as sitting on the legs of the partition $\pi$.

The perspective developed in this work starts with a different point of view on the distribution of the random variable $a$. In fact, we see it as a collection of homomorphisms in the operad $\mathrm{Hom}(B)$ of multilinear maps on $B$,
\begin{equation*}
	\mathrm{Hom}(B)(n) = \mathrm{Hom}(B^{\otimes n},B)
\end{equation*}
and with $\alpha,\beta_{1},\ldots,\beta_{|\alpha|} \in \mathrm{Hom}(B)$,
\begin{equation*}
	\Big(\alpha \circ (\beta_{1},\ldots,\beta_{|\alpha|})\Big)(b_{1},\ldots,b_{|\alpha|}) = \alpha(\beta_{1}(b_{1},\ldots,b_{|\beta_{1}|}),\ldots,\beta_{|\alpha|}(b_{|\beta_{1}|+\cdots+|\beta_{|\alpha|-1}|},\ldots,b_{|\beta_{1}|+\cdots+|\beta_{|\alpha|}|}))
\end{equation*}
and
\begin{equation}
	E(a^{\otimes n}) \in \mathrm{Hom}(B^{\otimes\,(n+1)},B),~E(a^{\otimes n})(b_{0},\ldots,b_{n}) = E(b_{0}a b_{1}ab_{2} \cdots ab_{n}),~b_{0},\ldots,b_{n} \in B.
\end{equation}
We prove in a forthcoming section that the sub-operad of $\mathrm{Hom}(B)$ generated by the operators $E(a^{\otimes n})$ $~n\geq 1$ is a representation of the gap-insertion operad.
Denote by Int($n$) the set of all interval partitions in NC($n$). Let us recall the free and boolean moment-cumulant relations for operator-valued cumulants:
\begin{equation}
	\label{eqn:momentcumulants1}
	\tag{MC}
	E(a_{1}\cdots a_{n})
	= \sum_{\pi \in \mathrm{NC}(n)} \kappa_{\pi}(a_{1},\ldots,a_{n})
	= \sum_{I \in \textrm{Int}(n)} \beta_{I}(a_{1},\ldots,a_{n}),~a_{1},\ldots,a_{2} \in \mathcal{A}.
\end{equation}
In the last equations, the definitions of $\kappa_{\pi}$ and $\beta_{I}$ follow from equations \eqref{def:epi} with $\kappa_{1_{l-k+1}}(a_{k}\otimes \ldots\otimes a_{l})$ (respectively $\beta_{1_{l-k+1}}(a_{k} \otimes a_{l}))$ in place of $E(a_{k}\otimes\cdots\otimes a_{l})$.

Since $\kappa_{1_{n}}$ does not enter in the definition of $\kappa_{\pi}$ with $\pi \neq 1_{n}$, the first relation in \eqref{eqn:momentcumulants1} yields an inductive definition of the maps $\kappa_{1_{n}},~n\geq 1$:
\begin{equation*}
\kappa_{1_{n}}(a_{1}\otimes\cdots\otimes a_{n}) = E(a_{1}\cdots a_{n}) - \sum_{\substack{\pi \in \mathrm{NC}(n) \\ \pi \neq 1_{n}}} \kappa_{\pi}(a_{1}\otimes \cdots \otimes a_{n})
\end{equation*}
The inductive definition of the boolean cumulants $\beta_{1_{n}},~n\geq 1$ proceeds from:
\begin{equation*}
	\beta_{1_{n}}(a_{1},\ldots,a_{n}) = E(a_{1}\cdots a_{n}) - \sum_{\substack{I \in \mathrm{Int}(n) \\  I \neq 1_{n}}} \beta_{I}(a_{1}\ldots a_{n}).
\end{equation*}
\subsection{Outline}
We now outline the details of the relations \eqref{eqn:momentcumulants1}. First, we construct the operad $\mathcal{N}\mathcal{C}$ of non-crossing partitions in Section \ref{sec:gapinsertion}. We then construct operadic morphisms $E$ and $K$ from this operad to the operad of homomorphisms on $B$ implementing the set of cumulants $\kappa_{\pi}$ and moments $E_{\pi},~ \pi \in \mathcal{NC}$. We then address the problem of constructing a (convolution) monoid containing those two morphisms. To that aim, we introduce in Section \ref{sec:duoidal} the notion of duoidal category as well as a notion of Hopf algebra in this context. In a duoidal category, objects can {be} composed in two different --but compatible-- ways, either \emph{horizontally}, either \emph{vertically}. We can then define two categories of algebras, respectively two categories co-algebras, one for each tensor product. All of this is explained in Section \ref{sec:duoidal}.

The central result in Section \ref{sec:duoidal} is Lemma \ref{lem:lax}.
We show in Proposition \ref{prop:ncunshuffle} that the space of non-commutative polynomials on non-crossing partitions can be endowed with such a structure.  As a consequence, its class of so-called horizontal algebra morphisms with values in the PROS of endomorphisms of $B$ is a monoid, containing both the maps $E$ and $K$ standing for the distribution and the free cumulants of a random variable.
The main result of Section \ref{sec:shufflepointofview} is the following one, where $T_{\gtimes}(\mathcal{N}\mathcal{C})$ (resp. $T_{\gtimes}(\mathrm{Hom}(B)))$ is the space of all polynomials on non-crossing partitions (resp. on multilinear maps on $B$).
\begin{proposition*}[Proposition \ref{prop:shuffle}]
	$(\overline{\mathrm{Hom}}_{\mathrm{Coll}_{2}}(T_{\gtimes}(\mathcal{N}\mathcal{C}), T_{\gtimes}(\mathrm{Hom}(B))), \prec,\succ,\star)$ is a shuffle algebra.
\end{proposition*}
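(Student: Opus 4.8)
The plan is to check the three defining identities of a shuffle (dendriform) algebra for the half-products $\prec$ and $\succ$ on the space $\overline{\mathrm{Hom}}_{\mathrm{Coll}_{2}}(T_{\gtimes}(\mathcal{N}\mathcal{C}), T_{\gtimes}(\mathrm{Hom}(B)))$. Writing $\star = \prec + \succ$, these are
\begin{gather*}
(f \prec g) \prec h = f \prec (g \star h), \qquad (f \succ g) \prec h = f \succ (g \prec h), \qquad (f \star g) \succ h = f \succ (g \succ h).
\end{gather*}
Before doing so I would make the definitions explicit: by construction the two half-products are the convolution half-products obtained by dualizing the vertical unshuffle coproduct. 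For bicollection morphisms $f,g$ one has $f \prec g = m_{\gtimes} \circ (f \gtimes g) \circ \Delta_{\prec}$ and $f \succ g = m_{\gtimes} \circ (f \gtimes g) \circ \Delta_{\succ}$, where $\Delta = \Delta_{\prec} + \Delta_{\succ}$ is the vertical (i.e. $\boxdtimes$) unshuffle coproduct of $T_{\gtimes}(\mathcal{N}\mathcal{C})$ and $m_{\gtimes}$ is the horizontal concatenation product of $T_{\gtimes}(\mathrm{Hom}(B))$.

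The key input is Proposition \ref{prop:ncunshuffle}: the unshuffle Hopf algebra structure it supplies carries, by definition, the three co-unshuffle compatibilities dual to the displayed shuffle relations,
\begin{gather*}
(\Delta_{\prec} \boxdtimes \mathrm{id})\,\Delta_{\prec} = (\mathrm{id} \boxdtimes \Delta)\,\Delta_{\prec}, \\
(\Delta_{\succ} \boxdtimes \mathrm{id})\,\Delta_{\prec} = (\mathrm{id} \boxdtimes \Delta_{\prec})\,\Delta_{\succ}, \\
(\Delta \boxdtimes \mathrm{id})\,\Delta_{\succ} = (\mathrm{id} \boxdtimes \Delta_{\succ})\,\Delta_{\succ}.
\end{gather*}
I would then verify each shuffle identity by evaluating both sides on an arbitrary $x \in T_{\gtimes}(\mathcal{N}\mathcal{C})$, expanding the iterated half-products through the above coassociativity-type identities, and collecting terms using the associativity of the horizontal product $m_{\gtimes}$ together with its compatibility with the counit. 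In the scalar-valued case this is the standard passage from a codendriform coalgebra to a dendriform convolution algebra; here the same bookkeeping applies vertically with $\boxdtimes$ in the source and $m_{\gtimes}$ in the target.

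The main obstacle is that the computation no longer takes place over a commutative base but inside a duoidal category with a non-commutative horizontal product, so the three relations do not reduce to pure coassociativity: at each step I must commute a vertical ($\boxdtimes$) coproduct past a horizontal ($\gtimes$) multiplication of morphisms. This is exactly where Lemma \ref{lem:lax} enters — the lax interchange between $\boxdtimes$ and $\gtimes$ is what makes $m_{\gtimes}\circ(f \gtimes g)\circ \Delta_{\bullet}$ well-defined as a bicollection morphism and what guarantees that the half-products stay inside $\overline{\mathrm{Hom}}_{\mathrm{Coll}_{2}}(\cdots)$. I would therefore first record closure of the space under $\prec$, $\succ$ and the coherence of the convolution from the lax axioms, and only then run the dualization; once closure and interchange are in place, associativity of $\star = \prec + \succ$ follows automatically from the three half-shuffle relations, completing the verification that the triple $(\,\cdot\,,\prec,\succ)$ is a shuffle algebra.
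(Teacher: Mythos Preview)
Your overall strategy---dualizing the three unshuffle coalgebra identities to obtain the three shuffle identities on the convolution side---is exactly what the paper does (it records the result as a ``direct corollary'' of Proposition~\ref{prop:unshufflenc} and the relations~\eqref{eqn:unshufflerel}). So the route is right.

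There is, however, a genuine confusion in your formulas that would make the argument fail as written. You define
\[
f \prec g = m_{\gtimes}\circ (f\gtimes g)\circ \Delta_{\prec},
\]
but this is not even type-correct: the half-coproducts $\Delta_{\prec},\Delta_{\succ}$ land in $T_{\gtimes}(\mathcal{N}\mathcal{C})\oboxtimes T_{\gtimes}(\mathcal{N}\mathcal{C})$, so one must compose with $f\oboxtimes g$, not $f\gtimes g$, and then apply the \emph{vertical} product $\nabla_{\mathrm{Hom}(B)}$ on $T_{\gtimes}(\mathrm{Hom}(B))$, not the horizontal concatenation $m_{\gtimes}$. The correct definitions (as in the paper, just before Proposition~\ref{prop:shuffle}) are
\[
f \prec g = \nabla_{\mathrm{Hom}(B)}\circ (f\oboxtimes g)\circ \Delta_{\prec},\qquad
f \succ g = \nabla_{\mathrm{Hom}(B)}\circ (f\oboxtimes g)\circ \Delta_{\succ}.
\]
Once this is fixed, the three shuffle relations follow from the three unshuffle relations~\eqref{eqn:unshufflerel} together with associativity of $\nabla_{\mathrm{Hom}(B)}$ alone; no horizontal structure and no interchange law is needed at this step. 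In particular, your invocation of Lemma~\ref{lem:lax} as ``the main obstacle'' is misplaced: the lax interchange is what makes the convolution of \emph{horizontal algebra morphisms} again a horizontal algebra morphism (Proposition~\ref{prop:monoids} and Definition~\ref{def:convprod}), but Proposition~\ref{prop:shuffle} is stated for arbitrary bicollection morphisms, and its proof is purely vertical. Also, the unshuffle relations you need come from Proposition~\ref{prop:unshufflenc}, not Proposition~\ref{prop:ncunshuffle} (the latter establishes the $\boxdtimes\gtimes$-Hopf structure, not the splitting of $\Delta$).
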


Thanks to the compatibility between the horizontal and vertical monoidal products, we can raise the notion of infinitesimal morphism in this context.
In Section \ref{sec:halfshuffle}, we compute explicitly the left and right half-shuffle exponentials. In particular, we show that both $K$ and $E$ are solutions of left half-shuffle fixed point equations. See Proposition \ref{prop:leftshuffle} as well as the Proposition \ref{prop:rightshuffle}:
\begin{equation}
  \label{eqn:fixedpointsun}
	K = \eta\circ \varepsilon + k \prec K,\quad
	E = \eta\circ\varepsilon + e \succ E.
\end{equation}
Each summand on the righthand side of equation \eqref{eqn:momentcumulants1} is interpreted as a value of solutions of half-shuffle fixed point equations.

Next, we define a structure for unshuffle Hopf algebra, similar to that of the operad of non-crossing partitions (adapted to the duoidal setting) on an operad of words insertions in Section \ref{sec:splitting}. We prove the following proposition.
\begin{proposition*}[Proposition \ref{prop:shuffleword}]
	$\left(\overline{\mathrm{Hom}}_{\mathrm{Coll}_{2}}(\mathcal{W}, T_{\gtimes}(\mathrm{Hom}(B)), \prec,\succ,\star\right)$ is a shuffle algebra.
\end{proposition*}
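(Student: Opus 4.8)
The plan is to mirror the proof of Proposition~\ref{prop:shuffle}, replacing the source unshuffle Hopf algebra $T_{\gtimes}(\mathcal{N}\mathcal{C})$ by the PROS of word insertions $\mathcal{W}$. The essential observation is that the shuffle algebra structure produced in Proposition~\ref{prop:shuffle} depends only on the source being an unshuffle Hopf algebra in the duoidal category of bicollections, together with the target $T_{\gtimes}(\mathrm{Hom}(B))$ being a vertical monoid (a PROS) compatible with the horizontal $\gtimes$-structure; the specific combinatorics of non-crossing partitions play no role in the \emph{formal} verification of the shuffle axioms. Hence the first and principal step is to equip $\mathcal{W}$ with an unshuffle Hopf algebra structure, the analogue for word insertions of the structure established on $\mathcal{N}\mathcal{C}$ in Proposition~\ref{prop:ncunshuffle}. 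Concretely, I would take the vertical product on $\mathcal{W}$ to be word-insertion composition and define the vertical coproduct together with its half-unshuffle decomposition $\Delta = \Delta_{\prec} + \Delta_{\succ}$ by deconcatenation, the two summands recording whether the distinguished (outermost) letter is routed to the left or to the right vertical factor.

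Once the unshuffle structure on $\mathcal{W}$ is in hand, I would define the two half-shuffle products on $\overline{\mathrm{Hom}}_{\mathrm{Coll}_{2}}(\mathcal{W}, T_{\gtimes}(\mathrm{Hom}(B)))$ by convolution, exactly as in Section~\ref{sec:shufflepointofview}: for bicollection homomorphisms $f$ and $g$,
\[
  f \prec g = \mu \circ (f \oboxtimes g) \circ \Delta_{\prec}, \qquad
  f \succ g = \mu \circ (f \oboxtimes g) \circ \Delta_{\succ},
\]
where $\mu$ is the vertical product of the target PROS and $\oboxtimes$ is the vertical tensor of bicollections, so that $\Delta_{\prec},\Delta_{\succ}\colon \mathcal{W} \to \mathcal{W} \oboxtimes \mathcal{W}$ type-check with $f \oboxtimes g\colon \mathcal{W}\oboxtimes\mathcal{W} \to T_{\gtimes}(\mathrm{Hom}(B)) \oboxtimes T_{\gtimes}(\mathrm{Hom}(B))$. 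The associative shuffle product is then $\star = \prec + \succ$, dual to the full coproduct $\Delta$.

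The two things left to check are well-definedness and the shuffle identities. For well-definedness I must verify that $f \prec g$ and $f \succ g$ are again bicollection homomorphisms respecting the horizontal $\gtimes$-structure; this is precisely where the duoidal (Lax) compatibility of the two tensor products enters, so I would invoke Lemma~\ref{lem:lax} to commute the horizontal structure past the convolution, verbatim as in the non-crossing case. For the three shuffle axioms
\[
  (f \prec g)\prec h = f \prec (g \star h), \qquad
  (f \succ g)\prec h = f \succ (g \prec h), \qquad
  f \succ (g \succ h) = (f \star g)\succ h,
\]
I would dualize them to the coshuffle (half-coassociativity) relations for $\Delta_{\prec}$ and $\Delta_{\succ}$ on $\mathcal{W}$; these reduce to a bookkeeping identity on how the distinguished letter interacts with the successive deconcatenation points, checked directly on words.

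I expect the main obstacle to lie not in the formal shuffle bookkeeping but in the first step: showing that the half-coproducts on $\mathcal{W}$ genuinely satisfy the unshuffle relations, and in particular that the chosen splitting is compatible with the vertical product, so that $\mathcal{W}$ is a bona fide unshuffle \emph{Hopf} algebra and not merely an unshuffle coalgebra. This is where the combinatorics of word insertions, being less rigid than non-crossing partitions, is most delicate. Here the splitting map relating $\mathcal{W}$ to $\mathcal{N}\mathcal{C}$ should be the cleanest tool: since it is a map of unshuffle bialgebras, the already-verified coshuffle relations on $\mathcal{N}\mathcal{C}$ can be transported and compared along it, rather than re-derived from scratch, and precomposition with it then realises the present shuffle algebra and the one of Proposition~\ref{prop:shuffle} as related by a morphism of shuffle algebras.
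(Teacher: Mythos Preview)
Your overall strategy is the same as the paper's: first endow $\mathcal{W}$ with an unshuffle $\boxdtimes\gtimes$-Hopf algebra structure (this is the actual content of Proposition~\ref{prop:shuffleword} in the body of the paper), and then dualize to obtain the shuffle algebra on $\overline{\mathrm{Hom}}_{\mathrm{Coll}_{2}}(\mathcal{W}, T_{\gtimes}(\mathrm{Hom}(B)))$ exactly as in Proposition~\ref{prop:shuffle}. The paper verifies the three coshuffle identities directly on $\mathcal{W}$ via a cuts/lowersets/uppersets formalism parallel to the non-crossing case, which is what your ``checked directly on words'' amounts to.

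Two points deserve correction. First, the vertical coproduct on $\mathcal{W}$ is \emph{not} deconcatenation. It is the graded dual of the word-insertion product $\nabla_{\mathcal{W}}$: on a word $a_1\cdots a_p$ one sums over all subsets $S\subset\{1,\ldots,p\}$, sending $w$ to $a_S \oboxtimes (U_0|\cdots|U_{|S|})$ where the $U_i$ are the (possibly empty) gaps left by $S$. The half-coproducts then split this sum according to whether the first letter $w^1$ lies in the extracted subword or in one of the gap words (see \eqref{eqn:unshuffle}). Plain deconcatenation would give a much smaller coproduct that is not dual to $\nabla_{\mathcal{W}}$ and would not produce the correct moment-cumulant relations downstream.

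Second, your proposed shortcut via the splitting map is circular in the paper's logical order. Proposition~\ref{prop:unshuffleword}, asserting that $Sp$ is an unshuffle bialgebra morphism, is stated and proved \emph{after} the unshuffle structure on $\mathcal{W}$ has been established; one needs the half-coproducts on $\mathcal{W}$ to even formulate the compatibility $(Sp\oboxtimes Sp)\circ\Delta^{\mathcal{W}}_{\prec,\succ}=\Delta^{T_{\gtimes}(\mathcal{N}\mathcal{C})}_{\prec,\succ}\circ Sp$. Even if you reorganized things, transporting the coshuffle identities back from $T_{\gtimes}(\mathcal{N}\mathcal{C})$ to $\mathcal{W}$ would require injectivity of $Sp^{\oboxtimes 3}$, an extra argument the direct cut computation avoids entirely.
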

In addition, we define a map $Sp: \mathcal{W}\rightarrow T_{\gtimes}(\mathcal{N}\mathcal{C})$, the splitting map, induces a morphisms between the two shuffle algebras constructed previously, see proposition Proposition \ref{prop:unshuffleword} it reads:
\begin{equation}
	Sp(a_{1}\ldots a_{n}) = \sum_{\pi \in \mathrm{NC}(n)} \pi \otimes a_{1}\ldots a_{n}.
\end{equation}
Then, by pulling-back on the words insertions operad the first equation in \eqref{eqn:fixedpointsun}, we arrive at our main result, Proposition \ref{prop:momentcumulantsrelation} stating that $\eqref{eqn:momentcumulants1}$ is equivalent to the fixed point equation in $\mathcal{W}$:
\begin{equation}
  E = \eta \circ \varepsilon + k \prec E.
\end{equation}

\section{The gap-insertion operad of non-crossing partitions}
\label{sec:gapinsertion}

In this section we settle the algebraic structure on non-crossing partitions used throughout this work.
We start with a short reminder on collections and operads (both set and linear). Then, we formalize in this framework the idea of inserting a partition into the gaps of another partition. The reader is directed to \cite{ebrahimi2019operads} for a detailed exposition on this so-called gap-insertion operad and related structures. For general background on algebraic operads, both planar and symmetric and related concepts, we refer the reader to the monograph \cite{loday2012algebraic}.

\subsection{Set partitions}
\label{sec:partitions}

Let $X$ be a finite, linearly ordered set. A \emph{partition} of $X$ into disjoint sets (called blocks), $\pi_{i},~ 1 \leq i \leq k$, is denoted $\pi = \{\pi_{1},\ldots,\pi_{k}\}$.

An \emph{isomorphism between two set partitions} is a monotone bijection of the underlying linearly ordered sets compatible with the block structures. Then, any partition is equivalent to a partition of the linearly ordered set $\llbracket 1,n \rrbracket$ for some $n\in \mathbb{N}$. We call a set partition of $\llbracket 1,n \rrbracket$ a standard partition. It is convenient to work with the standard representative of each class.

For $k,n \in \mathbb{N}^{\star}$, we denote by $\textrm{SP}(k,n)$ the set of iso-classes of partitions of sets of $n$ elements into $k$ blocks. The set $\textrm{SP}(0,0)$ contains only the empty partition. We put
\begin{equation*}
	\textrm{SP} = \bigsqcup_{1 \leq k \leq n} \textrm{SP}(k,n),~
	\textrm{SP}(n) = \bigsqcup_{k\leq n} \textrm{SP}(k,n),~
	\textrm{SP}_{0} = \textrm{SP}(0,0) \sqcup \textrm{SP}.
\end{equation*}
Given a monotone inclusion of linearly ordered sets $X \subset Y$ and given a partition $\pi$ of $Y$, we write $\pi_{|X}$ for the trace of the partition of $\pi$ on $X$.

\begin{definition}
	Let $X$ be a non-empty finite subset of $\mathbb{N}$ (or a linearly ordered set $Y$). The \emph{convex hull} of $X$ is by definition Conv($X$)=$\llbracket \min(X),\max(X)\rrbracket$. We shall say that $X$ is convex if Conv($X$) $= X$. Any finite subset $X \subset \mathbb{N}$ decomposes uniquely as
	\begin{equation*}
		X=X_{1} \sqcup \cdots \sqcup X_{k}
	\end{equation*}
	with each $X_{i}$ convex and each $X_{i}\sqcup X_{j}$ \emph{not} convex for $i\neq j$. The $X_{i}$ are called the convex components of $X$.
\end{definition}

\begin{definition}[Non-crossing partitions]
	A partition $\pi = \{\pi_{1},\ldots,\pi_{k}\}$ is non-crossing if there are no $a,b \in \pi_{i}$ and no $c,d \in \pi_{j}$ with $i\neq j$ such that $a < c < b < d$.
\end{definition}
See Figure \ref{fig:noncrossingpartition} for examples of partitions. For a detailed overview of the algebraic structures of the set of non-crossing partitions, as well as an historical account, see \cite{simion2000noncrossing}. The notion of non-crossing partitions has first been introduced by Kreweras in the seminal article \cite{kreweras1972partitions}.

\begin{figure}
	\includegraphics[scale=1]{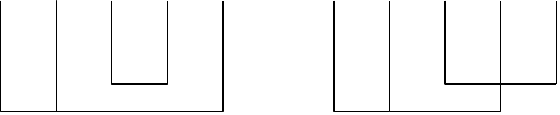}
	\caption{\label{fig:noncrossingpartition} Example of a non-crossing partition on the left, and a partition with a crossing on the right.}
\end{figure}

\begin{definition}[Interval partitions]
	\label{def:intervalpartitions}
	We say that a non-crossing partition $\pi\in\textrm{NC}$ is an \emph{interval partition} if all the blocks of $\pi$ are convex sets.
\end{definition}

\subsection{Algebraic planar operads}
\label{ssec:operads}

A collection $P$ is a sequence of vector spaces $(P(n))_{n\geq 1}$. A morphism between two collections is a sequence of linear morphisms $(\phi(n))_{n\geq1}$ with $\phi_{n}: P(n) \rightarrow P(n)$,~$n \geq 1$. The category of all collections is denoted $\mathrm{Coll}$. The tensor product $\sbt$ on the category $\mathrm{Coll}$ is the $2$-functor from $\mathrm{Coll} \times \mathrm{Coll}$ to $\mathrm{Coll}$ defined by:
\begin{align*}
	 & (P \sbt Q)(n) = \bigoplus_{\displaystyle{\substack{k \geq 1 \\n_{1}+\cdots+n_{k}=n } }}
	P(k) \otimes Q(n_{1}) \otimes \cdots \otimes Q(n_{k}),~
	 (f \sbt g)(n)= \bigoplus_{\displaystyle{\substack{k \geq 1   \\n_{1}+\cdots+n_{k}=n } }}
	f(k) \otimes g(n_{1}) \otimes \cdots \otimes g(n_{k}).
\end{align*}
The unit element for the tensor product $\sbt$ is the collection denoted by $\mathbb{C}_{\sbt}$ such that $\mathbb{C}_{\sbt}(n)=\delta_{n=1}\mathbb{C}$. An operad $\mathcal{P}$ is a monoid in the monoidal category $(\mathrm{Coll},\sbt, \mathbb{C})$, i.e., a triple $(P, \rho,\eta_{P})$ with
$$
	P \in \textrm{Coll},~\rho: P \sbt P  \rightarrow  P,~\eta_{P}: \mathbb{C} \rightarrow {P},
$$
satisfying $(\rho \sbt \textrm{id}_{P}) \circ \rho = (\textrm{id}_{P} \sbt \rho) \circ \rho$ and $(\eta_{P} \sbt \textrm{id}_{P}) \circ \rho = (\textrm{id}_{P} \sbt \eta_{P}) \circ \rho = \textrm{id}_{P}$. We use the notation $\sbt$ for the tensor product on collection to not confuse it with composition of functions. It is common to use the notation $\circ$ for an operadic composition:
\begin{equation}
	\rho(p \otimes (q_{1} \otimes \cdots \otimes q_{|p|})) = p \circ (q_{1}\otimes \cdots \otimes q_{|p|} )
\end{equation}
Accordingly, the notations $\circ_{i}$ for partial compositions:
\begin{equation}
	p\circ_{i}q = p \circ (1^{\otimes k-1} \otimes q \otimes \ldots 1^{|p|-k}),~ 1 \leq i \leq |p|.
\end{equation}
We should use these notations if there are no risks of confusion.
\subsection{Operad of partitions}
\label{ssec:partitionoperad}

A partition $\pi \in \textrm{SP}(n)$ is viewed as an operator with $n+1$ inputs. These inputs are the gaps between the elements of the partitioned set, including the front gap before $1$ and the back gap after $n$. We can insert $n+1$ partitions inside these gaps. It is clear that if $\pi$ is a non-crossing partition and we insert non-crossing partitions into the gaps of $\pi$ then the resulting partition is again non-crossing.

\begin{definition}
	\label{def:setpartitions}
	We set $\mathcal{S}\mathcal{P}(n) := \textrm{SP}(n-1)$. In particular, we have $\mathcal{S}\mathcal{P}(0)=\emptyset$ and $\mathcal{S}\mathcal{P}(1)=\{\emptyset\}$. The empty partition is the operad unit. Let $\pi$ be a partition and $(\alpha_{1},\ldots,\alpha_{|\pi|})$ a sequence of set partitions. The composition $\rho_{\mathcal{S}\mathcal{P}}(\pi \otimes  \alpha_{1} \otimes \cdots \otimes \alpha_{|\pi|})$ is obtained by inserting each partition $\alpha_{i}$ in between the two integers $i$ and $i+1$, $~i \leq 1$. In symbols:
	\begin{equation*}
		\rho_{\mathcal{S}\mathcal{P}}(\pi \otimes \alpha_{1} \otimes \cdots \otimes \alpha_{|\pi|})
		= \bigcup_{i=1}^{|\pi|}\{i-1+b,~b \in \pi_{i}\} \cup \tilde{\pi}
	\end{equation*}
	where $\tilde{\pi}$ is the partition of $\{|\pi_{1}|, |\pi_{1}|+|\pi_{2}|,\ldots,|\pi_{1}|+\cdots+|\pi_{n}|\}$ induced by $\pi$.
\end{definition}

\begin{figure}[!h]
	\centering
	\includegraphics[scale=0.85]{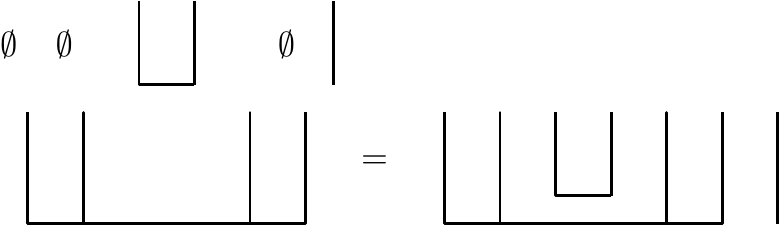}
	\caption{\label{fig:operadinsertion} Example of a composition in the gap-insertion operad $\mathcal{N}\mathcal{C}$.}
\end{figure}

\begin{lemma}
	The sequence $\mathcal{N}\mathcal{C} = (\mathcal{N}\mathcal{C}(n))_{n\geq 1}$ with $\mathcal{N}\mathcal{C}(n-1) = \mathrm{NC}(n)$ defines a set operad called the non-crossing gap-insertion operad when equipped with the composition law $\rho_{\mathcal{N}\mathcal{C}} = \rho_{\mathcal{S}\mathcal{P}}{}_{|\mathcal{N}\mathcal{C} \sbt \mathcal{N}\mathcal{C}}$.
\end{lemma}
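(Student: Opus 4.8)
The plan is to exhibit $\mathcal{N}\mathcal{C}$ as a \emph{sub-operad} of the set operad $(\mathcal{S}\mathcal{P},\rho_{\mathcal{S}\mathcal{P}},\emptyset)$ of Definition \ref{def:setpartitions}. Granting that $\mathcal{S}\mathcal{P}$ is a set operad (as recorded there and detailed in \cite{ebrahimi2019operads}), the associativity and unit axioms for $\rho_{\mathcal{N}\mathcal{C}}$, being the restriction of $\rho_{\mathcal{S}\mathcal{P}}$ to $\mathcal{N}\mathcal{C}\sbt\mathcal{N}\mathcal{C}$, follow automatically by restriction: they are identities of set maps that already hold on the larger collection. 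Thus the whole content of the lemma reduces to two points, namely that the operad unit of $\mathcal{S}\mathcal{P}$ lies in $\mathcal{N}\mathcal{C}$, and that $\rho_{\mathcal{S}\mathcal{P}}$ sends $\mathcal{N}\mathcal{C}\sbt\mathcal{N}\mathcal{C}$ into $\mathcal{N}\mathcal{C}$ (closure). The first is immediate: the unit is the empty partition, which is vacuously non-crossing and hence lies in $\mathcal{N}\mathcal{C}$.

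The heart of the proof is closure, which I would establish by a direct crossing analysis. Fix a non-crossing $\pi$ and non-crossing partitions $\alpha_{1},\ldots,\alpha_{|\pi|}$ inserted into the gaps of $\pi$, and write $\gamma=\rho_{\mathcal{S}\mathcal{P}}(\pi\otimes\alpha_{1}\otimes\cdots\otimes\alpha_{|\pi|})$. Each block of $\gamma$ is of exactly one of two kinds: an \emph{inserted} block, coming (after the order-preserving relabelling of Definition \ref{def:setpartitions}) from a block of some $\alpha_{i}$, or an \emph{ambient} block $\tilde{B}$, coming from a block $B$ of $\pi$. Two observations organize the argument: (i) all relabellings involved are monotone, so the order, and hence the crossing relation, is preserved on each source separately; and (ii) the points inserted into a single gap $i$ occupy a \emph{convex} interval $I_{i}$ of $\gamma$ that contains no ambient point, with $I_{i}$ lying entirely to the left of $I_{j}$ whenever $i<j$.

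Now suppose, for contradiction, that $\gamma$ has a crossing $a<c<b<d$ with $a,b$ in a block $P$ and $c,d$ in a block $Q\neq P$. I would rule this out case by case. If $P$ and $Q$ are both inserted from the \emph{same} $\alpha_{i}$, the crossing pulls back along the monotone relabelling to a crossing of $\alpha_{i}$, contradicting that $\alpha_{i}$ is non-crossing; if both are ambient, it pulls back likewise to a crossing of $\pi$. If $P,Q$ are inserted from \emph{distinct} $\alpha_{i},\alpha_{j}$, say $i<j$, then $a,b\in I_{i}$ and $c,d\in I_{j}$ force $a<b<c<d$ by observation (ii), incompatible with $a<c<b<d$. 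Finally, if $P$ is inserted from some $\alpha_{i}$ while $Q$ is ambient (or vice versa), then $a,b\in I_{i}$, and since $I_{i}$ is convex the point $c$ with $a<c<b$ must also lie in $I_{i}$; but $I_{i}$ contains no ambient point, contradicting $c\in Q$. As every case is impossible, $\gamma$ is non-crossing, which proves closure.

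With closure and the unit in hand, $\mathcal{N}\mathcal{C}$ is a sub-collection of $\mathcal{S}\mathcal{P}$ closed under $\rho_{\mathcal{S}\mathcal{P}}$ and containing the unit, hence a sub-operad, and the associativity and unitality of $\rho_{\mathcal{N}\mathcal{C}}$ are inherited verbatim. The only step requiring genuine work is the closure property; everything else is formal. If one prefers not to invoke that $\mathcal{S}\mathcal{P}$ is already an operad, the associativity of $\rho_{\mathcal{N}\mathcal{C}}$ can instead be checked directly, by verifying that iterated gap-insertions into gaps-of-gaps are independent of the order in which they are performed — routine bookkeeping on the linear order of the insertion points — which is where I would expect the only real (though still mild) effort to lie.
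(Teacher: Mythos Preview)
Your proposal is correct. The paper does not actually supply a proof of this lemma: it merely asserts, in the paragraph preceding Definition~\ref{def:setpartitions}, that ``it is clear that if $\pi$ is a non-crossing partition and we insert non-crossing partitions into the gaps of $\pi$ then the resulting partition is again non-crossing,'' and then states the lemma without further argument. Your case analysis is exactly the natural way to make this ``clear'' observation rigorous, and your reduction to closure (with associativity and unitality inherited from the ambient operad $\mathcal{S}\mathcal{P}$) is precisely the intended structure of the argument.
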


The two set operadic structures $\rho_{\mathcal{S}\mathcal{P}}$ and $\rho_{\mathcal{N}\mathcal{C}}$ induce linear operadic structures on the free vector spaces spanned by SP, respectively NC. In the following, we shall not distinguish between them.

For each integer $n\geq 1$, let $\textrm{Int}(n)$ be the set of all interval partitions of $\llbracket 1,n-1 \rrbracket$. Set $\textrm{Int} = \bigcup_{n\geq 0} \textrm{Int}(n)$, then $\textrm{Int}$ is a sub-collection of $\mathcal{N}\mathcal{C}$.

The gap-insertion operad of non-crossing partitions admits the following presentation in terms of generators and relations.

\begin{lemma}[Proposition 3.1.4 in \cite{ebrahimi2019operads}]
	For any $n\geq 1$, we put $1_{n+1} = \{\llbracket 1,n\rrbracket\}$. Then the operad $(\mathcal{N}\mathcal{C}, \rho_{\mathcal{N}\mathcal{C}})$ is generated by the elements $1_{n}$, $n \geq 1$ with the relation:
	\begin{equation*}
		\forall m,n \geq 1,\quad 1_{m} \circ_{m} 1_{n} = 1_{n} \circ_{1} 1_{m}.
	\end{equation*}
\end{lemma}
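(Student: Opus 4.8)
The plan is to realize $\mathcal{N}\mathcal{C}$ as a quotient of a free operad. Let $\mathcal{F}$ be the free planar operad on one generator $g_{k}$ in each arity $k\geq 2$ (together with the operad unit $g_{1}$), and let $\Phi\colon\mathcal{F}\to\mathcal{N}\mathcal{C}$ be the operad morphism determined by $g_{k}\mapsto 1_{k}$. First I would check that the proposed relation actually holds in $\mathcal{N}\mathcal{C}$: recalling that $1_{k}$ is the one-block partition on $\llbracket 1,k-1\rrbracket$ whose $k$ gaps are numbered from the front gap ($1$) to the back gap ($k$), Definition \ref{def:setpartitions} shows that inserting $1_{n}$ into the back gap $m$ of $1_{m}$, or inserting $1_{m}$ into the front gap $1$ of $1_{n}$, both produce the non-crossing partition of $\llbracket 1,m+n-2\rrbracket$ with the two side-by-side blocks $\llbracket 1,m-1\rrbracket$ and $\llbracket m,m+n-2\rrbracket$. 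Hence $1_{m}\circ_{m}1_{n}=1_{n}\circ_{1}1_{m}$, so $\Phi$ annihilates the operadic ideal $\mathcal{R}$ generated by the elements $r_{m,n}:=g_{m}\circ_{m}g_{n}-g_{n}\circ_{1}g_{m}$ and descends to $\overline{\Phi}\colon\mathcal{F}/\mathcal{R}\to\mathcal{N}\mathcal{C}$. The whole statement then reduces to proving that $\overline{\Phi}$ is an isomorphism, its surjectivity being the \emph{generation} claim and its injectivity the \emph{completeness} of the relation.

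For surjectivity I would argue by induction on the number of partitioned points, using a recursive decomposition of a non-crossing partition $\pi$. The convex hulls of the \emph{outer} blocks of $\pi$ (those not nested inside any other block) are pairwise disjoint intervals whose union is the whole ground set, so $\pi$ is the juxtaposition $\pi_{1}\mid\cdots\mid\pi_{r}$ of its single-outer-block pieces. A juxtaposition $\sigma\mid\rho$ of $\sigma\in\mathcal{N}\mathcal{C}(a)$ with $\rho$ is exactly $\sigma\circ_{a}\rho$ (insertion into the back gap), so the multi-piece case reduces to smaller partitions. When $\pi$ has a single outer block $B$ with $|B|=k$, it equals $1_{k+1}$ with the sub-partitions nested strictly between consecutive elements of $B$ grafted into the internal gaps and the unit in the front and back gaps; these sub-partitions are strictly smaller, so the induction closes. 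The base cases are the empty partition (the unit) and the singletons $1_{k}$ themselves.

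For injectivity I would produce a normal form. Using the same recursion, define a \emph{canonical tree} $T(\pi)\in\mathcal{F}$ by grafting the single-outer-block pieces successively into back gaps (right-nested juxtaposition) and, inside each piece, placing a single generator $g_{k+1}$ with the $T(\gamma_{j})$ in its internal slots; by construction $\Phi(T(\pi))=\pi$, so distinct partitions give $\mathcal{R}$-inequivalent canonical trees. It then remains to show every tree of $\mathcal{F}$ is $\mathcal{R}$-equivalent to a canonical one. The key sub-lemma here is a \emph{general juxtaposition-commutation} identity $\sigma\circ_{a}\rho=\rho\circ_{1}\sigma$ for arbitrary $\sigma\in\mathcal{N}\mathcal{C}(a)$ and $\rho$, which I would deduce from the generating relation $r_{m,n}$ by induction on the complexity of $\sigma$ and $\rho$ together with the sequential/parallel operad axioms. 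With this identity in hand, any tree can be rewritten so that outer juxtapositions are right-associated and each outer block is represented by one generator, i.e. brought into canonical form; combined with surjectivity this yields $\mathcal{F}/\mathcal{R}\cong\mathcal{N}\mathcal{C}$.

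The main obstacle is precisely this last \emph{completeness} step, namely that the single relation $r_{m,n}$ (plus the operad axioms) suffices to identify any two decompositions of the same non-crossing partition. I would establish it by orienting $r_{m,n}$ as a rewriting rule toward the right-nested normal form and invoking Newman's lemma: termination follows from a weight that strictly decreases under the rule, and local confluence reduces to checking the finitely many critical pairs coming from overlaps of the rule with itself. Verifying that each such critical pair resolves is the genuine content of the argument; everything preceding it is bookkeeping on the gap-insertion composition of Definition \ref{def:setpartitions}.
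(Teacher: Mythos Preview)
The paper does not give its own proof of this lemma; it is simply quoted from \cite{ebrahimi2019operads}, so there is no in-paper argument to compare against. Your sketch is a perfectly standard and essentially correct strategy for a presentation result of this kind: realize $\mathcal{N}\mathcal{C}$ as a quotient of the free planar operad on corollas $g_k$, check the relation (your computation that both $1_m\circ_m 1_n$ and $1_n\circ_1 1_m$ yield the two-block interval partition $\{\llbracket 1,m-1\rrbracket,\llbracket m,m+n-2\rrbracket\}$ is correct), prove generation by the outer-block recursion, and then prove completeness of the relation via a normal form.

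One simplification is worth pointing out. You set things up so that the canonical trees $T(\pi)$ satisfy $\Phi(T(\pi))=\pi$ and hence are pairwise $\mathcal{R}$-inequivalent. Once you also show that every tree is $\mathcal{R}$-equivalent to \emph{some} canonical tree (your spanning step), you are done: the canonical trees span $\mathcal{F}/\mathcal{R}$ and map bijectively onto the basis of $\mathcal{N}\mathcal{C}$, so $\overline{\Phi}$ is an isomorphism by a dimension count in each arity. You therefore do not need confluence or Newman's lemma at all; uniqueness of the normal form is a \emph{consequence} of the isomorphism, not an input to it. This removes what you flagged as the main obstacle. The only genuine work left is the ``general juxtaposition-commutation'' $\sigma\circ_{|\sigma|}\rho\equiv\rho\circ_1\sigma$ in $\mathcal{F}/\mathcal{R}$, which indeed follows by a straightforward double induction on the number of generators in $\sigma$ and $\rho$ using the sequential and parallel axioms of a planar operad together with the single relation $r_{m,n}$. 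A small notational slip: in that sub-lemma you write $\sigma\in\mathcal{N}\mathcal{C}(a)$, but the identity must be established for trees in $\mathcal{F}$ modulo $\mathcal{R}$, not for partitions.
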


The gap-insertion operad implements algebraically the \emph{nesting} of blocks of a partition. Let $a \in \mathcal{A}$ a random variable, we defined in the introduction for each $n\geq 1$ the map from $B^{\otimes n}$ to $B$:
\begin{equation*}
	E_{n+1}(b_{0},\ldots,b_{n}) = E(a^{\otimes n})(b_{0},\ldots,b_{n}) = E(b_{0}ab_{1}a\cdots ab_{n})
\end{equation*}
with $b_{0},\ldots,b_{n} \in B$. Now since $E$ is $B$-$B$ bimodule map, we get
\begin{align*}
	\label{eqn:relation1}
	 & E_{1}(b_{0}) = b_{0}                             \\
	 & E_{n} \circ_{n} E_{m} = E_{m} \circ_{1} E_{n}.
\end{align*}
In fact, we have, for $b_{1},\ldots,b_{n+m} \in B$
\begin{align*}
	 (E_{n} \circ_{n} E_{m})(b_{0},\ldots,b_{n+m}) &= E(b_{0}ab_{1}\cdots a E_{m}(b_{n},\ldots,b_{n+m})= E(b_{0}ab_{1}\cdots a)E(b_{n}a\cdots b_{n+m}) \\
	  & = E(E(b_{0}ab_{1}\cdots a)b_{n}a\cdots b_{n+m})= E(E(b_{0}ab_{1}\cdots ab_{n})a\cdots b_{n+m})\\
		&=(E_{m}\circ_{1}E_{n})(b_{0},\ldots,b_{n+m}).
\end{align*}
Hence, there exists an unique operadic morphism $E : \mathcal{N}\mathcal{C} \rightarrow \mathrm{Hom}(B)$ such that $E(1_{n}) = E_{n}$. The free cumulants of $E$ enjoy the same property: there exists an unique operadic morphism $K:\mathcal{N}\mathcal{C} \rightarrow \mathrm{Hom}(B)$ such that
\begin{equation*}
	K(1_{n})(b_{0},\ldots,b_{n}) = k_{n}(b_{0}ab_{1},\ldots,a b_{n}).
\end{equation*}

\smallskip

Non-crossing partitions are central to free probability theory while in boolean probability theory, interval partitions are the main combinatorial objects. The collection of interval partitions is not a sub-operad of $\mathcal{N}\mathcal{C}$.

To implement the shuffle point of view for operator-valued probability theory, we will adapt the construction given in \cite{ebrahimi2019operads} to the operator-valued case. In \cite{ebrahimi2019operads}, the authors start with the definition of a bialgebraic structure on the vector space $N$ of non-commutative polynomials in non-crossing partitions. On $N$, the operadic composition $\rho_{\mathcal{N}\mathcal{C}}$ induces a coproduct $\Delta: N \rightarrow N \otimes N$:
\begin{equation}
	\Delta(\pi) = \sum_{\alpha \circ \beta_{1},\ldots,\beta_{|\alpha|}} \alpha \otimes \beta_{1}\cdots\beta_{|\alpha|}.
\end{equation}

In the scalar case, the moment and the free cumulants of a random variable, seens as functions on the set of non-crossing partitions NC are implemented as characters on $N$ which are elements of the convolution group associated with the coproduct $\Delta$.

In comparison, for the operator-valued case, we construct two algebra morphisms $T_{\gtimes}(E)$ and $T_{\gtimes}(K)$ from $N$ to the space of non-commutative polynomials on multilinear maps on $B$ and extending to $N$ the maps $E$ and $K$ constructed previously.

In addition, since $K$ and $E$ are operadic morphisms, we see that two natural compositions of words on non-crossing partitions should be considered: an concatenation (which will be called horizontal composition) and a composition extending the operadic structure on non-crossing partitions (which will be called vertical). The following section evolves on this idea using the notion of duoidal category.

Notice that we have considered so far the case of a single random variable, but the construction of the operadic morphisms $K$ and $E$ extends readily to the multivariate case by considering \emph{coloured partitions}.

\section{The duoidal category of bicollections}
\label{sec:duoidal}

Elements of a collection are operators with many inputs and a single output. The operadic structure models compositions between these operators. In many branches of mathematics, ranging from probability theory, both classical and non-commutative, to gauge theory and quantum groups algebraic structures with products and co-products that stand for merging, respectively cutting, processes have become popular. The framework of operads is however too narrow to treat such structures completely.

Indeed, it turns out to be important to be able to handle operations with multiple in- and outputs. After the work of Adams and McLane,  \cite{adams1978infinite, mac1971categorical} and Vallette \cite{vallette2007koszul}, the right algebraic framework appears to be the one of properads, props and their extensions.

The construction we expose in the section is reminiscent of the props setting, but is in fact much simpler as it does not involve actions of the symmetric groups.

We introduce now the prominent algebraic structure to the present work, i.e., the category of bicollections endowed with two balanced monoidal structures. In the literature, such a category is called a \emph{duoidal}\footnote{\url{https://ncatlab.org/nlab/show/duoidal+category}} category or a $2$-monoidal category. The interested reader is directed to monograph \cite{aguiar2010monoidal} for a comprehensible introduction to $2$-monoidal categories. This section focuses on the so-called \emph{laxity property} stated in \eqref{eqn:laxity}. It is beyond the scope for the present work to provide the reader with a detailed account on the notion of duoidal category. Nevertheless, for the sake of completeness, we will give the definition of such a category, without fully commenting on it.

\begin{definition}[Duoidal category]
	\label{def:duoidalcategory}
	\emph{A duoidal category}, or $2$-monoidal category, is a category $\mathcal{C}$ endowed with a monoidal structure $(\gtimes, E_{\gtimes})$, together with an additional monoidal structure $(\oboxtimes,E_{\oboxtimes})$ such that $\oboxtimes: C \times C \rightarrow C$ and $E_{\oboxtimes} : 1 \rightarrow C$ are lax monoidal functors with respect to $(\gtimes,E_{\gtimes})$ and the coherence axioms of $(\oboxtimes, E_{\oboxtimes})$ are monoidal natural transformation with respect to $(\gtimes, E_{\gtimes})$. The laxity of $(\oboxtimes, E_{\oboxtimes})$ consists of natural transformations
	\begin{align*}
		 & \label{eqn:genlax}(C_{1} \oboxtimes C_{2}) \gtimes (C_{3}\oboxtimes C_{4}) \overset{R_{C_{1},C_{2},C_{3},C_{4}}}{\longrightarrow} (C_{1} \gtimes C_{3})\oboxtimes (C_{2}\gtimes C_{4}),
	\end{align*}
	together with morphisms $E_{\gtimes} \rightarrow E_{\gtimes} \oboxtimes E_{\gtimes},\quad E_{\oboxtimes} \gtimes E_{\oboxtimes} \rightarrow E_{\oboxtimes},\quad E_{\gtimes}\rightarrow E_{\oboxtimes}$.

\end{definition}

%

\smallskip

The main result of this section is the following one.

\begin{proposition*}
	The category of bicollections is a duoidal category.
\end{proposition*}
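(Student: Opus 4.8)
The plan is to exhibit the two monoidal structures and the comparison (laxity) data explicitly, and then to reduce the duoidal coherence axioms to formal properties of direct sums of vector spaces. First I would fix the category precisely: a bicollection is a doubly-graded family $P=(P(m,n))_{m,n\geq 0}$ of vector spaces, where $m$ counts outputs and $n$ counts inputs, and a morphism is a family of linear maps preserving the bigrading; call this category $\mathrm{Coll}_2$. The \emph{horizontal} product is concatenation,
\[
	(P\gtimes Q)(m,n)=\bigoplus_{\substack{m_1+m_2=m\\ n_1+n_2=n}} P(m_1,n_1)\otimes Q(m_2,n_2),
\]
with unit $E_{\gtimes}$ supported in bidegree $(0,0)$ with value $\mathbb{C}$ (the empty operator). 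The \emph{vertical} product is the composition product,
\[
	(P\oboxtimes Q)(m,n)=\bigoplus_{k\geq 0} P(m,k)\otimes Q(k,n),
\]
matching the $k$ outputs of the lower factor with the $k$ inputs of the upper one, with unit $E_{\oboxtimes}$ the diagonal bicollection $E_{\oboxtimes}(m,n)=\delta_{m,n}\,\mathbb{C}$. I would point out that $\oboxtimes$ restricts on single-output bicollections to the operadic product $\sbt$ of Section~\ref{ssec:operads}, the several outputs of the lower factor being exactly the letters of a word, so that this is the correct lift of gap-insertion composition to words.

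Next I would verify that $(\gtimes,E_{\gtimes})$ and $(\oboxtimes,E_{\oboxtimes})$ are each strict monoidal structures. For $\gtimes$ this is immediate: associativity follows from associativity of $+$ on each grading and of $\otimes$, and the unit laws hold because the only splitting of a bidegree that meets $(0,0)$ is trivial. For $\oboxtimes$, associativity reduces to the canonical identification of $\bigoplus_{k,\ell} P(m,k)\otimes Q(k,\ell)\otimes S(\ell,n)$ computed in either bracketing, and the unit laws follow from the Kronecker delta in $E_{\oboxtimes}$ collapsing the summation index to $m$, respectively $n$. Both are purely formal manipulations of direct sums.

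The heart of the argument is the laxity data making $(\oboxtimes,E_{\oboxtimes})$ a lax monoidal functor for $(\gtimes,E_{\gtimes})$. I would build the interchange transformation
\[
	R_{C_1,C_2,C_3,C_4}\colon (C_1\oboxtimes C_2)\gtimes(C_3\oboxtimes C_4)\longrightarrow (C_1\gtimes C_3)\oboxtimes(C_2\gtimes C_4)
\]
by expanding both sides. The source is a sum of terms $C_1(m_1,k_1)\otimes C_2(k_1,n_1)\otimes C_3(m_2,k_2)\otimes C_4(k_2,n_2)$ with $m_1+m_2=m$, $n_1+n_2=n$; the target carries the same data but with an extra freedom, the matching $k_1',k_2'$ seen by $C_1\gtimes C_3$ need not coincide with the matching $k_1'',k_2''$ seen by $C_2\gtimes C_4$, subject only to $k_1'+k_2'=k_1''+k_2''$. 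The map $R$ is the inclusion of the diagonal summands where $k_1'=k_1''=k_1$ and $k_2'=k_2''=k_2$; it is manifestly natural, and it is only lax — a proper inclusion, not an isomorphism — precisely because the target also contains summands in which these intermediate matchings disagree across the horizontal split. Alongside $R$ I would record the three unit comparisons: $E_{\gtimes}\to E_{\gtimes}\oboxtimes E_{\gtimes}$ (an isomorphism, both sides being $\mathbb{C}$ in bidegree $(0,0)$); $E_{\oboxtimes}\gtimes E_{\oboxtimes}\to E_{\oboxtimes}$ (the fold map $\mathbb{C}^{m+1}\to\mathbb{C}$ identifying the $m+1$ splittings of the identity on $m$ strands with the single identity); and $E_{\gtimes}\to E_{\oboxtimes}$ (the inclusion of the common bidegree-$(0,0)$ component).

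Finally I would check the duoidal coherence axioms: compatibility of $R$ with the two associators, and of the three unit morphisms with $R$. Since every structure map is either a canonical reindexing of a direct sum or the inclusion, respectively fold, of summands cut out by equations among the four grading variables, each coherence diagram commutes by an equality of indexing sets together with the symmetric monoidal coherence of $(\mathrm{Vect},\otimes)$; this is the content I would isolate as the laxity lemma. The main obstacle is not any single computation but getting the bookkeeping of the grading variables right so that $R$ points in the asserted direction and is seen to be lax rather than strong. Once source and target of $R$ are written out as above, the direction and the failure of invertibility are forced by the mismatch of the intermediate matching indices, and all remaining axioms follow formally.
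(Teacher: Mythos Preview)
Your proposal is correct and follows essentially the same approach as the paper: both define the horizontal and vertical monoidal structures explicitly, then construct the interchange map $R$ (the paper does this via the braiding of $\mathrm{Vect}$, you via the inclusion of diagonal summands---these are the same map), together with the unit comparison morphisms. If anything you are slightly more thorough, since the paper proves only the laxity statement (your inclusion $R$) in detail and leaves the remaining duoidal coherence axioms implicit, whereas you at least name them and indicate why they reduce to reindexings of direct sums.
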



Historically, there are at least two other notions similar to that of a duoidal category, introduced in earlier work. The first one is the notion of \emph{two fold monoidal categories} of Baltenau and Fiedorowcz \cite{balteanu1996coherence,kock2007note}. In such a category, the two monoidal structures are required to be \emph{strict} (this property holds for the category of bicollections, see below) but also to share a common unit object (which is not the case for the category of bicollections). Later Forcey, Siehler and Sowers \cite{forcey2007operads} improved upon this notion by removing the strictness assumption, allowing the unit objects to be different, but requiring stronger assumptions on the units. This fails for the duoidal category of bicollections.

We are ultimately interested in the categories of algebras in a duoidal category with respect to one of the two monoidal products (or the two at a time). A PROS, as used for example in \cite{bultel2016combinatorial}, is both an algebra in the monoidal category ($\mathcal{C}, \oboxtimes, E_{\oboxtimes}$) and in ($\mathcal{C}, \gtimes, E_{\gtimes}$). In addition, we require for the two compositions to be compatible in a certain sense.

The very first example of a $2$-monoidal category is provided by a symmetric monoidal category, or more generally by a braided monoidal category. In that case, the two monoidal structures coincide. In this case, the category of PROS in a braided monoidal category contains braided commutative algebras.

Among (braided) commutative algebras, we find commutative bialgebras and commutative Hopf algebras. A commutative Hopf algebra $H$ provides a functor $F_{H}$ from the category of commutative algebras to the category of groups:
$$F_{H}(A) = \mathrm{Hom}_{\textrm{Alg}}(H,A)$$
with $A$ a commutative algebra, whereas a bialgebra provides, by the same formula, a functor from the category of commutative algebra to the category of monoids.
In that respect and in view of the application to non-commutative probability theory, it is thus natural to look for PROS in a duoidal category that are also coalgebras for the tensor product $\oboxtimes$ and furthermore those that can be endowed with an antipodal map.

\subsection{The horizontal and vertical tensor product}
\label{subsect:horizontalvertical}

In this section, we formalize the idea of composing operators with multiple in- and outputs (many-to-many operators). Branching outputs of an operator to the inputs of another one defines a product on a space of many-to-many operators. We refer to this product by the terminology \emph{vertical}. There is another way to compose such operators: concatenating the outputs (resp. the inputs) of two operators. This is the \emph{horizontal} product. We give a definition of a PROS in the category of bicollections using the language of $2$-monoidal categories (or duoidal categories).

\begin{definition}[Bicollection]
	\label{def:bigradedcollection}
	A bicollection is a two parameters family of vector spaces $$P = (P(n,m))_{n,m \geq 0}.$$ A morphism between two bicollections $P$ and $Q$ is a family of linear maps $\phi(n,m):P(n,m) \rightarrow Q(n,m)$. The category of all bicollections is denoted $\mathrm{Coll}_{2}$.
\end{definition}


\begin{definition}[Horizontal tensor product]
	\label{def:horibicollection}
	The horizontal tensor product $\gtimes$ is the functor $\gtimes : \mathrm{Coll}_{2}\times\mathrm{Coll}_{2} \rightarrow \mathrm{Coll}_{2}$ defined by:
	\begin{align*}
		 & (P\gtimes Q)(n,m) = \displaystyle\bigoplus_{\substack{n_{1}+n_{2}=n \\ m_{1}+m_{2}=m}} P(n_{1},m_{1}) \otimes  Q(n_{2},m_{2}),~
		(f \gtimes g)(n, m) = \displaystyle\bigoplus_{\substack{n_{1}+n_{2}=n             \\ m_{1}+m_{2}=m}} f(n_{1},m_{1}) \otimes g(n_{2},m_{2}).
	\end{align*}
	The identity element for the horizontal tensor product $\gtimes$ is the bicollection $\pmb{\mathbb{C}}_{\gtimes}(n,m) = \delta_{n,m=0}\mathbb{C}$.
\end{definition}

\begin{definition}[Vertical tensor product]
	\label{def:vertbicollection}
	The tensor product $\oboxtimes$ on the category Coll$_{2}$ is defined by:
	\begin{align*}
		 & (P\oboxtimes Q)(n,m) = \bigoplus_{k \geq 0} P(n,k) \otimes  Q(k,m),~(f \oboxtimes g)(n,m) = \bigoplus_{k \geq 0} f(n,k) \otimes g(k,m).
	\end{align*}
	The identity element for the tensor product $\oboxtimes$ is the bicollection $\pmb{\mathbb{C}}_{\oboxtimes}(n,m) = \delta_{n=m}\mathbb{C}$.
\end{definition}

Fundamental examples of bicollections are obtained by taking polynomials on operators in a given collection. Pick $P = (P_{n})_{n\geq 1}$ a collection, and define a bicollection $P$ by
\begin{equation}
	\label{eqn:wordscollection}
	P(m,n) = \bigoplus_{\substack{k_{1} + \cdots + k_{n}=m}} P_{k_{1}}\gtimes\cdots\gtimes P_{k_{n}} \textrm{ and set } 	T_{\gtimes}(P) = \mathbb{C}1 \oplus P.
\end{equation}
with $1$ being an element with $0$ inputs and zero $0$ outputs. All bicollections we work with are of the form \eqref{eqn:wordscollection}. For example, considering $P = (\mathrm{Hom}(B^{\gtimes n},B))_{n\geq 0}$ the bicollection $T_{\gtimes}(\mathrm{Hom}(B))$ plays a prominent role in the sequel.
We have already encountered another example of bicollection whose homogeneous components are spanned by forests with a certain number of trees and leaves.

In Figure~\ref{fig:hvel} the reader will find a pictorial description of elements in the horizontal and vertical tensor products. In the vertical tensor product, the number of inputs of the operator on the lower level matches the number of outputs of the operator on the upper level. In comparison with the vertical tensor product introduced in \cite{vallette2007koszul}, the tensor product $P\oboxtimes Q$ we introduce here is a sum over planar 2-level diagrams with only one vertex on each level (see Fig. \ref{fig:hvel}). In \cite{vallette2007koszul}, the author considers bisymmetric sequences of vector spaces, and the monoidal structure involves either a sum over $2-$level connected graphs for properads or on connected graphs for props.

It is easy to design a generalization of the vertical tensor product: we sum over connected planar diagrams connecting vertices placed on the integer points of the lines $\mathbb{R} \times \{0\}$ to vertices placed on the line $\mathbb{R} \times \{1\}$.

Let us mention that the vertical tensor $\oboxtimes$ has also been considered by Bultel and Giraudo in \cite{bultel2016combinatorial}, in which the authors define Hopf algebraic type structures on PROS.
The vertical tensor product for a pair of bicollections of the form \eqref{eqn:wordscollection}, can also be depicted as a sum over (not-necessarily connected) two level planar graphs, obtained as concatenation of corollas.

\begin{figure}[!ht]
	\label{fig:hvel}
\begingroup%
  \makeatletter%
  \providecommand\color[2][]{%
    \errmessage{(Inkscape) Color is used for the text in Inkscape, but the package 'color.sty' is not loaded}%
    \renewcommand\color[2][]{}%
  }%
  \providecommand\transparent[1]{%
    \errmessage{(Inkscape) Transparency is used (non-zero) for the text in Inkscape, but the package 'transparent.sty' is not loaded}%
    \renewcommand\transparent[1]{}%
  }%
  \providecommand\rotatebox[2]{#2}%
  \newcommand*\fsize{\dimexpr\f@size pt\relax}%
  \newcommand*\lineheight[1]{\fontsize{\fsize}{#1\fsize}\selectfont}%
  \ifx\svgwidth\undefined%
    \setlength{\unitlength}{242.90375172bp}%
    \ifx\svgscale\undefined%
      \relax%
    \else%
      \setlength{\unitlength}{\unitlength * \real{\svgscale}}%
    \fi%
  \else%
    \setlength{\unitlength}{\svgwidth}%
  \fi%
  \global\let\svgwidth\undefined%
  \global\let\svgscale\undefined%
  \makeatother%
  \begin{picture}(1,0.42924327)%
    \lineheight{1}%
    \setlength\tabcolsep{0pt}%
    \put(0,0){\includegraphics[width=\unitlength,page=1]{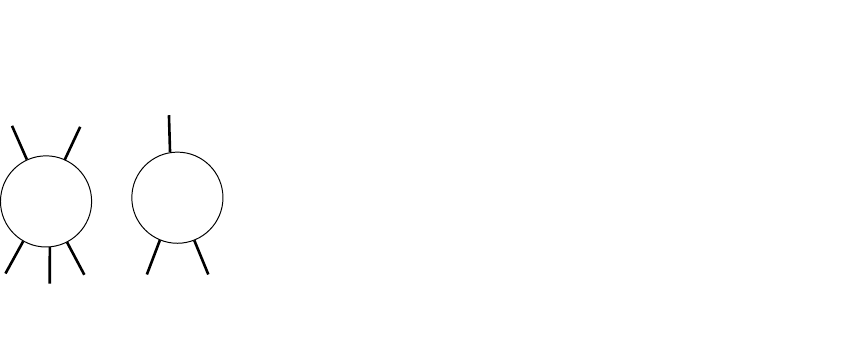}}%
    \put(0.10649765,0.17787041){\color[rgb]{0,0,0}\makebox(0,0)[lt]{\lineheight{1.25}\smash{\begin{tabular}[t]{l}$\gtimes$\end{tabular}}}}%
    \put(0,0){\includegraphics[width=\unitlength,page=2]{hvel.pdf}}%
    \put(0.3969756,0.20344081){\color[rgb]{0,0,0}\makebox(0,0)[lt]{\lineheight{1.25}\smash{\begin{tabular}[t]{l}$\oboxtimes$\end{tabular}}}}%
    \put(0,0){\includegraphics[width=\unitlength,page=3]{hvel.pdf}}%
  \end{picture}%
\endgroup%

	\caption{\label{fig:hvel} On the left, we have elements in the horizontal $\gtimes$ and vertical $\oboxtimes$ tensor product s. On the right, we have a bundle.}
\end{figure}

\begin{remarque}
	The tensor product $\gtimes$ is a symmetric one, whereas $\oboxtimes$ is not. Neither the horizontal nor the vertical tensor product come with injections and the units for these two tensor products are not initial objects.
\end{remarque}


In the sequel, to distinguish elements in the tensor products $A\oboxtimes B$ or $A\gtimes B$, we use the notation $a \oboxtimes b$, respectively $a\gtimes b$. In the first case, the notation emphasizes that fact that the number of inputs of $a$ matches the number of outputs of $b$. The standard monoidal tensor product on the category Vect$_{\mathbb{C}}$ of vector spaces is denotes $\otimes$.
\begin{proposition}
	\label{lem:lax}
	Let $C_{i},~ 1 \leq i \leq 4$ be four  bicollections, then
	\begin{equation}
		\label{eqn:laxity}
		\left(C_{1} \oboxtimes C_{2}\right) \gtimes \left(C_{3}\oboxtimes C_{4} \right) \hookrightarrow \left(C_{1} \gtimes C_{3}\right) \oboxtimes \left(C_{2}\gtimes C_{4}\right).
	\end{equation}
	The morphism is denoted by $R_{C_{1},C_{2},C_{3},C_{4}}$. With $C$ a collection, one has:
	\begin{equation}
		\left(C_{1} \oboxtimes T_{\gtimes}(C)\right) \gtimes \left(C_{2}\oboxtimes T_{\gtimes}(C) \right) \simeq \left(C_{1} \gtimes C_{2}\right) \oboxtimes T_{\gtimes}(C).
	\end{equation}
\end{proposition}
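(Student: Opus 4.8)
The plan is to prove both displayed morphisms by unfolding the two tensor products into their explicit bigraded components and reorganizing the resulting direct sums. The first (general) statement will produce a canonical split injection, and the second will be upgraded to an isomorphism using the free-word structure of $T_{\gtimes}(C)$.

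First I would expand the general laxity \eqref{eqn:laxity}. Using Definitions \ref{def:horibicollection} and \ref{def:vertbicollection}, the $(n,m)$-component of the source $\left(C_{1}\oboxtimes C_{2}\right)\gtimes\left(C_{3}\oboxtimes C_{4}\right)$ is the direct sum, over $n_{1}+n_{2}=n$, $m_{1}+m_{2}=m$ and two \emph{independent} intermediate indices $j,j'\geq 0$, of the spaces $C_{1}(n_{1},j)\otimes C_{2}(j,m_{1})\otimes C_{3}(n_{2},j')\otimes C_{4}(j',m_{2})$. Expanding the target $\left(C_{1}\gtimes C_{3}\right)\oboxtimes\left(C_{2}\gtimes C_{4}\right)$ the same way (inserting a single summation index $p$ for the vertical product and then splitting it via both horizontal products) yields the direct sum, over $n_{1}+n_{2}=n$, $m_{1}+m_{2}=m$ and four indices $j,j',q_{1},q_{2}$ subject only to the single constraint $j+j'=q_{1}+q_{2}$, of $C_{1}(n_{1},j)\otimes C_{3}(n_{2},j')\otimes C_{2}(q_{1},m_{1})\otimes C_{4}(q_{2},m_{2})$. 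After reordering tensor factors by the symmetry of $\otimes$ in $\mathrm{Vect}_{\mathbb{C}}$, the source is exactly the \emph{diagonal} sub-summand $q_{1}=j$, $q_{2}=j'$ of the target. Defining $R_{C_{1},C_{2},C_{3},C_{4}}$ to be the inclusion of this summand makes it manifestly $\mathbb{C}$-linear, injective, and natural in all four arguments, which proves the laxity. It is equally transparent here why $R$ fails to be surjective for arbitrary $C_{i}$: the target carries the extra off-diagonal summands with $q_{1}\neq j$ but $q_{1}+q_{2}=j+j'$.

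For the isomorphism I would run the same expansion in the special case where both inner factors of the vertical products are $T_{\gtimes}(C)$, and then factor out the common contributions of $C_{1}$ and $C_{2}$. The claim then collapses to a purely combinatorial identity on the $T_{\gtimes}(C)$-parts: for fixed intermediate gradings, the concatenation (``merge'') of two words of corollas must be a linear bijection onto the appropriate component of a single word of corollas. The key structural input is that $T_{\gtimes}(C)$ is the \emph{free horizontal $\gtimes$-monoid} on the collection $C$, as in \eqref{eqn:wordscollection}: its basis consists of words of corollas, and concatenation is the free product. Hence, once the splitting datum (how many generating corollas land on each side) is fixed, a word decomposes \emph{uniquely} into its prefix/suffix pair because the cut point is then determined. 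This unique factorization makes the merge map a bijection, so $R$ becomes onto and therefore an isomorphism, with naturality inherited from the general case.

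The main obstacle is precisely this surjectivity/unique-factorization step, and within it the identification of the \emph{correct} grading along which to cut. The free structure of $T_{\gtimes}(C)$ guarantees a unique cut only when one splits along the grading that counts whole generating corollas (the grading genuinely additive under $\gtimes$), and one must check that this is indeed the grading forced by the surrounding $\oboxtimes$. Splitting instead along the \emph{arity} grading would ask for prefixes realizing input-counts that need not occur as a partial sum of corolla arities, which would destroy surjectivity; so the crux is to verify that the placement of the $T_{\gtimes}(C)$-factors makes the operative grading the corolla-count grading, ensuring every element of the target is hit exactly once. This is exactly the point at which the hypothesis that the relevant factor has the special free form $T_{\gtimes}(C)$, rather than being an arbitrary bicollection, is indispensable.
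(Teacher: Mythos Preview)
Your approach is correct and essentially the same as the paper's. For the laxity you both exhibit $R$ as the swap of the middle two tensor factors; you phrase it as the inclusion of the diagonal summand $q_{1}=j,\,q_{2}=j'$, while the paper writes it via the braiding $S$ of $(\mathrm{Vect}_{\mathbb C},\otimes)$, but these are the same map. For the isomorphism, the paper resolves exactly the verification you flag by writing down the explicit inverse
\[
\tilde R\big((p^{1}\gtimes p^{2})\oboxtimes(p_{1}\gtimes\cdots\gtimes p_{|p^{1}|+|p^{2}|})\big)=(p^{1}\oboxtimes p_{1}\cdots p_{|p^{1}|})\gtimes(p^{2}\oboxtimes p_{|p^{1}|+1}\cdots p_{|p^{1}|+|p^{2}|}),
\]
which makes transparent that the grading forced by $\oboxtimes$ on the $T_{\gtimes}(C)$ factor is the corolla-count (each corolla of $C$ contributes exactly one output), so the cut is at position $|p^{1}|$ and the unique prefix/suffix factorization you invoke goes through; your sketch is complete once you record this observation.
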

\begin{proof} Let $C_{1},C_{2},C_{3}$ and $C_{4}$ be four bicollections. Let $p^{1},p^{2},p^{3},p^{4}$ be elements of respectively, $C_{1},C_{2},C_{3}$ and $C_{4}$ with the number of outputs of $p^{2}$ matching the number of inputs of $p^{1}$ and the same for $p^{3}$ and $p^{4}$. We denote by $S$ the braiding of the symmetric monoidal category
$(\otimes, \textrm{Vect}_{\mathbb{C}})$ Next, we define $$R_{C_{1},C_{2},C_{3},C_{4}}: C_{1}\oboxtimes C_{2} \gtimes C_{3} \oboxtimes C_{4} \rightarrow C_{1}\oboxtimes C_{3} \gtimes C_{2} \oboxtimes C_{4}$$ by
\begin{align*}
		 & R_{C_{1},C_{2},C_{3},C_{4}} \left((p^{1} \oboxtimes p^{2}) \gtimes (p^{3}\oboxtimes p^{4})\right) = R_{C_{1},C_{2},C_{3},C_{4}} \left((p^{1} \otimes p^{2}) \otimes (p^{3}\otimes p^{4})\right)      \\
		 & \hspace{3cm} =(\textrm{id} \otimes S \otimes \textrm{id})(p^{1}\otimes p^{2} \otimes p^{3} \otimes p^{4}) = p^{1}\otimes p^{3} \otimes p^{2} \otimes p^{4} = (p^{1}\gtimes p^{3})\oboxtimes (p^{2}\gtimes p^{4}).
\end{align*}
	First, it is easy to see that $R_{C_{1},C_{2},C_{3},C_{4}}$, is well defined, and  if extended linearly it becomes a morphism of bicollections. Moreover it is injective. However, it is not surjective. In particular, the image of $R$ is the span of the elements $(p^{1}\gtimes p^{3}) \oboxtimes (p^{2}\gtimes p^{4})$ with a perfect match between the inputs of $p^{3}$ and the outputs of $p^{4}$ on one hand, the inputs of $p^{1}$ and the outputs of $p^{2}$ on the other hand.

	To prove the second assertion, we first notice that $T_{\gtimes}(C)$ is endowed with an unital algebraic structure, given by the concatenation of words, for which $1 \in T_{\gtimes}(C)$ is the unit. We denote by $m: T_{\gtimes}(C)\gtimes T_{\gtimes}(C) \rightarrow T_{\gtimes}(C)$ the algebra map.
	We denote by $q_{1} \cdots q_{s}$ the product of operators $q_{1},\ldots,q_{s}$ in $T_{\gtimes}(C)$. For brevity, we also use the notation $|p|$ for the number of inputs of an operator $p$ in a  bicollection. Define the map
	$$
	\tilde{R}_{C_{1},T_{\gtimes}(C),C_{2},T_{\gtimes}(C)}: (C_{1}\gtimes C_{2}) \oboxtimes T_{\gtimes}(C) \rightarrow \left(C_{1}\oboxtimes T_{\gtimes}(C)\right) \gtimes (C_{2} \oboxtimes T_{\gtimes}(C))
	$$
	by:
	\begin{equation*}
		\tilde{R}_{C_{1},T_{\gtimes}(C),C_{2},T_{\gtimes}(C)}((p^{1} \gtimes p^{2})\oboxtimes (p_{1}\gtimes \cdots \gtimes p_{|p^{1}|+|p^{2}|}))
		= (p^{1} \oboxtimes (p_{1} \cdots p_{|p^{1}|})) \gtimes (p^{2}\oboxtimes(p_{|p^{1}|+1} \cdots p_{|p^{1}|+|p^{2}|})),
	\end{equation*}
	with the convention that if $|p^{1}|=0$ or $|p^{2}|=0$, then we set $p_{1}\cdots p_{|p_{1}|} = 1$, and, respectively, $p_{|p_{1}|+1}\cdots p_{|p_{1}|+|p_{2}|}=1$.
	We should prove first that
	\begin{equation}
		\label{eqn:iso1}
		\tilde{R}_{C_{1},T_{\gtimes}(C),C_{2},T_{\gtimes}(C)} \circ ((\textrm{id}_{C_{1}} \gtimes \textrm{id}_{C_{2}})\oboxtimes m) \circ R_{C_{1},T_{\gtimes}(C),C_{2},T_{\gtimes(C)}} = \textrm{id}.
	\end{equation}
	Notice that $1 \in T_{\gtimes}(C)$ is the unique element in $T_{\gtimes}(C)$ with zero outputs (also the unique one with zero inputs). Assume first that $|p^{1}|, |p^{2}| > 0$. The left hand side of \eqref{eqn:iso1} applied to $$p^{1}\oboxtimes (p_{1}\gtimes \cdots \gtimes p_{|p^{1}|}) \gtimes (p^{2}\oboxtimes (q_{1}\gtimes\cdots\gtimes q_{|p^{2}|}))$$ gives:
	\begin{align*}
		 & \tilde{R}_{C_{1},T_{\gtimes}(C),C_{2},T_{\gtimes}(C)}\left( (p^{1}\gtimes p^{2}) \oboxtimes (p_{1} \gtimes \cdots \gtimes p_{|p^{1}|} \gtimes q_{1}\gtimes\cdots\gtimes q_{|p^{2}|})\right) \\
		 & \hspace{5cm} = p^{1}\oboxtimes (p_{1}\gtimes \cdots \gtimes p_{|p^{1}|}) \gtimes (p^{2}\oboxtimes (q_{1}\gtimes\cdots\gtimes q_{|p^{2}|})).
	\end{align*}
	Now assume that $|p^{1}|=0$. Then, the left hand side of \eqref{eqn:iso1} applied to $(p^{1}\oboxtimes 1) \gtimes (p^{2}\oboxtimes (q_{1}\gtimes\cdots\gtimes q_{|p^{2}|}))$ gives:
	\begin{align*}
		\tilde{R}_{C_{1},T_{\gtimes}(C),C_{2},T_{\gtimes}(C)}\left( (p^{1}\gtimes p^{2}) \oboxtimes (q_{1}\cdots q_{|p^{2}|})\right) = (p^{1} \oboxtimes 1) \gtimes (p^{2} \oboxtimes (q_{1}\cdots q_{|p^{2}|})).
	\end{align*}
	Finally, the same line of thoughts applies to prove that
	\begin{equation}
		\label{eqn:iso2}
		((\textrm{id}_{C_{1}} \gtimes \textrm{id}_{C_{2}})\oboxtimes m) \circ R_{C_{1},T_{\gtimes}(C),C_{2},T_{\gtimes(C)}} \circ \tilde{R}_{C_{1},T_{\gtimes}(C),C_{2},T_{\gtimes}(C)} = \textrm{id}.
	\end{equation}

\end{proof}
The natural transformation $R$ is sometimes called \emph{exchange law} and the relation \eqref{eqn:laxity} is called \emph{middle-four interchange}.
\begin{figure}[!h]
\begingroup%
  \makeatletter%
  \providecommand\color[2][]{%
    \errmessage{(Inkscape) Color is used for the text in Inkscape, but the package 'color.sty' is not loaded}%
    \renewcommand\color[2][]{}%
  }%
  \providecommand\transparent[1]{%
    \errmessage{(Inkscape) Transparency is used (non-zero) for the text in Inkscape, but the package 'transparent.sty' is not loaded}%
    \renewcommand\transparent[1]{}%
  }%
  \providecommand\rotatebox[2]{#2}%
  \newcommand*\fsize{\dimexpr\f@size pt\relax}%
  \newcommand*\lineheight[1]{\fontsize{\fsize}{#1\fsize}\selectfont}%
  \ifx\svgwidth\undefined%
    \setlength{\unitlength}{197.42634012bp}%
    \ifx\svgscale\undefined%
      \relax%
    \else%
      \setlength{\unitlength}{\unitlength * \real{\svgscale}}%
    \fi%
  \else%
    \setlength{\unitlength}{\svgwidth}%
  \fi%
  \global\let\svgwidth\undefined%
  \global\let\svgscale\undefined%
  \makeatother%
  \begin{picture}(1,0.53743362)%
    \lineheight{1}%
    \setlength\tabcolsep{0pt}%
    \put(0,0){\includegraphics[width=\unitlength,page=1]{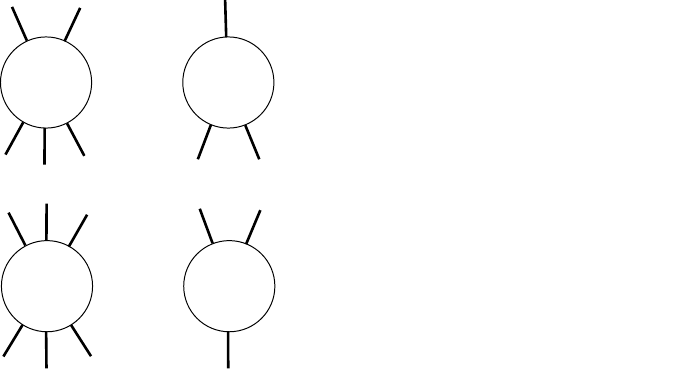}}%
    \put(0.03992777,0.2540053){\color[rgb]{0,0,0}\makebox(0,0)[lt]{\lineheight{1.25}\smash{\begin{tabular}[t]{l}$\oboxtimes$\end{tabular}}}}%
    \put(0.30489998,0.2540053){\color[rgb]{0,0,0}\makebox(0,0)[lt]{\lineheight{1.25}\smash{\begin{tabular}[t]{l}$\oboxtimes$\end{tabular}}}}%
    \put(0.18686314,0.25644744){\color[rgb]{0,0,0}\makebox(0,0)[lt]{\lineheight{1.25}\smash{\begin{tabular}[t]{l}$\gtimes$\end{tabular}}}}%
    \put(0.78084605,0.40053377){\color[rgb]{0,0,0}\makebox(0,0)[lt]{\lineheight{1.25}\smash{\begin{tabular}[t]{l}$\gtimes$\end{tabular}}}}%
    \put(0.77519649,0.10422072){\color[rgb]{0,0,0}\makebox(0,0)[lt]{\lineheight{1.25}\smash{\begin{tabular}[t]{l}$\gtimes$\end{tabular}}}}%
    \put(0.77094185,0.25617605){\color[rgb]{0,0,0}\makebox(0,0)[lt]{\lineheight{1.25}\smash{\begin{tabular}[t]{l}$\oboxtimes$\end{tabular}}}}%
    \put(0,0){\includegraphics[width=\unitlength,page=2]{rmap.pdf}}%
    \put(0.49592966,0.28141148){\color[rgb]{0,0,0}\makebox(0,0)[lt]{\lineheight{1.25}\smash{\begin{tabular}[t]{l}R\end{tabular}}}}%
  \end{picture}%
\endgroup%

	\caption{\label{fig:rmap} Drawing of the action of the natural functor $R$. On the left hand side, the vertical products are taken first between vertically arranged pairs, then we take the horizontal product. On the right hand side, we do the opposite.}
\end{figure}

A remark on the graphical presentation of the exchange law in Figure \ref{fig:rmap}. In \cite{aguiar2010monoidal}, the authors rather than using the symbols $\gtimes$ and $\oboxtimes$ replace them a by a simple straight line to indicate the operation that precede. Other authors follow a different convention and choose to represent by a straight line the last operation. In that case, on the left hand side in Figure \ref{fig:rmap}, the horizontal line of symbol is replaced by a vertical line following that convention, and correspondingly for the right hand side.

The family of morphisms $\{R_{C_{1},C_{2},C_{3},C_{4}}, C_{i} \in \mathrm{Coll}_{2}\}$ define a natural transformation between the two functors $\gtimes \circ \oboxtimes \times \oboxtimes$ and $\oboxtimes \circ \gtimes \times \gtimes$. In fact, pick four morphisms $f_{i}:C_{i}\rightarrow D_{i},~ 1 \leq i \leq 4$, the diagram  in Figure \ref{fig:natr} is a commutative diagram.

\begin{figure}[!h]
	\centering
	\begin{tikzcd}
		(C_{1} \oboxtimes C_{2}) \gtimes (C_{3} \oboxtimes C_{4}) \arrow{r}[yshift=2ex]{(f_{1}\oboxtimes f_{2})\gtimes (f_{3}\oboxtimes f_{4})}\arrow{d}{R_{C_{1},C_{2},C_{3},C_{4}}} & (D_{1} \oboxtimes D_{2}) \gtimes (D_{3} \oboxtimes D_{4}) \arrow{d}{R_{D_{1},D_{2},D_{3},D_{4}}}\\
		(C_{1} \gtimes C_{3}) \oboxtimes (C_{2} \gtimes C_{4}) \arrow{r}[yshift=-2ex,swap]{(f_{1}\gtimes f_{3}) \oboxtimes (f_{2}\gtimes f_{4})} & (D_{1}\gtimes D_{3}) \oboxtimes (D_{2}\gtimes D_{4})
	\end{tikzcd}
	\caption{\label{fig:natr} Naturality of $R$.}
\end{figure}

\subsection{Monoids and comonoids}

We present here general results on categories of algebras in a duoidal category. All proofs can be found in the monograph \cite[Chapt.~6, Sects.~6.5, 6.6]{aguiar2010monoidal}. We define the notion of \raisebox{-1pt}{$\oboxtimes$}$\gtimes$-bialgebra and the notion of \raisebox{-1pt}{$\oboxtimes$}$\gtimes$-Hopf algebras. We use the terminology in reference to the one used in the context of braided symmetric monoidal categories (as explained in the introduction).


\medskip

We denote by $\mathrm{Alg}_{\oboxtimes}$ the category of unital complex associative algebras in the monoidal category $(\mathrm{Coll}_{2},\oboxtimes, \pmb{\mathbb{C}}_{\oboxtimes})$ and $\mathrm{Alg}_{\gtimes}$ the category of complex associative algebras in the monoidal category $(\textrm{Coll}_{2},\gtimes,\pmb{\mathbb{C}}_{\gtimes})$. We write an horizontal, respectively a vertical algebra, as a triplet $(A,m_{\gtimes}^{A}, \eta^{A}_{\gtimes})$, respectively $(A,m_{\oboxtimes}^{A},\eta^{A}_{\oboxtimes})$, with:
\begin{equation*}
	m_{\gtimes}^{A} : A \gtimes A \rightarrow A,~\eta_{\gtimes}^{A} : \pmb{\mathbb{C}_{\gtimes}} \rightarrow A, \quad m_{\oboxtimes}^{A}: A \oboxtimes A \rightarrow A,~ \eta_{\oboxtimes}^{A}:\pmb{\mathbb{C}}_{\oboxtimes} \rightarrow A.
\end{equation*}
Notice that the unit $\pmb{\mathbb{C}}_{\oboxtimes}$ of the vertical tensor product $\oboxtimes$ is an algebra in the monoidal category $(\mathrm{Coll}_{2}, \gtimes, \pmb{\mathbb{C}}_{\gtimes})$:
\begin{equation*}
	m_{\gtimes}^{\pmb{\mathbb{C}}_{\oboxtimes}}: \pmb{\mathbb{C}}_{\oboxtimes} \gtimes \pmb{\mathbb{C}}_{\oboxtimes}\rightarrow \pmb{\mathbb{C}}_{\oboxtimes},~ 1_{n}\gtimes1_{m} \mapsto 1_{n+m},\quad \eta_{\gtimes}^{\pmb{\mathbb{C}}_{\oboxtimes}} : \pmb{\mathbb{C}_{\gtimes}} \rightarrow \pmb{\mathbb{C}}_{\oboxtimes},~ 1_{0} \rightarrow 1_{0}.
\end{equation*}
Likewise for the unit $\pmb{\mathbb{C}}_{\gtimes}$, it is a coalgebra in the monoidal category (Coll$_{2}$, $\oboxtimes, \mathbb{C}_{\oboxtimes}$):
\begin{equation*}
	\Delta^{\oboxtimes}_{\pmb{\mathbb{C}}_{\gtimes}} : \pmb{\mathbb{C}}_{\gtimes} \rightarrow \pmb{\mathbb{C}}_{\gtimes} \oboxtimes \pmb{\mathbb{C}}_{\gtimes},~ 1_{0} \rightarrow 1_{0} \oboxtimes 1_{0},~\varepsilon_{\pmb{\mathbb{C}_{\gtimes}}}^{\oboxtimes} = \eta_{\gtimes}^{\pmb{\mathbb{C}}_{\oboxtimes}}
\end{equation*}
\begin{proposition}[Proposition 6.3.5 in \cite{aguiar2010monoidal}]
	\label{prop:monoids}
	The category $\left(\mathrm{Alg}_{\gtimes},\oboxtimes,\pmb{\mathbb{C}}_{\oboxtimes}\right)$  is a monoidal category. If $(A,m_{\gtimes}^{A},\eta_{A})$ and $(B,m_{\gtimes}^{B},\eta_{B})$ are horizontal algebras, then the horizontal product $m_{\gtimes}^{A\oboxtimes B}$ on $A\oboxtimes B$ is defined by :
	\begin{equation}
		m_{\gtimes}^{A\oboxtimes B} = (m_{\gtimes}^{A} \oboxtimes m_{\gtimes}^{B}) \circ R_{A,B,A,B}.
	\end{equation}
	The category $(\mathrm{coAlg}_{\oboxtimes},\gtimes,\pmb{\mathbb{C}}_{\gtimes})$ is a monoidal category. If $(A,\Delta_{A}^{\oboxtimes})$ and $(B,\Delta_{B}^{\oboxtimes})$ are two vertical co-algebras, then
	\begin{equation*}
		\Delta^{\oboxtimes}_{A \gtimes B} = R_{A,B,A,B} \circ ( \Delta^{\oboxtimes}_{A} \gtimes \Delta_{B}^{\oboxtimes})
	\end{equation*}
	defines a coproduct on $A\gtimes B$.
\end{proposition}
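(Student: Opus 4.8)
The plan is to read both assertions as instances of a single structural fact about duoidal categories: the interchange morphism $R$ of Proposition~\ref{lem:lax} makes the vertical bifunctor $\oboxtimes$ into a \emph{lax} monoidal functor for the horizontal structure $(\gtimes,\pmb{\mathbb{C}}_{\gtimes})$ and, simultaneously, makes $\gtimes$ into an \emph{oplax} monoidal functor for the vertical structure $(\oboxtimes,\pmb{\mathbb{C}}_{\oboxtimes})$. Since a lax monoidal functor carries monoids to monoids and an oplax one carries comonoids to comonoids, both statements reduce to exhibiting the laxity and unit data and checking their coherence. For the first assertion I would, given horizontal algebras $A$ and $B$, set
\begin{equation*}
  m_{\gtimes}^{A\oboxtimes B} = (m_{\gtimes}^{A}\oboxtimes m_{\gtimes}^{B})\circ R_{A,B,A,B},
\end{equation*}
with unit the composite $\pmb{\mathbb{C}}_{\gtimes}\to\pmb{\mathbb{C}}_{\gtimes}\oboxtimes\pmb{\mathbb{C}}_{\gtimes}\xrightarrow{\eta_{\gtimes}^{A}\oboxtimes\eta_{\gtimes}^{B}}A\oboxtimes B$, the first arrow being the unit morphism $E_{\gtimes}\to E_{\gtimes}\oboxtimes E_{\gtimes}$ of Definition~\ref{def:duoidalcategory}.

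Next I would verify that this product is associative and unital. Associativity amounts to the identity
\begin{equation*}
  (m_{\gtimes}^{A}\oboxtimes m_{\gtimes}^{B})\circ R\circ\big((m_{\gtimes}^{A}\oboxtimes m_{\gtimes}^{B})\circ R\,\gtimes\,\mathrm{id}\big) = (m_{\gtimes}^{A}\oboxtimes m_{\gtimes}^{B})\circ R\circ\big(\mathrm{id}\,\gtimes\,(m_{\gtimes}^{A}\oboxtimes m_{\gtimes}^{B})\circ R\big),
\end{equation*}
which I would prove by chasing a generic element $(p^{1}\oboxtimes p^{2})\gtimes(p^{3}\oboxtimes p^{4})$: because $R$ is, by the explicit formula in the proof of Proposition~\ref{lem:lax}, nothing but the braiding $S$ of $(\mathrm{Vect}_{\mathbb{C}},\otimes)$ applied to the two middle tensor factors, the identity follows from the hexagon axioms for $S$ together with the associativity of $m_{\gtimes}^{A}$ and $m_{\gtimes}^{B}$; unitality is the same computation on the degenerate $|p^{i}|=0$ summands, handled as in that proof. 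To promote $(\mathrm{Alg}_{\gtimes},\oboxtimes,\pmb{\mathbb{C}}_{\oboxtimes})$ to a monoidal category I would then invoke that $\pmb{\mathbb{C}}_{\oboxtimes}$ is itself a horizontal algebra (as displayed just before the statement) and check that the associator and unitors of $\oboxtimes$ on $\mathrm{Coll}_{2}$ are morphisms of horizontal algebras; this last point is exactly the requirement in Definition~\ref{def:duoidalcategory} that the coherence constraints of $\oboxtimes$ be monoidal natural transformations for $\gtimes$.

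The second assertion is obtained by dualizing, reversing every arrow. For vertical coalgebras $(A,\Delta_{A}^{\oboxtimes},\varepsilon_{A})$ and $(B,\Delta_{B}^{\oboxtimes},\varepsilon_{B})$ I would set $\Delta_{A\gtimes B}^{\oboxtimes}=R_{A,B,A,B}\circ(\Delta_{A}^{\oboxtimes}\gtimes\Delta_{B}^{\oboxtimes})$ and take as counit the composite $A\gtimes B\xrightarrow{\varepsilon_{A}\gtimes\varepsilon_{B}}\pmb{\mathbb{C}}_{\oboxtimes}\gtimes\pmb{\mathbb{C}}_{\oboxtimes}\to\pmb{\mathbb{C}}_{\oboxtimes}$, the last arrow being the morphism $E_{\oboxtimes}\gtimes E_{\oboxtimes}\to E_{\oboxtimes}$ of Definition~\ref{def:duoidalcategory}. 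Coassociativity follows from the coassociativity of $\Delta_{A}^{\oboxtimes}$ and $\Delta_{B}^{\oboxtimes}$ and the very same coherence of $R$ read in the opposite direction, while $\pmb{\mathbb{C}}_{\gtimes}$ serves as monoidal unit since it carries the $\oboxtimes$-coalgebra structure exhibited before the statement, and the associator and unitors of $\gtimes$ are $\oboxtimes$-coalgebra morphisms by the dual coherence clause.

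I expect the only genuinely non-formal work to be the verification of the coherence (associativity) axiom for $R$ on bicollections, i.e.\ that the two triple interchanges agree and interact correctly with the three unit morphisms. Since $R$ is explicitly the middle braiding $S$, this reduces to the hexagon identities for $S$ in $(\mathrm{Vect}_{\mathbb{C}},\otimes)$ together with careful bookkeeping of the direct-sum decompositions indexing $\gtimes$ and $\oboxtimes$ and of the degenerate zero-input and zero-output summands. This bookkeeping, rather than any conceptual difficulty, should be the main effort, just as in the proof of Proposition~\ref{lem:lax}.
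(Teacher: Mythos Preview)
The paper does not provide its own proof of this proposition: it is stated with attribution to Proposition~6.3.5 of \cite{aguiar2010monoidal}, and the paragraph introducing the subsection explicitly says ``All proofs can be found in the monograph \cite[Chapt.~6, Sects.~6.5, 6.6]{aguiar2010monoidal}.'' So there is nothing to compare against in the paper itself.

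Your sketch is the standard argument and is essentially what one finds in Aguiar--Mahajan. The conceptual framing---that a lax monoidal functor preserves monoids and an oplax one preserves comonoids, with $R$ supplying the laxity---is exactly right, and your explicit verification via the middle-braiding description of $R$ from Proposition~\ref{lem:lax} is a legitimate way to check the coherence axioms concretely in $\mathrm{Coll}_{2}$. One small point: you correctly note that the associator and unitors of $\oboxtimes$ must be horizontal algebra morphisms, but you might emphasize that on bicollections these constraints are identities (both $\gtimes$ and $\oboxtimes$ are strict), so this verification is trivial here even though it is a genuine condition in a general duoidal category.
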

\label{ex:endomorphismproperad}

Following \cite{aguiar2010monoidal}, a coalgebra in the category (Alg$_{\gtimes}$,$\oboxtimes, \mathbb{C}_{\oboxtimes}$) is called a \emph{bimonoid}, while an algebra in the same monoidal category is called a \emph{double monoid}.

\begin{definition}(PROS)
	\label{PROS}
	We call a PROS an algebra in the monoidal category (Alg$_{\gtimes}$, $\oboxtimes, \mathbb{C}_{\oboxtimes}$). Otherwise stated a PROS is a tuple $(C,\nabla,m_{\gtimes}^{C},\eta^{C}_{\gtimes},\eta^{C}_{\oboxtimes})$
	with
	$$C\in \mathrm{Coll}_{2},~~\nabla:C\oboxtimes C\rightarrow C,~~m_{\gtimes}^{C}:C\gtimes C\rightarrow C,~~\eta_{\gtimes}^{C}:\pmb{\mathbb{C}}_{\gtimes}\rightarrow C,~~\eta_{\oboxtimes}^{C}:\pmb{\mathbb{C}}_{\oboxtimes}\rightarrow C $$
	with $\nabla$ and $\eta_{\oboxtimes}^{C}$ two horizontal algebra morphisms and $\eta^{C}_{\gtimes}  = \eta_{\oboxtimes}^{C} \circ \eta_{\gtimes}^{\mathbb{C}_{\oboxtimes}}$.
\end{definition}

We proceed with a fundamental example of a set of bicollections endowed with an horizontal and a vertical product. Let $C$ be a collection. First, there exists a canonical isomorphism of bicollections $$\phi: T_{\gtimes} (C \sbt C) \longrightarrow T_{\gtimes}(C)\oboxtimes T_{\gtimes}(C),$$
defined by
\begin{align*}
	 & \phi\bigg(\left[p^{1} \sbt (q^{1}_{1} \gtimes \cdots \gtimes q^{1}_{|p^{1}|})\right] \gtimes \cdots \gtimes  \left[p^{n} \sbt (q^{n}_{1} \gtimes \cdots \gtimes q^{n}_{|p^{n}|})\right] \bigg) = (p^{1} \gtimes \cdots \gtimes p^{n}) \oboxtimes (q^{1}_{1} \gtimes \cdots \gtimes q^{n}_{|p^{n}|}) \\
	 & \phi(1) = 1\oboxtimes 1.
\end{align*}
In fact, from the second assertion of Lemma \ref{lem:lax} with $C_{1}=C_{2}=C$, there exists an isomorphism of bicollections:
\begin{equation*}
	\bigoplus_{n \geq 1}\left(C \oboxtimes T_{\gtimes}(C)\right)^{\gtimes n} \simeq \bigoplus_{n\geq 1} C^{\gtimes n} \oboxtimes T_{\gtimes}(C)
\end{equation*}
with $T^{+}_{\gtimes}(C)(n,m) = T_{\gtimes}(C)$, $n,m\geq 1$ and $T^{+}(C)(0,0) = \{0\}$, since $C_{0} = \{0\}$, we have in fact
\begin{equation*}
	\bigoplus_{n \geq 1}\left(C \oboxtimes T^{+}_{\gtimes}(C)\right)^{\gtimes n} \simeq \bigoplus_{n\geq 1} C^{\gtimes n} \oboxtimes T^{+}_{\gtimes}(C)
\end{equation*}
which implies in turn:
\begin{equation*}
	T_{\gtimes}(C\sbt C) = \mathbb{C}1 \oplus \bigoplus_{n\geq 1} (C\oboxtimes T^{+}_{\gtimes}(C)) \simeq \mathbb{C}(1 \oboxtimes 1) \oplus \bigoplus_{n\geq 1} C^{\gtimes n} \oboxtimes T^{+}_{\gtimes}(C) \simeq T_{\gtimes}(C) \oboxtimes T_{\gtimes}(C).
\end{equation*}
As a consequence, the tensor product $T_{\gtimes}(C) \oboxtimes T_{\gtimes}(C)$ is endowed with an algebra product obtained by pushing forward using $\phi$ the concatenation product on $T_{\gtimes}(P)$.
Assume next that $\mathcal{P}$ is endowed with an operadic composition $\rho : P \sbt P \rightarrow P$. Then $\rho$ induces an \emph{horizontal morphism} (for the concatenation) denoted $T_{\gtimes}(\rho):T_{\gtimes}(P \sbt P)  \rightarrow  T_{\gtimes}(P)$ which equals $\rho$ on $P\sbt P \subset T_{\gtimes}(\mathcal{P}) \oboxtimes T_{\gtimes}(\mathcal{P})$.

\begin{proposition}
	A bicollection $C$ is a coalgebra in the monoidal category $(\mathrm{Alg}_{\gtimes}, \oboxtimes,\pmb{\mathbb{C}}_{\oboxtimes})$ if and only if it is an algebra in the monoidal category
	$(\mathrm{coAlg}_{\oboxtimes}, \gtimes, \pmb{\mathbb{C}}_{\gtimes})$.
\end{proposition}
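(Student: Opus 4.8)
The plan is to unwind both categorical notions and observe that they are supported by the very same data on $C$, subject to the very same axioms; the equivalence then reduces to recognising that the two mixed compatibility conditions are literally one and the same equation. Concretely, a coalgebra in $(\mathrm{Alg}_{\gtimes}, \oboxtimes, \pmb{\mathbb{C}}_{\oboxtimes})$ consists of a horizontal algebra $(C, m_{\gtimes}, \eta_{\gtimes})$ together with maps $\Delta^{\oboxtimes}\colon C \to C\oboxtimes C$ and $\varepsilon^{\oboxtimes}\colon C \to \pmb{\mathbb{C}}_{\oboxtimes}$ that are \emph{horizontal algebra morphisms} and satisfy coassociativity and the counit law in $\mathrm{Alg}_{\gtimes}$. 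Dually, an algebra in $(\mathrm{coAlg}_{\oboxtimes}, \gtimes, \pmb{\mathbb{C}}_{\gtimes})$ consists of a vertical coalgebra $(C, \Delta^{\oboxtimes}, \varepsilon^{\oboxtimes})$ together with maps $m_{\gtimes}\colon C\gtimes C \to C$ and $\eta_{\gtimes}\colon \pmb{\mathbb{C}}_{\gtimes} \to C$ that are \emph{vertical coalgebra morphisms} and satisfy associativity and the unit law in $\mathrm{coAlg}_{\oboxtimes}$. In both descriptions the underlying data is the single quadruple $(m_{\gtimes}, \eta_{\gtimes}, \Delta^{\oboxtimes}, \varepsilon^{\oboxtimes})$, so the task is to match the axioms.

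First I would record that the forgetful functors $\mathrm{Alg}_{\gtimes} \to \mathrm{Coll}_{2}$ and $\mathrm{coAlg}_{\oboxtimes}\to \mathrm{Coll}_{2}$ are faithful and strict monoidal for the respective structures of Proposition \ref{prop:monoids}. Consequently the associativity and unit axioms for $m_{\gtimes}$, and the coassociativity and counit axioms for $\Delta^{\oboxtimes}$, are detected already in $\mathrm{Coll}_{2}$ and hence appear identically in both descriptions. The content is thus concentrated in the remaining mixed axiom. Here the crucial point, again from Proposition \ref{prop:monoids}, is that the horizontal multiplication on $C\oboxtimes C$ is $m_{\gtimes}^{C\oboxtimes C} = (m_{\gtimes}\oboxtimes m_{\gtimes})\circ R_{C,C,C,C}$, while the vertical comultiplication on $C\gtimes C$ is $\Delta^{\oboxtimes}_{C\gtimes C} = R_{C,C,C,C}\circ(\Delta^{\oboxtimes}\gtimes\Delta^{\oboxtimes})$, both built from the one exchange morphism $R_{C,C,C,C}$ of the laxity \eqref{eqn:laxity}. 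Spelling out that $\Delta^{\oboxtimes}$ is a horizontal algebra morphism, and separately that $m_{\gtimes}$ is a vertical coalgebra morphism, one finds that each assertion is exactly the relation
\begin{equation*}
  \Delta^{\oboxtimes}\circ m_{\gtimes} = (m_{\gtimes}\oboxtimes m_{\gtimes})\circ R_{C,C,C,C}\circ(\Delta^{\oboxtimes}\gtimes\Delta^{\oboxtimes}),
\end{equation*}
so the two mixed compatibilities coincide. It then remains to match the unit-level compatibilities: I would check that ``$\varepsilon^{\oboxtimes}$ and $\eta_{\gtimes}$ respect the relevant structures'' unwinds, on both sides, to the same three identities governed by the unit comparison morphisms $E_{\gtimes}\to E_{\gtimes}\oboxtimes E_{\gtimes}$, $E_{\oboxtimes}\gtimes E_{\oboxtimes}\to E_{\oboxtimes}$ and $E_{\gtimes}\to E_{\oboxtimes}$ of Definition \ref{def:duoidalcategory}, together with the nullary relation $\varepsilon^{\oboxtimes}\circ\eta_{\gtimes}$. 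Granting these identifications, each of the two structures is precisely the data of a bimonoid on $C$ in $\mathrm{Coll}_{2}$, and the equivalence follows.

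The main obstacle is organisational rather than conceptual: one must be scrupulous about which unit object indexes which structure (the horizontal unit of $C\oboxtimes C$ versus the vertical counit of $C\gtimes C$, and so on), and verify that the three unit comparison morphisms enter the two families of unit/counit axioms symmetrically. This is precisely where the asymmetry between $\gtimes$ (symmetric, unit $\pmb{\mathbb{C}}_{\gtimes}$) and $\oboxtimes$ (non-symmetric, unit $\pmb{\mathbb{C}}_{\oboxtimes}$) has to be handled with care. Once the correct bookkeeping of the unit laws is in place, the equivalence is a direct consequence of the symmetry of the displayed bimonoid relation in $R_{C,C,C,C}$ together with the faithfulness of the two forgetful functors.
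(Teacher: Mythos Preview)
Your proposal is correct and follows the same approach as the paper: the key observation in both is that the mixed compatibility condition (that $\Delta^{\oboxtimes}$ be a $\gtimes$-algebra morphism, respectively that $m_{\gtimes}$ be a $\oboxtimes$-coalgebra morphism) reduces in either reading to the single identity $\Delta^{\oboxtimes}\circ m_{\gtimes} = (m_{\gtimes}\oboxtimes m_{\gtimes})\circ R_{C,C,C,C}\circ(\Delta^{\oboxtimes}\gtimes\Delta^{\oboxtimes})$. Your treatment is in fact more complete than the paper's, which displays only this main diagram and leaves the unit/counit compatibilities and the (co)associativity bookkeeping implicit.
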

\begin{proof}
	The two diagrams expressing compatibility between the multiplication map $m_{\gtimes}^{C}$ and the co-product $\Delta_{C}^{\oboxtimes}$ (stating either that $m^{C}_{\gtimes}$ is a $\Delta_{C}^{\oboxtimes}$ morphism or that $ \Delta_{C}^{\oboxtimes}$ is a $m^{C}_{\gtimes}$ algebra morphism) are both equal to the diagram in Fig. \ref{fig:compatmulticoprod}.
	\begin{figure}[!h]
		\begin{tikzcd}
			C \gtimes C \arrow{dd}{m^{\gtimes}} \arrow{r}{\Delta_{C}^{\oboxtimes}\gtimes\Delta_{C}^{\oboxtimes}} & (C\oboxtimes C) \gtimes (C\oboxtimes C) \arrow{d}{R_{C,C,C,C}} \\
			& (C\gtimes C) \oboxtimes (C\gtimes C) \arrow{d}{m^{C}_{\gtimes}\oboxtimes m^{C}_{\gtimes}} \\
			C \arrow{r}{\Delta} & C \oboxtimes C
		\end{tikzcd}
		\caption{\label{fig:compatmulticoprod} Compatibility between the multiplication and comultiplication for $\boxdtimes\gtimes$-bialgebras.}
	\end{figure}
\end{proof}
We use the terminology \raisebox{-1pt}{$\oboxtimes$}$\gtimes$-bialgebras for coalgebra in the monoidal category $(\mathrm{Alg}_{\gtimes},\oboxtimes,\pmb{\mathbb{C}}_{\oboxtimes})$ or for algebra in the category $(\mathrm{coAlg}_{\oboxtimes},\gtimes,\pmb{\mathbb{C}}_{\gtimes})$ (that is for bimonoids). The following definition we be important in the next section, in which we define unshuffle coalgebras and bialgebras in the context of duoidal categories.
\begin{definition}[Co-nilpotent $\boxdtimes\gtimes$-bialgebras]
	A $\boxdtimes\gtimes$-bialgebras $(C,\bar{\Delta},\varepsilon)$ is said  \emph{co-nilpotent} if
	\begin{enumerate}[\indent 1.]
		\item $C(n,n)=\pmb{\mathbb{C}}(n,n)=\mathbb{C}1_{n},~ n \geq 0$ and $C(0,m) = C(m,0) = 0$, $m \geq 1$.
		\item $\bar{\Delta}(c) = \Delta(c) + 1_{m} \oboxtimes c + c \oboxtimes 1_{n},~ c \in C(n,m),~n\neq m$, with
		      \begin{equation*}
			      \bar{\Delta} : C \rightarrow C\oboxtimes C,~ \Delta(C(n,m)) \subset \bigoplus_{\substack{k\geq 0 \\ k\neq n,m}} C(n,k) \gtimes C(k,m),
		      \end{equation*}
		      and $\Delta(1_{n}) = 1_{n} \oboxtimes 1_{n}, n \geq 0$.
		\item $\Delta$ is point-wise nilpotent: for each $c \in C$, there exists an integer $n\geq 0$ such that $\Delta^{n}(c) = 0$.
	\end{enumerate}
\end{definition}
\subsection{\raisebox{-0.75pt}{$\oboxtimes$}$\gtimes$-Hopf algebras and the monoid of horizontal morphisms}
If $A$ is a PROS, we use the notation $\nabla_{A}$ (or $\nabla$ if there are no risks of confusion) instead of $m_{\oboxtimes}^{A}$ for its vertical product. We do the same notational simplifications for \raisebox{-1pt}{$\oboxtimes$}$\gtimes$-bialgebras.

\begin{definition}[\raisebox{-0.75pt}{$\oboxtimes$}$\gtimes$-Hopf algebras]
	A bicollection \raisebox{-0.75pt}{$\oboxtimes$}$\gtimes$-Hopf algebra is a tuple $$(C,\Delta,\nabla,m_{\gtimes}^{C},S,\varepsilon,\eta_{\gtimes}^{C}, \eta_{\oboxtimes}^{C}),$$
	\begin{enumerate}[\indent 1.]
		\item $(C,\Delta,m_{\gtimes}^{C},\varepsilon,\eta_{\gtimes}^{C})$ is a $\oboxtimes$$\gtimes$-bialgebra,
		\item $(C,\nabla,m_{\gtimes}^{C},\eta_{\gtimes}^{C}, \eta_{\oboxtimes}^{C})$ is a PROS,
		\item A morphism $S:C \rightarrow C$ of horizontal algebras such that
		      \begin{align}
			      \label{eqn:antipode}
			       & \nabla \circ (S \oboxtimes \textrm{id}_{C} )\circ \Delta = \nabla \circ (\textrm{id}_{C} \oboxtimes S)  \circ \Delta = \eta \circ \varepsilon.
		      \end{align}
	\end{enumerate}
	The $\boxdtimes\gtimes$-Hopf algebra is said $\emph{connected}$ if $\bigoplus_{n\geq0}C(n,n) \overset{\eta}{\simeq} \pmb{\mathbb{C}}_{\oboxtimes}$.
\end{definition}

\begin{remarque}
	The map $S$ is called an \emph{antipode}. In the definition of a $\boxdtimes\gtimes$-Hopf algebra we do not assume any compatibility conditions between $\Delta$ and $\nabla$. These two morphisms are algebra morphisms with respect to the horizontal algebraic product structure we have on the underlying bicollection, but nothing more. In particular, we can not require for $\Delta$ to be $\nabla$ morphisms, this stems from the fact that $C\oboxtimes C$ is \emph{not} an $\oboxtimes$-algebra, even if $C$ is.

	The map $S$ does not enjoy the same properties as the antipodal map of a plain usual commutative Hopf algebra. In particular, it is not a morphism with respect to the product $\nabla$, nor an anti-comorphism with respect to $\Delta$ nor an unipotent morphism $(S^{2}=\textrm{id})$. We shall see later that in the case of the $\boxdtimes\gtimes$-Hopf algebra canonically associated with the gap-insertion operad, the square of the antipode is in fact a projector.

	Here again a remark on the terminology we use is in order. According to \cite{aguiar2010monoidal}, a \raisebox{-1pt}{$\oboxtimes$}$\gtimes$-Hopf algebra is a bimonoid and a dimonoid endowed with an extra map $S$. Defining the notion of Hopf monoid in a duoidal category is an highly non-trivial task and the various --equivalent-- definitions of an Hopf algebra can lead to different notions of Hopf monoids in a duoidal category. Often, since for a bimonoid (a \raisebox{-1pt}{$\oboxtimes$}$\gtimes$-bialgebra) the comonoidal and the monoidal structures are in different monoidal categories the notion of convolution monoid associated with a bimonoid is meaningless. Therefore, the notion of Hopf monoids in a duoidal category can not be defined using \eqref{eqn:antipode}. See \cite{bohm2013hopf} for a detailed discussion on the different possibilities to define Hopf monoids in a duoidal category.
\end{remarque}

\begin{definition}[Convolution product]
	\label{def:convprod}
	Let $(A, \nabla, m_{\gtimes}^{A}, \eta_{\gtimes}^{A}, \eta_{\oboxtimes}^{A})$ be a PROS and $(B,\Delta,\varepsilon,m_{\gtimes}^{B},\eta_{\gtimes}^{B}, \eta_{\oboxtimes}^{B})$ be a \raisebox{-1pt}{$\oboxtimes$}$\gtimes$ bialgebras. Let $\alpha, \beta : B \rightarrow A$ be two bicollections morphisms. We define the \emph{convolution product} $\alpha \star \beta$ by
	\begin{equation*}
		\alpha \star \beta = \nabla \circ (\alpha \oboxtimes \beta) \circ \Delta.
	\end{equation*}
	The unit for this convolution product $\star$ is $\varepsilon \circ \eta$.
\end{definition}

\begin{proposition}
	%
	%
	%
	The class of horizontal algebra morphisms
	$\mathrm{Hom}_{\mathrm{Alg}_{\gtimes}}(B, A)$ is a monoid for $\star$.

	Besides, assume that $B$ is a \raisebox{-1pt}{$\oboxtimes$}$\gtimes$- Hopf algebra and that $\alpha$ is a PROS morphism $\alpha$. Then $\alpha$ is invertible in the monoid of horizontal algebra morphisms and $\alpha^{-1}=\alpha \circ S$.
\end{proposition}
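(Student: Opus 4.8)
The plan is to verify the three monoid axioms for $\star$ on $\mathrm{Hom}_{\mathrm{Alg}_{\gtimes}}(B,A)$ and then to exhibit $\alpha\circ S$ as a two-sided $\star$-inverse of a PROS morphism $\alpha$. For \textbf{closure}, given $\alpha,\beta \in \mathrm{Hom}_{\mathrm{Alg}_{\gtimes}}(B,A)$, I would read off $\alpha\star\beta = \nabla \circ (\alpha \oboxtimes \beta)\circ \Delta$ as a composite of three horizontal algebra morphisms: the coproduct $\Delta\colon B \to B\oboxtimes B$ is horizontal because $B$ is a $\oboxtimes\gtimes$-bialgebra, i.e.\ a bimonoid, which is exactly the compatibility encoded in Figure \ref{fig:compatmulticoprod}; the map $\alpha\oboxtimes\beta$ is horizontal by the bifunctoriality of $\oboxtimes$ on $\mathrm{Alg}_{\gtimes}$ from Proposition \ref{prop:monoids}; and $\nabla\colon A\oboxtimes A \to A$ is horizontal by the PROS axioms of Definition \ref{PROS}. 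Here the horizontal products on $A\oboxtimes A$ and $B\oboxtimes B$ are precisely the ones built from the exchange law $R$ in Proposition \ref{prop:monoids}, so it is the laxity \eqref{eqn:laxity} that makes these composites interact correctly; consequently $\alpha\star\beta$ again lies in $\mathrm{Hom}_{\mathrm{Alg}_{\gtimes}}(B,A)$.

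For \textbf{associativity} and \textbf{unitality} I would run the classical convolution argument, now internal to the (strict) duoidal category. Expanding $(\alpha\star\beta)\star\gamma$ and $\alpha\star(\beta\star\gamma)$ and rewriting with the interchange law of the bifunctor $\oboxtimes$, the two sides are identified using coassociativity of $\Delta$ together with associativity of $\nabla$. For the unit $\eta\circ\varepsilon$, I would factor $\alpha \oboxtimes(\eta\circ\varepsilon) = (\alpha\oboxtimes\eta)\circ(\mathrm{id}_{B}\oboxtimes\varepsilon)$ and invoke the counit axiom $(\mathrm{id}_{B}\oboxtimes\varepsilon)\circ\Delta = \mathrm{id}_{B}$ together with the right-unit axiom $\nabla\circ(\mathrm{id}_{A}\oboxtimes\eta^{A}_{\oboxtimes}) = \mathrm{id}_{A}$ to obtain $\alpha\star(\eta\circ\varepsilon)=\alpha$; the left identity is symmetric. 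This establishes the monoid structure.

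For the \textbf{second assertion}, let $B$ be a $\oboxtimes\gtimes$-Hopf algebra and $\alpha$ a PROS morphism. First, $\alpha\circ S$ belongs to the monoid, since $S$ is a horizontal morphism by the Hopf axioms and $\alpha$, being a PROS morphism, is in particular horizontal. The key computation is
\[
\alpha\star(\alpha\circ S) = \nabla_{A}\circ(\alpha\oboxtimes\alpha)\circ(\mathrm{id}_{B}\oboxtimes S)\circ\Delta = \alpha\circ\nabla_{B}\circ(\mathrm{id}_{B}\oboxtimes S)\circ\Delta = \alpha\circ\eta^{B}_{\oboxtimes}\circ\varepsilon = \eta^{A}_{\oboxtimes}\circ\varepsilon,
\]
where the second equality uses that $\alpha$ is a PROS morphism (so $\nabla_{A}\circ(\alpha\oboxtimes\alpha)=\alpha\circ\nabla_{B}$), the third uses the antipode identity \eqref{eqn:antipode}, and the last uses that $\alpha$ respects the vertical unit, $\alpha\circ\eta^{B}_{\oboxtimes}=\eta^{A}_{\oboxtimes}$, yielding the convolution unit $\eta\circ\varepsilon$. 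The reverse composite $(\alpha\circ S)\star\alpha=\eta\circ\varepsilon$ follows identically from the other half of \eqref{eqn:antipode}, whence $\alpha^{-1}=\alpha\circ S$.

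The \textbf{main obstacle} is conceptual rather than computational: one must ensure that every arrow in sight is an honest horizontal algebra morphism, so that the whole argument genuinely takes place inside the monoid. This is where the duoidal structure (the exchange law $R$ and the laxity \eqref{eqn:laxity}) does the work, and it is the reason the second statement requires $\alpha$ to be a PROS morphism rather than merely a horizontal algebra morphism: only the former intertwines the vertical product $\nabla$ and vertical unit $\eta_{\oboxtimes}$, and without that intertwining one cannot pull $\alpha$ across the antipode identity \eqref{eqn:antipode}.
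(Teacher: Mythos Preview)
Your argument is correct and follows essentially the same route as the paper: closure via the monoidal structure on $\mathrm{Alg}_{\gtimes}$ (Proposition~\ref{prop:monoids}) so that $\alpha\oboxtimes\beta$, $\Delta$, and $\nabla$ are all horizontal morphisms, and invertibility by pulling $\alpha$ through $\nabla$ using the PROS morphism property and then applying the antipode identity~\eqref{eqn:antipode}. You are more explicit than the paper in spelling out associativity, unitality, and the step $\alpha\circ\eta^{B}_{\oboxtimes}=\eta^{A}_{\oboxtimes}$, but the underlying strategy is identical.
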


\begin{proof}
 Since $(\mathrm{Alg}_{\gtimes}, \oboxtimes, \pmb{\mathbb{C}}_{\oboxtimes})$ is a monoidal categories, $\alpha \oboxtimes \beta$ is a $\gtimes$-algebra morphism. Then, $\nabla \circ (\alpha \oboxtimes \beta) \circ \Delta$ is a $\gtimes$-algebra morphism, as a composition of horizontal algebra morphisms. Finally, if $\alpha$ is a PROS morphism, we get
	\begin{align*}
		\nabla \circ ((\alpha \circ S) \oboxtimes \alpha) \circ \Delta
		 & = \nabla \circ (\alpha \oboxtimes \alpha) \circ (S \oboxtimes \textrm{id}) \circ \Delta = \alpha \circ \nabla \circ (S \oboxtimes \textrm{id}) \circ \Delta = \eta \circ \epsilon.
	\end{align*}
\end{proof}

We now show that to the gap-insertion operad $\mathcal{N}\mathcal{C}$ is associated a \raisebox{-0.75pt}{$\oboxtimes$}$\gtimes$-Hopf algebra. As previously explained, the map $\rho_{\mathcal{N}\mathcal{C}}$ extends to an horizontal morphism $T_{\gtimes}(\mathcal{N}\mathcal{C})$ defining a PROS, denoted $\nabla^{T_{\gtimes}(\mathcal{N}\mathcal{C})}$ on $T_{\gtimes}(\mathcal{N}\mathcal{C})$.
The graded dual $ \Delta^{T_{\gtimes}(\mathcal{N}\mathcal{C})}
$ of $\nabla_{T_{\gtimes}(\mathcal{N}\mathcal{C})}$ which reads on a non-crossing partition $\pi$:
\begin{equation}
	\Delta^{T_{\gtimes}(\mathcal{N}\mathcal{C})}(\pi) = \sum_{\substack{\alpha,\beta \in T_{\gtimes}(\mathcal{N}\mathcal{C}) \\ \nabla_{T_{\gtimes}(\mathcal{N}\mathcal{C})}(\alpha \oboxtimes \beta)=\pi}} \alpha \oboxtimes \beta,
\end{equation}
is an horizontal algebra morphism. If $\pi$ is a non-crossing partition, we denote by $\sharp \pi$ the number of non-empty blocks of $\pi$. Define then the algebra morphism $S:T_{\gtimes}(\mathcal{N}\mathcal{C})\rightarrow T_{\gtimes}(\mathcal{N}\mathcal{C})$ by
\begin{equation}
	S(I) = (-1)^{\sharp I}I,\textrm{ if } I\in\textrm{Int} \textrm{ and } S(\pi) = 0 \textrm{ otherwise }.
\end{equation}
Define the counit $\varepsilon : T_{\gtimes}(\mathcal{N}\mathcal{C})\rightarrow \pmb{\mathbb{C}}_{\oboxtimes}$ as the unique horizontal morphism such that $\varepsilon(\{\emptyset\}) = 1_{1}$ and $\varepsilon(\pi) = 0$ otherwise. Define also $\eta : \pmb{\mathbb{C}}_{\oboxtimes} \rightarrow T_{\gtimes}(\mathcal{N}\mathcal{C}) $ by $\eta(1_{n}) = \{\emptyset\}^{n}$ for each integer $n\geq 0$.
\begin{proposition}
	\label{prop:ncunshuffle}
	$(T_{\gtimes}(\mathcal{N}\mathcal{C}), \Delta^{T_{\gtimes}(\mathcal{N}\mathcal{C})}, \nabla_{T_{\gtimes}(\mathcal{N}\mathcal{C})}, S, \varepsilon, \eta)$ is a co-nilpotent \raisebox{-1pt}{$\oboxtimes$}$\gtimes$-Hopf algebra.
\end{proposition}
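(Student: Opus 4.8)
The plan is to verify, one layer at a time, the data entering the definition of a co-nilpotent $\oboxtimes\gtimes$-Hopf algebra: the PROS $(\nabla,m_{\gtimes})$, the $\oboxtimes\gtimes$-bialgebra $(\Delta,m_{\gtimes})$, co-nilpotency of $\Delta$, and finally the antipode $S$. The horizontal product $m_{\gtimes}$ (concatenation of words) together with the vertical product $\nabla=\nabla_{T_{\gtimes}(\mathcal{N}\mathcal{C})}$ extending $\rho_{\mathcal{N}\mathcal{C}}$ is already a PROS, as recorded just before the statement: associativity of $\nabla$ is associativity of gap-insertion, and the fact that $\nabla$ is a morphism of horizontal algebras is the middle-four interchange of Proposition \ref{lem:lax}. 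For the bialgebra I would use that each component $T_{\gtimes}(\mathcal{N}\mathcal{C})(n,m)$ is finite-dimensional and that $\nabla$ sends a basis word $\alpha\oboxtimes\beta$ to a single basis partition, so that $\Delta=\Delta^{T_{\gtimes}(\mathcal{N}\mathcal{C})}$ is literally the transpose of $\nabla$; coassociativity and counitality of $\Delta$ are then the transposes of associativity and unitality of $\nabla$. The remaining bialgebra axiom, namely that $\Delta$ is a morphism of horizontal algebras (the square of Figure \ref{fig:compatmulticoprod}), follows from the PROS property that $\nabla$ respects the horizontal splitting (Proposition \ref{prop:monoids}): a vertical factorization of a concatenation $\pi_{1}\gtimes\pi_{2}$ is the same datum as a pair of vertical factorizations of $\pi_{1}$ and of $\pi_{2}$ interleaved by $R$, which is exactly the multiplicativity of $\Delta$ for $m_{\gtimes}$.

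Next I would check the three co-nilpotency conditions. Give each operator the degree $d$ equal to its number of inputs minus its number of outputs; a single non-crossing partition of an $\ell$-element set is a letter with $\ell+1$ inputs and one output, hence of degree $\ell\geq 0$, with $d=0$ only for the unit $\{\emptyset\}$. Since degree is additive for $\gtimes$ and preserved by $\oboxtimes$, counting inputs and outputs gives $T_{\gtimes}(\mathcal{N}\mathcal{C})(n,n)=\mathbb{C}\{\emptyset\}^{\gtimes n}=\mathbb{C}1_{n}$ and $T_{\gtimes}(\mathcal{N}\mathcal{C})(0,m)=T_{\gtimes}(\mathcal{N}\mathcal{C})(m,0)=0$ for $m\geq 1$, which is condition (1). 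In the sum defining $\Delta(\pi)$ the two factorizations with all-unit inner word or unit outer partition produce the primitive-type terms $1\oboxtimes\pi+\pi\oboxtimes 1$, and since $\alpha\in C(n,k)$, $\beta\in C(k,m)$ force $m\leq k\leq n$, every other factorization has $m<k<n$; this is condition (2), and it shows the reduced coproduct strictly lowers the degree of each tensor factor. As degrees are non-negative integers summing to $d$, the iterated reduced coproduct of a degree-$d$ element vanishes after at most $d$ steps, giving the point-wise nilpotency of condition (3).

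The core of the argument is the antipode. Being prescribed on the letters (single partitions) and extended multiplicatively for $m_{\gtimes}$, the map $S$ is a horizontal algebra morphism by construction, so only the two identities \eqref{eqn:antipode} remain. The useful simplification is that each summand $\alpha\oboxtimes\beta$ of $\Delta(\pi)$ satisfies $\nabla(\alpha\oboxtimes\beta)=\pi$ by definition; since $S$ multiplies a letter by $(-1)^{\sharp}$ when it is an interval partition and kills it otherwise, $\nabla\circ(S\oboxtimes\mathrm{id})$ returns a scalar multiple of $\pi$. For a non-unit single partition $\pi$ the left-hand side of the left identity thus equals $\pi\cdot\Sigma(\pi)$ with
\begin{equation*}
\Sigma(\pi)=\sum_{\substack{\pi=\beta\circ(\alpha_{1},\ldots,\alpha_{k})\\ \alpha_{1},\ldots,\alpha_{k}\in\mathrm{Int}}}(-1)^{\sharp\alpha_{1}+\cdots+\sharp\alpha_{k}},
\end{equation*}
the sum running over all gap-insertion factorizations of $\pi$ with interval inner pieces (trivial ones included); the left antipode equation is therefore equivalent to $\Sigma(\pi)=0$ for every non-unit $\pi$, and the right one to the analogous vanishing with the outer $\beta$ restricted to intervals and weighted by $(-1)^{\sharp\beta}$. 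Establishing these vanishing identities is the main obstacle. I expect to prove them by a sign-reversing involution on the set of admissible factorizations, toggling the membership of a distinguished block (say the one through the first gap of the outer skeleton) between $\beta$ and the adjacent inner interval piece, which flips the parity of $\sum_{i}\sharp\alpha_{i}$ and has no fixed point once $\pi\neq\{\emptyset\}$; or, equivalently, by induction on degree through the recursion $S(\pi)=-\pi-\sum_{\mathrm{red}}\nabla(S(\alpha)\oboxtimes\beta)$ that condition (2) forces. This is exactly the antipode computation for the scalar gap-insertion bialgebra in \cite{ebrahimi2019operads}, which transports here since the underlying bialgebra of non-crossing partitions is unchanged; the delicate point is to verify that the block moved by the involution preserves the non-crossing condition and keeps the pieces that must remain intervals interval.
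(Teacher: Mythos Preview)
Your verification of the PROS, bialgebra and co-nilpotency axioms is sound and in fact more explicit than the paper, which takes those for granted and only checks the antipode identity. The substantive difference is in how the antipode is handled.

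For the antipode the paper gives a one-line counting argument, not an involution or an induction. Observe that in $\nabla\circ(S\oboxtimes\mathrm{id})\circ\Delta(\pi)$ only those cuts $(L,U)$ survive for which the lowerset $L$ is an interval partition; but a lowerset of $\pi$ is an interval partition if and only if it consists of a subset of the \emph{outer} blocks of $\pi$ (including any nested block forces its enclosing block to be non-convex in $L$). If $\pi$ has $m$ outer blocks, the surviving sum is therefore $\sum_{k=0}^{m}\binom{m}{k}(-1)^{k}\,\pi=0$. Dually, in $\nabla\circ(\mathrm{id}\oboxtimes S)\circ\Delta(\pi)$ the surviving uppersets are exactly the subsets of the \emph{convex} (innermost) blocks of $\pi$, and with $n$ such blocks one again gets $\sum_{k=0}^{n}\binom{n}{k}(-1)^{k}\,\pi=0$. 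This bypasses entirely the need for a sign-reversing involution or an inductive recursion for $S$.

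Your proposed involution (``toggle the block through the first gap of the outer skeleton'') is the right instinct but is stated too loosely to be a proof: you do not specify which block is distinguished in a way that is stable under the toggle, and you yourself flag the delicate point about preserving the interval constraint. The paper's argument shows that the correct parametrization is by subsets of outer (respectively convex) blocks, after which the cancellation is just $(1-1)^{m}=0$; once you see this, the involution you are looking for is simply ``toggle membership of a fixed outer (respectively convex) block'', with no subtlety about non-crossing or interval preservation.
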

\begin{proof}
	We check only that $\nabla^{T_{\gtimes}(\mathcal{N}\mathcal{C})} \circ S\oboxtimes \textrm{id} \circ \Delta_{T_{\gtimes}(\mathcal{N}\mathcal{C})} = \nabla_{T_{\gtimes}(\mathcal{N}\mathcal{C})} \circ \textrm{id} \oboxtimes S = \eta \circ \varepsilon$.
	Let $\pi$ be a non-crossing partition. Set $n$ equal to be equal to the number of convex blocks of $\pi$, and let $m$ be the number of blocks of $\pi$ not contained in any other blocks (those blocks are ofently called outer blocks).
	We have first
	\begin{equation*}
		(\nabla_{T_{\gtimes}(\mathcal{N}\mathcal{C})} \circ (\textrm{id} \oboxtimes S) \circ \Delta^{T_{\gtimes}(\mathcal{N}\mathcal{C})}) (\pi) = \sum_{k=0}^{n}\binom{n}{k}(-1)^{k} \pi = 0,
	\end{equation*}
	and also
	\begin{equation*}
		\nabla_{T_{\gtimes}(\mathcal{N}\mathcal{C})} \circ (S \oboxtimes \textrm{id}) \circ \Delta^{T_{\gtimes}(\mathcal{N}\mathcal{C})} = \sum_{k=0}^{m}\binom{m}{k}(-1)^{k}\pi = 0.
	\end{equation*}
\end{proof}


\section{Shuffle point of view on operator-valued probability theory}
\label{sec:shufflepointofview}
The main result of this section is Proposition \ref{prop:shuffle}. We then compute half-shuffle exponentials and show that any (extension as an horizontal morphism of an) operadic morphism on $\mathcal{N}\mathcal{C}$ is a left half-shuffle exponential. We compute the right half-shuffle exponential and the full shuffle exponential.

First, We recall classical results and definitions related to shuffle algebras. The terminology shuffle refers actually to different kind of objects. In the literature, the first meaning to shuffle arises from products of iterated integrals. As such it designates a commutative binary product. The second meaning refers to topological shuffles, the latter being non-commutative. These notions can be traced back at least to the 1950's, when these two notions were axiomatized in the work of Eilerberg--Maclane and Sch\"utzenberger. In this section, shuffle will always refer to the non-commutative case.

A \emph{shuffle} (or \emph{dendrimorphic}) algebra is a $\mathbb{K}$ vector space $D$ together with two bilinear compositions $\prec$ and $\succ$ subject to the following three axioms
\begin{align*}
	 & (a \prec b ) \prec c = a \prec (b \prec c + b \succ c), \\
	 & (a \succ b) \prec c = a \succ (b \prec c),              \\
	 & a \succ (b \succ c) = (a \succ b + a \prec b) \succ c.
\end{align*}
These three relations yield the following associative shuffle algebra product $a \shuffle b = a \prec b + a \succ b$ on $D$. The products $\prec$ and $\succ$ are called, respectively, \emph{left half-shuffle} and \emph{right half-shuffle}.
The standard example of a commutative shuffle algebra (meaning that $a\shuffle b$ = $b \shuffle a$) is provided by the tensor algebra $\bar{T}(V)$ over a $\mathbb{K}$ vector space $V$ endowed with a left half-shuffle recursively defined by
\begin{equation*}
	(x_{1}\otimes \cdots \otimes x_{n}) \prec (y_{1} \otimes  \cdots \otimes y_{n})
	= x_{1} \otimes (x_{2} \otimes \cdots \otimes x_{n} \shuffle y_{1}\otimes\cdots\otimes y_{m}).
\end{equation*}
Shuffle algebras are not naturally unital. This is because it is impossible to split the unit equation $1\shuffle a = a\shuffle 1 = a$, into two equations involving the half-shuffles products $\succ$ and $\prec$. This issue is circumvented by using the "Sch\"utzenberger" trick, that is, for $D$ a shuffle algebra, $\bar{D} = D \oplus \mathbb{K}1$. denotes the shuffle algebra augemented by a unit $\textbf{1}$ such that
\begin{equation*}
	a \prec \textbf{1} = a = \textbf{1} \succ a,~ \textbf{1} \prec a = 0 = a \succ \textbf{1}
\end{equation*}
implying $\textbf{1} \shuffle a = a\shuffle \textbf{1} = a$. By convention, $\textbf{1}\shuffle \textbf{1}= \textbf{1}$, but $\textbf{1} \prec \textbf{1}$ and $\textbf{1} \succ \textbf{1} = 0$ cannot be defined consistently. The following set of left- and right half-shuffle words in $\bar{D}$ are defined recursively for fixed elements $(x_{1},\ldots,x_{n}) \in D$, $n \in \mathbb{N}$
\begin{align*}
	 & w_{\prec}^{(0)}(x_{1},\ldots,x_{n})= 1 = w^{(0)}(x_{1},\ldots,x_{n})               \\
	 & w_{\prec}^{(n)}(x_{1},\ldots,x_{n}) = x_{1} \prec w^{(n-1)}(x_{2},\ldots,x_{n})    \\
	 & w_{\succ}^{(n)}(x_{1},\ldots,x_{n}) = w^{(n-1)}(x_{1},\ldots,x_{n-1}) \succ x_{n}.
\end{align*}
In the case $x_{1} = \cdots = x_{n} = x$, we simply write $x^{\prec n} = w_{\prec}^{(n)}(x,\ldots,x)$ and $x^{\succ n} = w_{\succ}^{(n)}(x,\ldots,x)$.
In the unital algebra $\bar{D}$, both the exponential and logarithm maps are defined in terms of the associative product $\shuffle$:
\begin{equation*}
	\exp_{\shuffle}(x) = 1 + \sum_{n\geq 1}\frac{x^{\shuffle n}}{n!},~ \textrm{log}(1 + x)
	= - \sum_{n\geq 1}(-1)^{n}\frac{x^{\shuffle n}}{n!}.
\end{equation*}
In general, the two sums in the last equation are formal sums. However, in many cases of interest, we are able to identify a subset of elements of $D$ for which these two sums are finite sums.
The half-shuffle exponentials also called "time-ordered" exponentials and are defined by mean of the two shuffles $\prec$ and $\succ$:
\begin{equation*}
	\exp_{\prec}(x) = \textbf{1} + \sum_{n \geq 1} x^{\prec n},~
	\exp_{\succ}(x) = \textbf{1}+\sum_{n\geq 1} x^{n \succ}.
\end{equation*}
Notice that the two \emph{half-shuffle} exponentials are solution of the following fixed point equations:
\begin{equation*}
	X = \textbf{1} + x \prec X,
	\quad
	X = \textbf{1} + X\succ x.
\end{equation*}
These two time-ordered exponentials and the shuffle exponential are the key ingredients to the Hopf algebraic approach of moment-cumulant relations in non-commutative probability theory.

\begin{lemma}[Lemma 2 in \cite{ebrahimi2015cumulants}]
	\label{lem:leftrighinv}
	Let $A$ be a shuffle algebra, and $\bar{A}$ its augmentation by a unit $\textrm{1}$. For $x \in A$, we have
	\begin{equation*}
		\exp_{\succ}(-x) \shuffle \exp_{\prec}(x) = \exp_{\succ}(-x) \shuffle \exp_{\prec}(x) = 1.
	\end{equation*}
\end{lemma}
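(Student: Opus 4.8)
The statement asserts that $\exp_{\prec}(x)$ and $\exp_{\succ}(-x)$ are mutually inverse for the associative product $\shuffle$, so the plan is to prove $\exp_{\prec}(x)\shuffle\exp_{\succ}(-x)=\mathbf{1}$ (the reverse order following formally, see below). Write $f=\exp_{\prec}(x)=\mathbf{1}+\bar f$ and $h=\exp_{\succ}(-x)=\mathbf{1}+\bar h$, where $\bar f,\bar h$ lie in the augmentation ideal $A$. Using the fixed point equations $f=\mathbf{1}+x\prec f$ and $h=\mathbf{1}+h\succ(-x)=\mathbf{1}-h\succ x$ together with the unit conventions $x\prec\mathbf{1}=x$, $\mathbf{1}\succ x=x$, I would first record the two relations $\bar f=x+x\prec\bar f$ and $\bar h=-x-\bar h\succ x$. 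The crucial point is to \emph{split off the units before using the half-shuffles}, since $\mathbf{1}\prec\mathbf{1}$ and $\mathbf{1}\succ\mathbf{1}$ are undefined: one has $f\shuffle h=\mathbf{1}+\bar f+\bar h+\bar f\shuffle\bar h$ (the unit-unit contribution being $\mathbf{1}\shuffle\mathbf{1}=\mathbf{1}$ by convention), so the claim reduces to the identity $\bar f+\bar h+\bar f\shuffle\bar h=0$ inside $A$, where everything is well defined.

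Set $S=\bar f\shuffle\bar h=\bar f\prec\bar h+\bar f\succ\bar h$. The key computation uses the two fixed point relations together with the left and right half-shuffle axioms. Expanding $\bar f\prec\bar h$ via $\bar f=x+x\prec\bar f$ and the left axiom $(x\prec\bar f)\prec\bar h=x\prec(\bar f\shuffle\bar h)$ gives $\bar f\prec\bar h=x\prec\bar h+x\prec S$; expanding $\bar f\succ\bar h$ via $\bar h=-x-\bar h\succ x$ and the right axiom $\bar f\succ(\bar h\succ x)=(\bar f\shuffle\bar h)\succ x$ gives $\bar f\succ\bar h=-\bar f\succ x-S\succ x$. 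Adding these yields the fixed point equation
\begin{equation*}
	S=x\prec\bar h-\bar f\succ x+x\prec S-S\succ x,
\end{equation*}
that is $S=C+\Phi(S)$ with $C=x\prec\bar h-\bar f\succ x$ and $\Phi(S)=x\prec S-S\succ x$.

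It remains to solve this. I would check directly that $S_{0}:=-\bar f-\bar h$ satisfies the same equation: substituting $S_{0}$ into $C+\Phi(S)$ and simplifying reduces, after the cancellations, to $-x\prec\bar f+\bar h\succ x$, and using $x\prec\bar f=\bar f-x$ and $\bar h\succ x=-\bar h-x$ (read off from the two relations above) this equals $-\bar f-\bar h=S_{0}$. Uniqueness then finishes the argument: since we work in the complete filtered setting where $x$ has positive degree, $\Phi$ strictly raises degree, so $S=C+\Phi(S)$ admits the unique solution $\sum_{k\geq0}\Phi^{k}(C)$. Hence $S=S_{0}$, i.e. $\bar f+\bar h+S=0$, which gives $f\shuffle h=\mathbf{1}$.

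For the reverse order $\exp_{\succ}(-x)\shuffle\exp_{\prec}(x)=\mathbf{1}$, I would invoke graded-connectedness of $\bar D$: the element $f=\mathbf{1}+\bar f$ admits a unique two-sided $\shuffle$-inverse $\sum_{k\geq0}(-\bar f)^{\shuffle k}$, so the right inverse $h$ just constructed is automatically the two-sided inverse, whence $h\shuffle f=\mathbf{1}$ as well (alternatively, a symmetric computation starting from $\bar h\shuffle\bar f$ gives the same conclusion). The main obstacle throughout is the careful bookkeeping of the unit via the Sch\"utzenberger trick — one must never write $\mathbf{1}\prec\mathbf{1}$ or $\mathbf{1}\succ\mathbf{1}$ — together with justifying uniqueness of the fixed point, both of which hinge on the ambient grading/filtration that makes the half-shuffle exponentials well defined in the first place.
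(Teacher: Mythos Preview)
The paper does not supply a proof of this lemma; it is merely cited from \cite{ebrahimi2015cumulants}. Your argument is correct: the fixed-point manipulation using the shuffle axioms to derive $S=C+\Phi(S)$ with $\Phi(S)=x\prec S-S\succ x$, the direct verification that $S_{0}=-\bar f-\bar h$ solves it, and the uniqueness via the filtration are all sound, and your handling of the Sch\"utzenberger unit conventions is careful. The reverse-order identity is also correctly obtained from the existence of a two-sided $\shuffle$-inverse of $\mathbf{1}+\bar f$ in the connected graded setting.

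One minor remark: the lemma as stated in the paper makes no explicit filtration hypothesis, yet your uniqueness step (and indeed the very definition of the half-shuffle exponentials as convergent series) tacitly relies on one. This is not a gap in your reasoning so much as an implicit assumption already present in the paper's own treatment of these exponentials (cf.\ the remark there that ``in general, the two sums in the last equation are formal sums''); it would be worth stating explicitly that you are working in a complete filtered or graded setting where $x$ has positive degree.
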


We proceed with a small overview on the shuffle approach on (scalar-valued) non-commutative probability theory. The core of this approach is developed in \cite{ebrahimi2015cumulants,ebrahimi2016splitting,ebrahimi2018monotone}. Let $(\mathcal{A}, E)$ be a scalar-valued non-commutative probability space. Consider the space $H=\bar{T}(T(\mathcal{A}))$ defined as the linear span of all words on words on elements in $\mathcal{A}$ including the empty word. Then $H$ can be endowed with the unshuffle bialgebra structure $(\Delta,\varepsilon, \Delta_{\prec}, \Delta_{\succ})$, see for example Definition 3 in \cite{ebrahimi2015cumulants}.
Because of the relations satisfied by the half unsuffle coproducts $\Delta_{\prec}$ and $\Delta_{\succ}$, the vector space of all linear forms on $H$ is a shuffle algebra if endowed with the half-shuffles dual to the two unshuffle coproducts.
The authors in \cite{ebrahimi2015cumulants} define a moment morphism $\Phi : H \rightarrow \mathbb{C}$, which is a morphism for the concatenation product on $H$ whose value on a word $a_{1} \otimes \cdots \otimes a_{n} $ (a ``letter'' in $H$) is
\begin{equation}
	\Phi(a_{1}\otimes\cdots \otimes a_{n}) = E(a_{1}\cdot_\mathcal{A} \cdots \cdot_\mathcal{A} a_{n}).
\end{equation}
Then, $\Phi$ is an element of the monoid $\mathrm{Hom}_{\mathrm{Alg}}(H,\mathbb{C})$ of characters of the algebra $H$, endowed with the shuffle product dual to $\Delta$. Since $H$ is connected and nilpotent, $H$ is a Hopf algebra. Therefore $G=\mathrm{Hom}_{\mathrm{Alg}}(H,\mathbb{C})$ is a group and the two half-shuffle exponentials together with the shuffle exponential define three maps from the Lie algebra $Lie(G)$ to $G$. Thus, there exist three linear maps $k,b,m: H \rightarrow \mathbb{C}$ such that:
\begin{equation}
	\label{eqn:momentcumulants}
	\Phi = \varepsilon + k \prec \Phi = \varepsilon + \Phi \succ b = \exp_{\shuffle}(m).
\end{equation}
The three maps $k,b$ and $m$ can be identified with, respectively, the free, boolean and monotone cumulants in the following way. As elements of the Lie algebra $Lie(G)$ they are equal to zero on non-trivial products of words in $H$. On word $w \in T(\mathcal{A})$ they coincide each with one of the tree cumulant functions.
Notice that equation \eqref{eqn:momentcumulants} is equivalent to the free, boolean and monotone moment-cumulant relations. From this perspective cumulants and moments are not on the same footing. Indeed, cumulants are considered infinitesimal objects while moments are encoded by an algebra morphism. Later on, the authors in \cite{ebrahimi2019operads} linked the shuffle approach to a particular operad on non-crossing partitions and the M\"obius inversion to another operad on non-crossing partitions. In this settings, the free cumulants of a random variable become an algebra morphism on the space $N$ of words on non-crossing partitions, seen as solution of a left half-shuffle fixed point equation and the moment-cumulant relations are retrieved through an action (compatible with the convolution coproduct on the dual $N^{\star}$) of an element of the monoid of morphisms on a coalgebra associated with the second operad. To retrieve the moments-cumulants relations for operator-valued probability spaces we will define an operator-valued counterpart of the splitting map defined in \cite{ebrahimi2016splitting}.

\subsection{Unshuffle \raisebox{-1pt}{$\oboxtimes$}$\gtimes$-bialgebras.}

The dual notion of unshuffle algebra appeared after the notion of shuffle algebra in the literature. It has first been considered by L.~Foissy, in its seminal work \cite{foissy2007bidendriform} on the Duchamp--Hivert--Thibon "free Lie algebra" conjecture. We introduce a notion of unshuffle bialgebra adapted to our settings and show that the dual, in a certain sense, of such a bialgebra is a plain shuffle algebra.

\begin{definition}
	\label{def:unshufflecolldeux}
	An unshuffle co-algebra in \textrm{Coll}$_{2}$ is a coaugmented coassociative coalgebra
	$$(\bar{C}= C \oplus \pmb{\mathbb{C}}_{\oboxtimes}, \Delta),~ C(n,n)=0,~n\geq 0$$
	in the monoidal category ($\mathrm{Coll}_{2}$,~$\oboxtimes$, $\pmb{\mathbb{C}}_{\oboxtimes}$) with coproduct
	$$\bar{\Delta}: \bar{C} \rightarrow \bar{C} \oboxtimes \bar{C},~\Delta \in \mathrm{Hom}_{\textrm{Coll}_{2}}(\bar{C},\bar{C}\oboxtimes \bar{C})$$ such that for any $c \in C$,
	$\bar{\Delta}(c) = \Delta(c) + c \oboxtimes 1_{m} + 1_{n} \oboxtimes c$. The reduced coproduct ${\Delta}$ splits into two half unshuffle coproducts $\Delta_{\prec}$ and $\Delta_{\succ}$ such that
	$$
		\Delta = \Delta_{\prec} + \Delta_{\succ}
	$$
	and they satisfy the three following equations:
	\begin{align}
		\label{eqn:unshufflerel}
		 & (\Delta_{\prec} \oboxtimes I)\circ\Delta_{\prec}    = (I \oboxtimes {\Delta}) \circ \Delta_{\succ},~
		({\Delta}\oboxtimes I) \circ \Delta_{\succ}     = (I \oboxtimes \Delta_{\succ}) \circ \Delta_{\succ}                  \\
		 & (\Delta_{\succ} \oboxtimes I) \circ \Delta_{\prec}  = (I \oboxtimes \Delta_{\prec})\circ \Delta_{\succ} \nonumber.
	\end{align}
\end{definition}

In the following definition, we use the shorter notation $\rho^{\oboxtimes}$ for the horizontal algebra product on $C\oboxtimes C$ if $(C,\rho)$ is an horizontal algebra in Coll$_{2}$.
\begin{definition}
	\label{def:unshufflebialgebra}
	\emph{An unshuffle} \raisebox{-1pt}{$\oboxtimes$}$\gtimes$\emph{-bialgebra} is a conilpotent \raisebox{-1pt}{$\oboxtimes$}$\gtimes$-bialgebra $(\bar{C} = C \oplus \pmb{\mathbb{C}}_{\oboxtimes}, \Delta, \rho)$ with
	$$
		\bar{\Delta}(c) = {\Delta}(c) + c \oboxtimes 1_{m} + 1_{n} \oboxtimes c,~c \in C(n,m),
	$$
	and $\bar{\Delta} =  \Delta_{\prec}+\Delta_{\succ}$ is an unshuffle coproduct (see Definition \ref{def:unshufflecolldeux}), satisfying the following compatibility conditions:
	\begin{align}
		 & \Delta \circ \rho = \rho^{\oboxtimes} \circ (\Delta \gtimes \Delta)                                                                                                                                                                                          \\
		 & (\Delta_{\prec}^{+} \circ \rho) (p \gtimes q)= \rho^{\oboxtimes} \circ (\Delta_{\prec}^{+} \gtimes \Delta)(p\gtimes q),~(\Delta_{\succ}^{+} \circ \rho)(p\gtimes q)= \rho^{\oboxtimes} \circ (\Delta_{\succ}^{+} \oboxtimes \Delta)(p\gtimes q)              \\
		 & \nonumber \hspace{5cm} p\not\in \pmb{\mathbb{C}}_{\oboxtimes},~q \in C,                                                                                                                                                                                      \\
		 & \label{eqn:modulco}\Delta_{\prec}^{+}(\rho(1_{m} \gtimes q)) = \rho^{\oboxtimes}((1_{m}\oboxtimes 1_{m}) \gtimes \Delta_{\prec}^{+}(q)), \Delta_{\succ}^{+}(\rho(1_{m}\gtimes q)) = \rho^{\oboxtimes}((1_{m}\oboxtimes 1_{m}) \gtimes \Delta_{\succ}^{+}(q))
	\end{align}
	with $\Delta_{\prec}^{+}(c) = \Delta_{\prec}(c) + c \oboxtimes 1_{n}$, $\Delta_{\succ}^{+}(c) = \Delta_{\succ}(c) + 1_{m} \oboxtimes c$,~$c\in C(m,n)$.
\end{definition}

\subsection{The $\boxdtimes\gtimes$-unshuffle bialgebra of the gap-insertion operad}
\label{ssec:unshuffleGap}

In this section, we focus on non-crossing partitions and define the \raisebox{-1pt}{$\oboxtimes$}$\gtimes$-unshuffle Hopf algebra relevant for the application to operator-valued non-commutative probability theory.

To lighten the notations, we use $\Delta$ and $\nabla$ in place of $\Delta^{T_{\gtimes}(\mathcal{N}\mathcal{C})}$ and $\nabla_{T_{\gtimes}(\mathcal{N}\mathcal{C})}$. We also recall that $\mathrm{Hom}(B)$ is the operad of multilinear maps on $B$ an denote by $\nabla_{\mathrm{Hom}(B)}$ the PROS structure $T_{\gtimes}(\mathcal{N}\mathcal{C})$ induced by the operadic compositon on $\mathrm{Hom}(B)$.

Let $\pi$ be a non-crossing partition of a linearly ordered set $X$. The set of blocks of $\pi$ carries a pre-order defined by declaring for two blocks $V_{1}$ and $V_{2}$ of $\pi$ that $V_{1} \rightarrow_{\pi} V_{2}$ to mean that Conv($V_{2}$) $\cap V_{1} \neq \emptyset$. In plain words, $V_{1} \rightarrow_{\pi} V_{2}$ means that $V_{2}$ is nested in $V_{1}$.

\begin{definition}[Upperset and lowerset]
	\label{def:uppersetlowerset}
	A \emph{lowerset} $L$ of $\pi$ is a set (which may be empty) of blocks of $\pi$ such that if $V \in L$ and $V \rightarrow_{\pi} W$ in $\pi$ then also $W \in L$.
	In plain words, if a block $V$ is in $L$ then all englobing blocks of $V$ are also in $L$ and $L$ is a non-crossing partition.


	An \emph{upperset} of a non-crossing partition $\pi \in \mathrm{N}\mathrm{C}(p)$ is a word $U_{0}\gtimes \cdots\gtimes U_{p}$ of length $p+1$ in $T_{\gtimes}(\mathcal{N}\mathcal{C})$ on non-crossing partitions such that there exists a lowerset $L \in \mathrm{N}\mathrm{C}(p)$ with
	$$\pi = \nabla (L \oboxtimes (U_{1}\gtimes \cdots \gtimes U_{p})).$$
\end{definition}

The notion of upperset and lowerset of a non-crossing partition (and for partitions) can be found in \cite{ebrahimi2019operads}. We denote by ${\sf Lo}(\pi)$ (respectively {\sf Up}($\pi$)) the set of all lowersets (respectively uppersets) of a non-crossing partition $\pi$.

%

Let $\pi$ be a non-empty non-crossing partition. Then a lowerset $L \in \mathrm{N}\mathrm{C}(p)$ of $\pi$ defines an upperset $U_{0}\gtimes\cdots \gtimes U_{p}$. Each of the partitions $U_{i}$ is either equal to the empty partition or is a subset of the partition $\pi$ such that if $V \in U_{i}$ then all blocks $W \in \pi$ such that $V\rightarrow W$ are also in $U_{i}$. Given a lowerset $L$, we denote by $L^{\gtimes}$ the associated upperset, by definition we have:
	\begin{equation}
		\pi = \nabla \left(L\oboxtimes L^{\gtimes}\right).
	\end{equation}
Notice that the lowerset $L$ in the definition of an upperset $U_{1}\gtimes \cdots \gtimes U_{p}$ is unique, the blocks of $L$ are the blocks of $\pi$ not in any of the $U_{i}^{'}s$ and we denote it $U^{\cdot}$.
A cut of $\pi$ is then the data of a lowerset $L$ and an upperset $U$ such that $\pi = \nabla (L \oboxtimes U)$.
Notice that in that case, $L = U^{\cdot}$ and $U = L^{\gtimes}$.
\begin{proposition}
	\label{prop:formulacoproduct}
	Let $\pi$ be a non-empty partition, then
	\begin{equation}
		\Delta_{T_{\gtimes}(\mathcal{N}\mathcal{C})}(\pi) = \sum_{(L,U)\in cut(\pi)} L \oboxtimes U.
	\end{equation}
\end{proposition}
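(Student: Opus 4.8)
The plan is to read both sides of the claimed identity as sums indexed by the fiber of the vertical product over $\pi$, and then to identify that fiber with the set of cuts. Recall that $\Delta = \Delta^{T_{\gtimes}(\mathcal{N}\mathcal{C})}$ was defined as the graded dual of $\nabla = \nabla_{T_{\gtimes}(\mathcal{N}\mathcal{C})}$, so that by construction
\begin{equation*}
	\Delta(\pi) = \sum_{\nabla(\alpha \oboxtimes \beta) = \pi} \alpha \oboxtimes \beta .
\end{equation*}
Since the right-hand side of the proposition is $\sum_{(L,U) \in cut(\pi)} L \oboxtimes U$ and, by definition, a cut of $\pi$ is exactly a pair $(L,U)$ with $L$ a lowerset, $U$ an upperset and $\nabla(L \oboxtimes U) = \pi$, it suffices to prove the set equality
\begin{equation*}
	\{ (\alpha,\beta) : \nabla(\alpha \oboxtimes \beta) = \pi \} = \{ (L,U) : (L,U) \in cut(\pi) \}.
\end{equation*}
The inclusion $\supseteq$ is immediate from the definition of a cut, so the entire content lies in the reverse inclusion.

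First I would fix the shape of an arbitrary factorization. Because $\pi$ is a single partition it lives in bidegree $(\ast,1)$ of $T_{\gtimes}(\mathcal{N}\mathcal{C})$, i.e.\ it has a single output; unwinding the definition of $\oboxtimes$ forces, in any decomposition $\nabla(\alpha \oboxtimes \beta) = \pi$, one of the two factors to be a single non-crossing partition $\sigma$ (the one that will play the role of $L$) and the other to be a word $U_{1}\gtimes \cdots \gtimes U_{m}$ of $m$ partitions, where $m$ is the number of gaps of $\sigma$. Under the identification of $\nabla$ with the gap-insertion operadic composition $\rho_{\mathcal{N}\mathcal{C}}$ (through the isomorphism $\phi$ and the induced horizontal morphism $T_{\gtimes}(\rho_{\mathcal{N}\mathcal{C}})$), this says precisely that $\pi$ is obtained by inserting $U_{1},\ldots,U_{m}$ into the $m$ gaps of $\sigma$.

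The key combinatorial step is to show that the outer partition $\sigma$ arising from any such factorization is a lowerset of $\pi$ and that the inserted word is the associated upperset $\sigma^{\gtimes}$. For this I would track how blocks and their nesting behave under gap-insertion: the blocks of $\pi$ split into those inherited from $\sigma$ and those contributed by the inserted pieces $U_{i}$, and since each $U_{i}$ is localized inside a single gap of $\sigma$, every block it contributes is nested strictly inside the blocks of $\sigma$ that span across that gap. Consequently a block coming from $\sigma$ can only be englobed by another block of $\sigma$, so the set of $\sigma$-blocks is closed under passing to englobing blocks; this is exactly the defining property of a lowerset. The inserted pieces, grouped by gap, then reconstitute the complementary word of partitions, which is by definition the upperset. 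This identifies $(\alpha,\beta)$ with a cut $(L,U) = (\sigma,\sigma^{\gtimes})$.

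Finally I would invoke the uniqueness already recorded before the statement — a lowerset $L$ determines its upperset $L^{\gtimes}$ uniquely, and an upperset $U$ determines its lowerset $U^{\cdot}$ as the set of blocks lying in none of the $U_{i}$ — to conclude that the assignment $(\alpha,\beta)\mapsto(L,U)$ is a bijection onto $cut(\pi)$; the two trivial factorizations, with $\sigma = \pi$ or $\sigma$ the empty partition, correspond to the cuts by the full and the empty lowersets. I expect the main obstacle to be the nesting bookkeeping of the third paragraph: one must argue cleanly that gap-insertion neither creates new englobing relations among $\sigma$-blocks nor allows an inserted block to englobe a $\sigma$-block, since this is exactly what rules out spurious decompositions and guarantees that every factorization of $\pi$ is genuinely a cut.
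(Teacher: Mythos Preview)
Your argument is correct and is exactly the verification the paper omits: in the text the proposition is stated without proof, immediately after the definitions of lowerset, upperset and cut, and is evidently meant to be read as a direct unpacking of those definitions together with the defining formula $\Delta(\pi)=\sum_{\nabla(\alpha\oboxtimes\beta)=\pi}\alpha\oboxtimes\beta$. Your reduction to showing that every factorization $\nabla(\alpha\oboxtimes\beta)=\pi$ forces $\alpha$ to be a single non-crossing partition whose blocks form a lowerset (so that $\beta$ is automatically the associated upperset, by the very definition of upperset) is the only thing there is to check, and your nesting argument for it is sound. In short: same approach as the paper, just made explicit where the paper leaves it tacit.
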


In the following we denote by $T^{+}_{\gtimes}(\mathcal{N}\mathcal{C})$ the subspace of $T_{\gtimes}(\mathcal{N}\mathcal{C})$ generated by words on non-empty partitions. Notice that the horizontal morphism $\Delta$ splits as
\begin{equation*}
	\Delta(w) = \bar{\Delta}(w) + \{\emptyset\}^{m} \oboxtimes w + w \oboxtimes \{\emptyset \}^{n},~
	w \in T_{\gtimes}(\mathcal{N}\mathcal{C})(m,n), m\neq n.
\end{equation*}
In the following definition, we write $1 \in L$ if the block of $\pi$ that contain $1$ is in the lowerset $L$.

To an upperset of a partition corresponds a subset of blocks of $\pi$. Hence, given a cut $(L,U)$ of $\pi$ we write $1 \in U$ (respectively, $1 \in L$) if the blocks of $\pi$ that contains $1$ is in $U$ (in $L$).
\begin{definition}[Half-unshuffles on $T_{\gtimes}(\mathcal{N}\mathcal{C})$]
	\label{def:unshufflenc}
	We define two bicollection maps $\Delta^{+}_{\prec}: T^{+}_{\gtimes}(\mathcal{N}\mathcal{C})\rightarrow T_{\gtimes}(\mathcal{N}\mathcal{C})^{\oboxtimes 2}$, $\Delta^{+}_{\prec}: {T}^{+}_{\gtimes}(\mathcal{N}\mathcal{C})\rightarrow T_{\gtimes}(\mathcal{N}\mathcal{C})^{\oboxtimes 2}$. Let $\pi \in \mathcal{N}\mathcal{C}$ be a non-empty partition and set
	\begin{equation}
		\Delta_{\prec}^{+}(\pi) = \sum_{\substack{(L,U) \in cut(\pi) \\ 1 \in L}}
		L \oboxtimes U,\quad	\Delta^{+}_{\succ}(\pi) = \sum_{\substack{(L,U) \in cut(\pi) \\ 1 \in U}} L \oboxtimes U
	\end{equation}
	We extend $\Delta^{+}_{\prec}$ and $\Delta^{+}_{\succ}$  by setting for a word $w \in T^{+}_{\gtimes}(\mathcal{N}\mathcal{C})$ and a partition $p\in\mathcal{N}\mathcal{C}$ and integer $q\geq 0$:
	\begin{equation}
		\label{eqn:extension}
		\Delta_{\prec}(\{\emptyset\}^{q}p_{1}w) = (\{\emptyset\}^{q} \oboxtimes \{\emptyset\}^{q})\Delta_{\prec}(p)\Delta(w),~\Delta_{\succ}(\{\emptyset\}^{q}pw) = (\{\emptyset\}^{q}\oboxtimes \{\emptyset\}^{q}) \Delta_{\succ}(p)\Delta(w).
	\end{equation}
\end{definition}

\begin{figure}
	\includegraphics[scale=0.75]{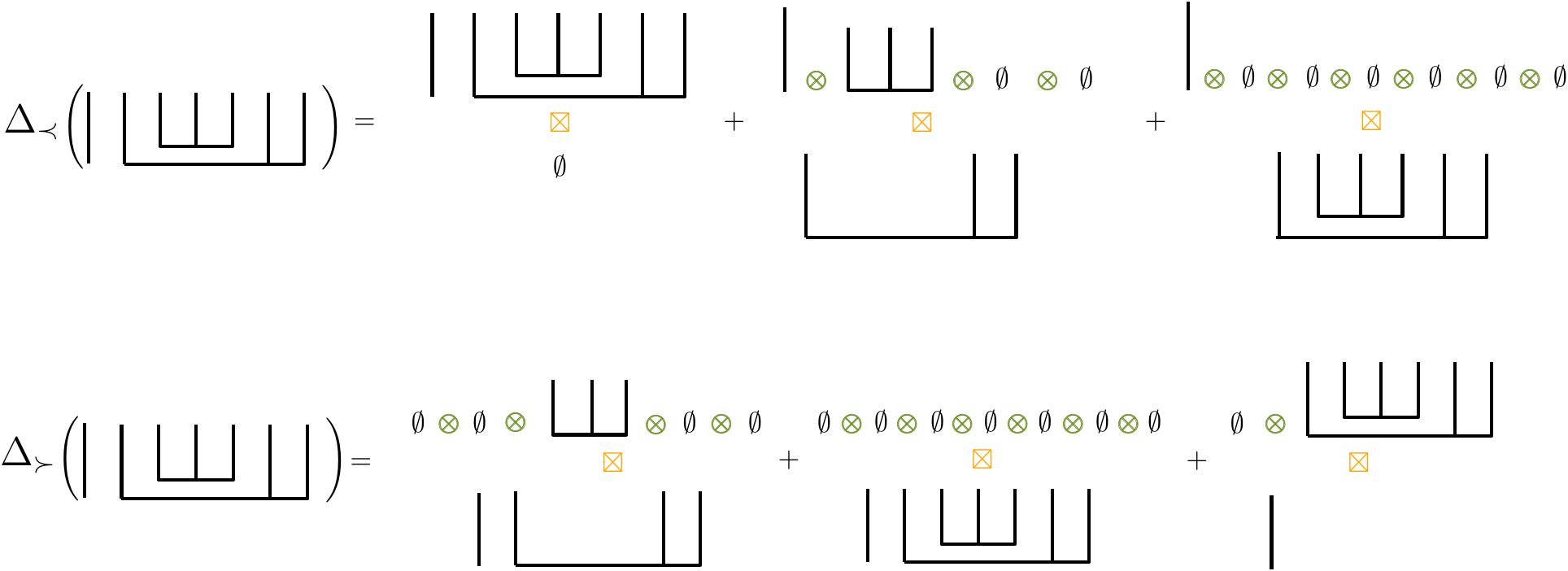}
	\caption{\label{fig:exnoncrossingpartition} The two half unshuffle coproducts acting on a non-crossing partition.}
\end{figure}
From the very definition of the two left/right half-shuffles $\Delta^{+}_{\prec}$ and $\Delta^{+}_{\succ}$, its holds that $\Delta = \Delta_{\prec}+\Delta_{\succ}$.

\begin{proposition}
	\label{prop:unshufflenc}
	$(T_{\gtimes}(\mathcal{N}\mathcal{C}),\Delta,\Delta_{\prec}, \Delta_{\succ})$ is an unshuffle bialgebra in $\mathrm{Coll}_{2}$.
\end{proposition}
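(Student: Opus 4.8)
The plan is to build on Proposition~\ref{prop:ncunshuffle}, which already supplies the conilpotent $\oboxtimes\gtimes$-bialgebra structure $(T_{\gtimes}(\mathcal{N}\mathcal{C}),\Delta,\nabla,\dots)$ together with the compatibility of $\Delta$ with the concatenation product $\rho=m_{\gtimes}$ encoded in the diagram of Figure~\ref{fig:compatmulticoprod}. Since the identity $\Delta=\Delta_{\prec}+\Delta_{\succ}$ holds by the very formulas of Definition~\ref{def:unshufflenc}, what remains is exactly (i) to verify that $\bar{\Delta}=\Delta_{\prec}+\Delta_{\succ}$ is an unshuffle coproduct in the sense of Definition~\ref{def:unshufflecolldeux}, i.e.\ the three relations \eqref{eqn:unshufflerel}, and (ii) to establish the product-compatibility conditions of Definition~\ref{def:unshufflebialgebra}, including the module relations \eqref{eqn:modulco}.

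For (i) I would argue combinatorially from the cut description of Proposition~\ref{prop:formulacoproduct}. Coassociativity of $\Delta$, which is already known, identifies the twofold reduced coproduct with the sum over threefold cuts $\pi=\nabla\big(\nabla(A\oboxtimes B)\oboxtimes C\big)$, that is, over chains of nested lowersets, producing terms $A\oboxtimes B\oboxtimes C$. Let $V_{\star}$ denote the block of $\pi$ containing the element $1$. The crucial observation is that $V_{\star}$ survives each cut and is carried into exactly one of the three vertically stacked pieces $A$, $B$, $C$, and that the half-coproducts are built precisely so that the condition ``$1\in L$'' (resp.\ ``$1\in U$'') at a single cut records whether $V_{\star}$ sits below (resp.\ above) that cut. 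Writing $\Delta^{(2)}=\Delta^{(2)}_{A}+\Delta^{(2)}_{B}+\Delta^{(2)}_{C}$ for the splitting according to which piece contains $V_{\star}$, a direct bookkeeping then shows that each of the three relations in \eqref{eqn:unshufflerel} expresses the equality of two enumerations of the same decorated threefold cuts, namely those with $V_{\star}$ in a fixed one of the three pieces: one relation computes $\Delta^{(2)}_{C}$ ($V_{\star}$ topmost), one computes $\Delta^{(2)}_{B}$ ($V_{\star}$ in the middle), and one computes $\Delta^{(2)}_{A}$ ($V_{\star}$ bottommost). In other words, the three half-shuffle relations are nothing but coassociativity refined along the three possible locations of $V_{\star}$.

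For (ii) the top relation $\Delta\circ\rho=\rho^{\oboxtimes}\circ(\Delta\gtimes\Delta)$ is the bialgebra compatibility already contained in Proposition~\ref{prop:ncunshuffle}. To obtain its left and right halves I would restrict this identity to the summands selected by the position of $V_{\star}$. When $p\notin\pmb{\mathbb{C}}_{\oboxtimes}$, the element $1$ of the concatenation $\rho(p\gtimes q)$ lies in $p$, so $V_{\star}$ goes below a cut of $\rho(p\gtimes q)$ exactly when the corresponding block of $p$ goes below in $\Delta(p)$; selecting these terms turns $\Delta$ on the $p$-factor into $\Delta_{\prec}^{+}$ while leaving the full $\Delta$ on $q$, which is precisely the stated compatibility (and symmetrically for $\Delta_{\succ}^{+}$). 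The module relations \eqref{eqn:modulco} are the degenerate case $p=1_{m}=\{\emptyset\}^{m}$: the empty partitions carry no block, so $1$ still comes from $q$, and the extension formulas \eqref{eqn:extension} show that the $1_{m}$-strands are merely transported as $1_{m}\oboxtimes 1_{m}$, yielding $\Delta_{\prec/\succ}^{+}(\rho(1_{m}\gtimes q))=\rho^{\oboxtimes}\big((1_{m}\oboxtimes 1_{m})\gtimes\Delta_{\prec/\succ}^{+}(q)\big)$.

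The main obstacle I anticipate is the bookkeeping in (i): making rigorous the claim that $V_{\star}$ lands in a unique piece and that the local conditions $1\in L$ and $1\in U$ glue correctly across a two-level gap-insertion. Because $\nabla$ shifts the labels of the elements when partitions are inserted into the gaps, one must verify that restricting a cut to its lowerset or upperset preserves the identity of the block of $1$ (most naturally by tracking $V_{\star}$ as the unique outer block meeting the leftmost gap), and that the threefold cuts indexed by chains of nested lowersets are genuinely in bijection with those indexed by cuts of the upper factor. Once this labelling discipline is set up, relations \eqref{eqn:unshufflerel} and the compatibility conditions of Definition~\ref{def:unshufflebialgebra} reduce to the finite enumerative checks sketched above.
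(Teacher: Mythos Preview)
Your proposal is correct and matches the paper's approach. The paper's proof does exactly what you outline in (i): it introduces compatible pairs of cuts $(L,M,U)\in cut_{2}(\pi)$ (your threefold cuts $A\oboxtimes B\oboxtimes C$) and verifies each of the three relations in \eqref{eqn:unshufflerel} by observing that both sides enumerate those triples with $1$ lying in $L$, $M$, or $U$ respectively---precisely your splitting $\Delta^{(2)}_{A}+\Delta^{(2)}_{B}+\Delta^{(2)}_{C}$ according to the location of $V_{\star}$. The paper also reduces to a single partition via \eqref{eqn:extension}, as you do, after extending the notions of cut and lowerset to words. Your part (ii) on the product-compatibility conditions of Definition~\ref{def:unshufflebialgebra} is not spelled out in the paper's proof (it is left implicit in the extension formulas \eqref{eqn:extension}), so your treatment is in fact slightly more complete on that point.
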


\begin{proof}
	For the sake of completeness, we present briefly the arguments given in \cite{ebrahimi2019operads}, Proposition 3.4.3. Let $\pi$ be a partition. It is sufficient to check the relations \eqref{eqn:unshufflerel} for a single partition, because of the equations \eqref{eqn:extension}. Even so, we need to define  lowersets, uppersets and cuts for words on (possibly empty) partitions. By convention, the only cut $(L,U)$ of the empty partition is $(\{\emptyset\}, \{\emptyset\})$. Notice that this convention is compatible with Proposition \ref{prop:formulacoproduct} since $\Delta(\{\emptyset\}) = \{ \emptyset \} \oboxtimes  \{\emptyset \}$

	Let $w = p_{1}\cdots p_{s}$ be a word on partitions with $p_{i} \in \mathcal{N}\mathcal{C}(k_{i})$ with $k_{i} \geq 0$.
	A lowerset of $w$ is a word $L_{1} \cdots L_{p}$ with $L_{i}$ a lowerset of the partition $p_{i}$. The notion of an upperset for $w$ is defined similarly, an upperset of $W$ is a word on uppersets one for each of the partition $p_{i}$. The notion of cut for partitions is then downwardly transferred to words on partitions. Then we have the formulas:
	\begin{equation}
		\Delta_{\prec}^{+}(w) = \sum_{\substack{(L,U) \in cut(w) \\ 1 \in L}}
		L \oboxtimes U,\quad	\Delta^{+}_{\succ}(w) = \sum_{\substack{(L,U) \in cut(w) \\ 1 \in U}} L \oboxtimes U
	\end{equation}
	for a word $w \in T_{\gtimes}^{+}(\mathcal{N}\mathcal{C})$.
	We say that $(L,M,U)$ is a compatible pair of cuts of $w$ if $L$ is a lowerset of $w$, $U$ is an upperset of $w$ with $L^{\gtimes} = \nabla U \oboxtimes M$ and $U^{\cdot}=\nabla(L \oboxtimes M$) (because $\Delta_{T_{\gtimes}(\mathcal{N}\mathcal{C})}$ is coassociative these two conditions are equivalent) with $L, M, U \not\in \pmb{\mathbb{C}}_{\oboxtimes}$. We denote by $cut_{2}(w)$ the set of compatible pairs of cuts of a words in $T_{\gtimes}(\mathcal{N}\mathcal{C})$. Let $\pi$ be a non-crossing partition, we have:

	\begin{align*}
		 & (\Delta_{\prec} \oboxtimes \textrm{id}) \circ \Delta_{\prec} (\pi) = \sum_{\substack{(L,M,U)\in cut_{2}(\pi) \\ 1 \in L}} L \oboxtimes M^{} \oboxtimes U^{} = (\textrm{id} \oboxtimes \Delta) \circ \Delta_{\prec} (\pi),\\
		 & (\Delta_{\succ} \oboxtimes \textrm{id}) \circ \Delta_{\prec} (\pi) = \sum_{\substack{(L,M,U)\in cut_{2}(\pi) \\ 1 \in M}} L \oboxtimes M^{} \oboxtimes U^{} = (\textrm{id} \oboxtimes \Delta_{\prec}) \circ \Delta_{\succ} (\pi)\\
		 & (\Delta \oboxtimes \textrm{id}) \circ \Delta_{\succ} (\pi)= \sum_{\substack{(L,M,U)\in cut_{2}(\pi)          \\ 1 \in U}} L \oboxtimes M^{} \oboxtimes U^{} = (\textrm{id} \oboxtimes \Delta_{\succ}) \circ \Delta_{\succ} (\pi).
	\end{align*}
\end{proof}
Thanks to \raisebox{-1pt}{$\oboxtimes$}$\gtimes$-bialgebra $T_{\gtimes}(\mathcal{N}\mathcal{C})$ being conilpotent, the following proposition holds.
\begin{proposition}
	The $\raisebox{-1pt}{$\oboxtimes$}\gtimes$-bialgebra $(T_{\gtimes}(\mathcal{N}\mathcal{C}), \Delta, \Delta_{\prec}, \Delta_{\succ})$ endowed with the vertical product $\nabla$ is a unshuffle Hopf algebra.
\end{proposition}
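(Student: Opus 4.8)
The plan is to obtain the unshuffle Hopf algebra structure by assembling the two structures already carried by $T_{\gtimes}(\mathcal{N}\mathcal{C})$, the only genuinely new ingredient being the antipode, which I expect to be supplied automatically by conilpotency. Recall that an unshuffle Hopf algebra is an unshuffle $\oboxtimes\gtimes$-bialgebra (Definition \ref{def:unshufflebialgebra}) additionally equipped with the vertical product $\nabla$ and an antipode $S$ satisfying the convolution-inverse equation \eqref{eqn:antipode}; so the proof reduces to checking that these three pieces of data are simultaneously available and mutually consistent.

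First I would invoke Proposition \ref{prop:unshufflenc}, which already establishes the unshuffle $\oboxtimes\gtimes$-bialgebra structure: the vertical coproduct splits as $\Delta = \Delta_{\prec} + \Delta_{\succ}$, the three unshuffle relations \eqref{eqn:unshufflerel} hold, and the compatibility of the half-coproducts with the horizontal concatenation product is precisely encoded by the extension formulas \eqref{eqn:extension}. Next I would invoke Proposition \ref{prop:ncunshuffle}, which equips the very same bicollection with the vertical product $\nabla$, the antipode $S$ (defined by signs on interval partitions), the counit $\varepsilon$ and the unit $\eta$, making it a co-nilpotent $\oboxtimes\gtimes$-Hopf algebra, and in particular verifies \eqref{eqn:antipode} for the full coproduct $\Delta$. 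Since the two propositions use literally the same coproduct $\Delta$, the same concatenation product, counit and unit, they glue into exactly the data required by the definition. I would also remark that the antipode equation \eqref{eqn:antipode} only involves the full coproduct $\Delta$ and not its half-refinements $\Delta_{\prec}, \Delta_{\succ}$, so no additional compatibility between $S$ and the two half-coproducts needs to be checked.

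The single conceptual step -- and the reason conilpotency is singled out in the statement -- is the existence of the antipode. Here I would note that the vertical convolution product $\alpha \star \beta = \nabla \circ (\alpha \oboxtimes \beta)\circ\Delta$ of Definition \ref{def:convprod} is well defined precisely because $\nabla$ and $\Delta$ both live in the monoidal category $(\mathrm{Coll}_{2}, \oboxtimes, \pmb{\mathbb{C}}_{\oboxtimes})$; conilpotency makes $\mathrm{id} - \eta\circ\varepsilon$ pointwise nilpotent for $\star$, so that the convolution inverse of $\mathrm{id}$ exists and is forced to coincide with the explicit map $S$ of Proposition \ref{prop:ncunshuffle}. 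This is the only substantive point; everything else is the bookkeeping verification that the comonoidal data of Proposition \ref{prop:unshufflenc} and the monoidal/antipodal data of Proposition \ref{prop:ncunshuffle} agree, so that the two structures combine into an unshuffle Hopf algebra.
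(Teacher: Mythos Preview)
Your proposal is correct and aligns with the paper's own approach: the paper does not give a standalone proof but simply precedes the proposition with the sentence ``Thanks to the $\oboxtimes\gtimes$-bialgebra $T_{\gtimes}(\mathcal{N}\mathcal{C})$ being conilpotent, the following proposition holds,'' relying implicitly on Propositions \ref{prop:ncunshuffle} and \ref{prop:unshufflenc} exactly as you do. Your elaboration that conilpotency makes $\mathrm{id}-\eta\circ\varepsilon$ pointwise $\star$-nilpotent, forcing the convolution inverse of $\mathrm{id}$ to exist and to coincide with the explicit $S$, is precisely the content the paper is gesturing at.
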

The splitting of the horizontal morphism $\Delta_{T_{\gtimes}(\mathcal{N}\mathcal{C})}$ into the two half-unshuffle $\Delta_{\prec}$ and $\Delta_{\succ}$ induces two bilinear (non-associative) composition on the vector space of bicollection morpisms from $T_{\gtimes}^{+}(\mathcal{N}\mathcal{C})$ to $T^{+}_{\gtimes}(\mathrm{Hom}(B))$ (with obvious notations):
\begin{equation*}
	f \prec g = \nabla_{\mathrm{Hom}(B)}\circ (f \oboxtimes g) \circ \Delta_{\prec},~ f \succ g = \nabla_{\mathrm{Hom}(B)}\circ(f \oboxtimes g) \circ \Delta_{\succ}, f,g \in \mathrm{Hom}_{\mathrm{Coll_{2}}}(T_{\gtimes}(\mathcal{N}\mathcal{C}),~T_{\gtimes}(\mathrm{Hom}(V))).
\end{equation*}
Recall that we wenote by $\eta_{\mathrm{Hom}(B)} : \pmb{\mathbb{C}}_{\oboxtimes} \rightarrow T_{\gtimes}(\mathrm{Hom}(V))$ the unique horizontal morphism such that $\eta_{\mathrm{Hom}(V)}(1_{1}) = \textrm{id}_{V}$.
We set $$f \prec \left(\eta_{\mathrm{Hom}(B)}\circ \varepsilon\right) = \left(\eta_{\mathrm{Hom}(V)} \circ \varepsilon \right)\succ f = f$$
and $$ \left(\eta_{\mathrm{Hom}(B)} \circ \varepsilon \right) \prec f = f \succ \left(\eta_{\mathrm{Hom}(B)} \circ \varepsilon\right) = 0.$$
The following proposition is a corollary of Proposition \ref{prop:unshufflenc} and equations \eqref{eqn:unshufflerel}. With the notation $\overline{\mathrm{Hom}}(T_{\gtimes}(\mathcal{N}\mathcal{C}), T_{\gtimes}(\mathrm{Hom}(B)) = \mathbb{C} (\eta \circ \varepsilon_{\mathrm{Hom}(B)}) \oplus \mathrm{Hom}(T^{+}_{\gtimes}(\mathcal{N}\mathcal{C}), T_{\gtimes}(\mathrm{Hom}(B))$,
the following proposition is a direct corollary of the last proposition.
\begin{proposition}
	\label{prop:shuffle}
	$(\overline{\mathrm{Hom}}(T_{\gtimes}(\mathcal{N}\mathcal{C}), T_{\gtimes}(\mathrm{Hom}(B))), \prec,\succ,\star)$ is a shuffle algebra.
\end{proposition}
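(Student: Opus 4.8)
The plan is to verify the three defining half-shuffle axioms directly, reducing each of them to one of the three coproduct relations \eqref{eqn:unshufflerel} established in Proposition \ref{prop:unshufflenc}. Throughout I write $\nabla := \nabla_{\mathrm{Hom}(B)}$ for the vertical PROS product on $T_{\gtimes}(\mathrm{Hom}(B))$ and recall the definitions $f \prec g = \nabla \circ (f \oboxtimes g) \circ \Delta_{\prec}$ and $f \succ g = \nabla \circ (f \oboxtimes g) \circ \Delta_{\succ}$. The two structural ingredients I would use are, first, the associativity of $\nabla$: since $T_{\gtimes}(\mathrm{Hom}(B))$ is a PROS one has $\nabla \circ (\nabla \oboxtimes \mathrm{id}) = \nabla \circ (\mathrm{id} \oboxtimes \nabla) =: \nabla_{3}$ as maps out of the threefold vertical product; and second, the bifunctoriality together with the strict associativity of $\oboxtimes$ on $\mathrm{Coll}_{2}$, which give the interchange identity $(g_{1}\circ f_{1}) \oboxtimes (g_{2} \circ f_{2}) = (g_{1} \oboxtimes g_{2}) \circ (f_{1} \oboxtimes f_{2})$ and make the iterated expressions $a \oboxtimes b \oboxtimes c$ and $\Delta_{\bullet} \oboxtimes I$ unambiguous.

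With these in hand each axiom becomes a transcription of a coproduct relation, and I would present the middle axiom $(a \succ b) \prec c = a \succ (b \prec c)$ as the model. Expanding the left-hand side and using interchange (writing $c = \mathrm{id}\circ c\circ \mathrm{id}$) to pull the morphisms out past $\nabla$ and $\Delta_{\prec}$ gives
\begin{equation*}
(a \succ b) \prec c = \nabla \circ (\nabla \oboxtimes \mathrm{id}) \circ (a \oboxtimes b \oboxtimes c) \circ (\Delta_{\succ} \oboxtimes I) \circ \Delta_{\prec},
\end{equation*}
while the same manipulation on the right-hand side (now writing $a = \mathrm{id}\circ a\circ \mathrm{id}$) yields
\begin{equation*}
a \succ (b \prec c) = \nabla \circ (\mathrm{id} \oboxtimes \nabla) \circ (a \oboxtimes b \oboxtimes c) \circ (I \oboxtimes \Delta_{\prec}) \circ \Delta_{\succ}.
\end{equation*}
After replacing both $\nabla \circ (\nabla \oboxtimes \mathrm{id})$ and $\nabla \circ (\mathrm{id} \oboxtimes \nabla)$ by $\nabla_{3}$, the two expressions differ only in their coproduct tail, and they coincide precisely because of the relation $(\Delta_{\succ} \oboxtimes I) \circ \Delta_{\prec} = (I \oboxtimes \Delta_{\prec}) \circ \Delta_{\succ}$ in \eqref{eqn:unshufflerel}. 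The remaining two axioms follow by the identical reduction from the other two relations of \eqref{eqn:unshufflerel}, the first axiom $(a\prec b)\prec c = a\prec(b\prec c + b\succ c)$ additionally using $\Delta = \Delta_{\prec} + \Delta_{\succ}$ to combine the two summands on the right.

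Finally I would record that all the sums involved are finite because $T_{\gtimes}(\mathcal{N}\mathcal{C})$ is conilpotent, so that $\prec$ and $\succ$ are well defined bilinear operations on bicollection morphisms, and I would check the unit conventions: $\eta \circ \varepsilon$ is a left unit for $\prec$ and a right unit for $\succ$ and is annihilated on the opposite side. This is exactly the Sch\"utzenberger augmentation exhibiting $\overline{\mathrm{Hom}}(T_{\gtimes}(\mathcal{N}\mathcal{C}), T_{\gtimes}(\mathrm{Hom}(B)))$ as the unital completion of the (non-unital) shuffle algebra carried by $\mathrm{Hom}(T^{+}_{\gtimes}(\mathcal{N}\mathcal{C}), T_{\gtimes}(\mathrm{Hom}(B)))$; the associative product $\star = \prec + \succ$ is then automatic from the three axioms.

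The main obstacle, in my view, is not conceptual but the careful bookkeeping in the duoidal setting: one must justify that the bifunctoriality and strict associativity of $\oboxtimes$ genuinely allow one to slide $a \oboxtimes b \oboxtimes c$ across $\nabla$ and the half-coproducts, and that the threefold vertical product $\nabla_{3}$ is unambiguous. Once this interchange-and-associativity calculus is in place, the three half-shuffle axioms are literally the three relations \eqref{eqn:unshufflerel}, so the entire content of the proposition is carried by Proposition \ref{prop:unshufflenc}.
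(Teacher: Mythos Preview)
Your proposal is correct and follows exactly the route the paper intends: the paper states the proposition as a direct corollary of Proposition \ref{prop:unshufflenc} and the relations \eqref{eqn:unshufflerel}, and what you have written is precisely the standard dualization argument that unpacks this corollary. The only difference is that you spell out the interchange and associativity bookkeeping that the paper leaves implicit.
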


\begin{definition}
	\label{def:infinitesimalmorphism}
	An \emph{infinitesimal morphism} $k: T_{\gtimes}(\mathcal{N}\mathcal{C}) \rightarrow T_{\gtimes}(\mathrm{Hom}(B))$ is a bicollections morphism such that
	\begin{equation*}
			k(\{\emptyset\}^{p} \otimes \pi \otimes \{\emptyset\}^{q}) = \mathrm{id}_{B}^{p} \otimes k(\pi) \otimes \mathrm{id}_{B}^{q},~\pi \in \mathrm{NC},~\pi \neq \{\emptyset\},
	\end{equation*}
and equal to zero on elements in $T_{\gtimes}(\mathcal{N}\mathcal{C})$ not of the form above.
\end{definition}
In the following three section we compute the left half-shuffle, the right half-shuffle exponential and the shuffle exponential of a infinitsemal character.
\subsection{Half-shuffle and shuffle exponentials}
\label{sec:halfshuffle}
In this section we compute the half-shuffle and shuffle exponentials of infinitesimal morphisms. Those exponentials are always horizontal algebra morphisms and are compatible with the gap-insertion composition under some hypothesis. The three main results of this section are contained in Proposition \ref{prop:leftshuffle}, \ref{prop:rightshuffle} and \ref{prop:monotonemorphism}.
\subsubsection{Left half-shuffle}
Given an infinitesimal character $k$, we compute the half-shuffle exponential $K=\exp_{\prec}(k)$.
Recall that $K$ is the solution of the following fixed point equation
\begin{equation}
	\label{eqn:fixedpointfree}
	K =  \eta_{\mathrm{Hom}(B)} \circ \varepsilon  + k \prec K,
\end{equation}
and that we put $1_{n}$ for the partition in $\mathrm{NC}(n-1)$ with only one block, $n\geq 2$. If $\pi$ is partition, recall that $\sharp \pi$ denotes the number of blocks of $\pi$.
\begin{proposition}
	\label{prop:leftshuffle}
	With the notation introduced so far the left half-shuffle exponential $\exp_{\prec}(k)$ of an infinitesimal morphism $k$ is an horizontal morphism. Beside $K$ is a PROS morphism if and only if
	\begin{equation}
		k(1_{n})\circ_{n}k(1_{m})=k(1_{m})\circ_{1}k(1_{n})
	\end{equation}
	and $k(\pi)=0 \textrm{ if } \sharp \pi > 1$.
\end{proposition}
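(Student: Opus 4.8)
The plan is to establish the two assertions in turn, using throughout the fixed point equation~\eqref{eqn:fixedpointfree}, namely $K=\eta_{\mathrm{Hom}(B)}\circ\varepsilon+k\prec K$, together with the coproduct formula of Proposition~\ref{prop:formulacoproduct} and Definition~\ref{def:unshufflenc}. Write (a) for the commutation relation $k(1_{n})\circ_{n}k(1_{m})=k(1_{m})\circ_{1}k(1_{n})$ and (b) for the vanishing $k(\pi)=0$ whenever $\sharp\pi>1$. That $K=\exp_{\prec}(k)$ is a horizontal morphism I would deduce from the general correspondence, in a co-nilpotent unshuffle $\oboxtimes\gtimes$-bialgebra, between infinitesimal morphisms and horizontal algebra morphisms (the duoidal analogue of ``the left half-shuffle exponential of an infinitesimal character is a character''). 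Concretely, since $T_{\gtimes}(\mathcal{N}\mathcal{C})$ is such a bialgebra by Proposition~\ref{prop:unshufflenc}, the compatibility conditions of Definition~\ref{def:unshufflebialgebra}---that $\Delta$ is horizontal and that $\Delta^{+}_{\prec},\Delta^{+}_{\succ}$ satisfy the module relations~\eqref{eqn:modulco}---allow one to prove, by induction on the number of blocks and using co-nilpotency, that the unique solution $K$ of~\eqref{eqn:fixedpointfree} is multiplicative for the concatenation product.

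Granting this, $K$ is a PROS morphism if and only if it is moreover compatible with the vertical product $\nabla$, that is, if and only if its restriction to single partitions is an operadic morphism $\mathcal{N}\mathcal{C}\to\mathrm{Hom}(B)$. Expanding the fixed point equation with the coproduct formula of Definition~\ref{def:unshufflenc} gives, for every $\pi$ with $\sharp\pi\geq1$,
\[
K(\pi)=\sum_{\substack{(L,U)\in cut(\pi)\\ 1\in L}}\nabla_{\mathrm{Hom}(B)}\bigl(k(L)\oboxtimes K(U)\bigr),
\]
the boundary cut $L=\pi$ contributing the summand $k(\pi)$. In particular $1_{n}$ is primitive---its only cut with $1\in L$ is the boundary one---so $K(1_{n})=k(1_{n})$. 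The argument then hinges on one combinatorial fact: for any non-crossing partition $\pi$ the block $B_{1}$ containing the minimal element is an outer block, hence $\{B_{1}\}$ is the unique single-block lowerset with $1\in L$, and $\pi=\nabla(\{B_{1}\}\oboxtimes B_{1}^{\gtimes})$ is exactly the operadic decomposition of $\pi$ obtained by peeling off $B_{1}$.

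For the \emph{if} direction, assume (a) and (b). Relation (a) is the image under $k$ of the defining relation $1_{m}\circ_{m}1_{n}=1_{n}\circ_{1}1_{m}$ of $\mathcal{N}\mathcal{C}$, so by the presentation of the gap-insertion operad by the generators $1_{n}$ there is a unique operadic morphism $\tilde K$ with $\tilde K(1_{n})=k(1_{n})$; let $F=T_{\gtimes}(\tilde K)$ be its horizontal extension, a PROS morphism by construction. By (b) the term $k(L)$ vanishes unless $L$ is a single block, so the only surviving summand in the displayed expansion of $(k\prec F)(\pi)$ is the one indexed by $\{B_{1}\}$; hence $(k\prec F)(\pi)=\nabla_{\mathrm{Hom}(B)}(k(\{B_{1}\})\oboxtimes F(B_{1}^{\gtimes}))=\nabla_{\mathrm{Hom}(B)}(F(\{B_{1}\})\oboxtimes F(B_{1}^{\gtimes}))=F(\pi)$, the last step being operadicity of $F$ applied to $\pi=\nabla(\{B_{1}\}\oboxtimes B_{1}^{\gtimes})$. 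Thus $F$ solves~\eqref{eqn:fixedpointfree}, and by uniqueness of the fixed point $K=F$ is a PROS morphism.

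For the \emph{only if} direction, operadicity of $K$ together with $K(1_{n})=k(1_{n})$ yields (a) at once. For (b) I would induct on $\sharp\pi$: in the displayed expansion of $K(\pi)$ every proper lowerset $L$ with $\sharp L>1$ contributes $k(L)=0$ by the induction hypothesis, leaving $K(\pi)=k(\pi)+\nabla_{\mathrm{Hom}(B)}(k(\{B_{1}\})\oboxtimes K(B_{1}^{\gtimes}))$; comparing with $K(\pi)=\nabla_{\mathrm{Hom}(B)}(K(\{B_{1}\})\oboxtimes K(B_{1}^{\gtimes}))$, valid because $K$ is operadic, forces $k(\pi)=0$. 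The main obstacle I anticipate is precisely the combinatorial identity that $\Delta^{+}_{\prec}$ isolates the outer block of the minimum, i.e. that $\{B_{1}\}$ is the only single-block lowerset meeting $\{1\}$ and that $\nabla(\{B_{1}\}\oboxtimes B_{1}^{\gtimes})=\pi$; this must be extracted carefully from the definitions of lowerset, upperset and the gap-insertion composition. A secondary point requiring care is that horizontality of $K$ (first assertion) is exactly what lets $K(B_{1}^{\gtimes})$ factor over the letters of the word $B_{1}^{\gtimes}$ and so mesh with the operadic composition in the inductive step.
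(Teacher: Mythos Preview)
Your proposal is correct and follows essentially the same approach as the paper. Both arguments establish horizontality by induction on the number of blocks via the co-nilpotent structure, prove the ``if'' direction by showing the operadic extension of the $k(1_n)$ satisfies the fixed-point equation (you phrase this via uniqueness, the paper verifies $K=\phi$ directly), and prove the ``only if'' direction by comparing the operadic decomposition $K(\pi)=\nabla_{\mathrm{Hom}(B)}(K(\{B_1\})\oboxtimes K(B_1^{\gtimes}))$ with the fixed-point expansion and inducting on $\sharp\pi$; your identification of the key combinatorial fact---that the block containing $1$ is outer, hence the unique single-block lowerset meeting $1$---is exactly what the paper uses, and you spell out the recursive step more explicitly than the paper's closing sentence does.
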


\begin{proof}
	Set $K|_{1}$ to be the restriction of $K$ to $\mathcal{N}\mathcal{C}$. We show that the solution $K$ of \eqref{eqn:fixedpointfree} is an horizontal morphism. We do it recursively. Let $\tilde{K}$ be the horizontal morphism extending the values of $K_{|1}$. The two maps $K$ and $\tilde{K}$ agree on words on partitions with  no non-empty blocks, since in that case $K(\{\emptyset\}^{q}) = \tilde{K}(\{\emptyset\}^{q}) = \left(T_{\gtimes}(\eta_{\mathrm{Hom}(b)})\circ \varepsilon\right) (\{\emptyset^{q}\})$. Assume next that $K$ and $\tilde{K}$ agree on words of partitions with a total number of non-empty blocks at most equal to $N\geq 1$. Pick a word on partitions with $N+1$ blocks and write $w=\emptyset^{p}\pi\tilde{w}$, with $\pi \neq \{\emptyset\}$ and $|\tilde{w}|$ a word of length $s$. Let $V$ be the block of the partition associated with $\pi$ that contains $1$. Then by definition of an infinitesimal character, we get
	\begin{equation*}
		K(w) = (k \prec K)(w)
		=  \sum_{\substack{(L,U) \in cut(\pi)\\ 1\in L}} \nabla_{\mathrm{Hom}(B)} \bigg(\{\emptyset\}^{p} k(L)\{\emptyset\}^{s}\oboxtimes K(\{\emptyset\}^{p}U \tilde{w})\bigg).
	\end{equation*}
	Since the number of non-empty blocks of $U$ and $U\tilde{w}$ is less than the number of non-empty blocks of $w$, we get
	\begin{equation*}
		K(w) = \sum_{\substack{(L,U) \in Cut(\pi) \\ 1 \in L}} \textrm{id}_{B}\,\nabla_{\mathrm{Hom}(B)}\bigg(k(L)\oboxtimes \tilde{K}(U)\bigg)\,\tilde{K}(\tilde{w})
		= \textrm{id}_{B}(k\prec K)(\pi)\tilde{K}(\tilde{w}) = \tilde{K}(w).
	\end{equation*}

	Next, Assume $k(\pi)=0 $ if $\sharp \pi > 1$ and $k(1_{n})\circ_{n}k(1_{m})=k(1_{m})\circ_{1}k(1_{n})$. Let $\phi:\mathcal{NC} \rightarrow \mathrm{Hom}(B)$ be the operadic morphism extending the values $k(1_{n}),~n\geq 1$. If $\pi$ is a partition with only one block, then
	\begin{equation}
		K(\pi)
		=  (\eta_{\mathrm{Hom}(B)} \circ \varepsilon)(\pi) + (k \prec K)(\pi)
		= 0 + k(\pi) \circ (K(\emptyset^{|\pi|})) = k(\pi)=\phi(\pi).
	\end{equation}
	Assume that the result holds for words on partitions with at most $N$ blocks, $K(\pi_{1}\cdots\pi_{p}) = \phi(\pi_{1}\cdots\pi_{p})$ for every element $\pi_{1}\cdots\pi_{p} \in T_{\gtimes}(\mathcal{N}\mathcal{C})$ with $\sharp{\pi_{1}} + \cdots + \sharp{\pi_{p}} \leq N$.
	Let $\pi$ be a partition with $N+1$ blocks. We denote by $V$ the block of $\pi$ that contains $1$. With this notation, we have
	\begin{align*}
		K(\pi) = (\eta_{\mathrm{Hom}(B)} \circ \varepsilon)(\pi) + k \prec K (\pi)
		       =\sum_{(L,U) \in Cut(\pi)}\nabla_{\mathrm{Hom}(B)}(k(L) \oboxtimes K(U))
		       &= \nabla_{\mathrm{Hom}(B)}(k(1_{\sharp V}) \oboxtimes \phi(U)) \\ &= \phi(\pi).
	\end{align*}
	The last equality follows by application of the recursive hypothesis since $\sharp U \leq N$. Now assume that the solution $K_{|1}$ is an operadic morphism. Let $\pi \neq \emptyset$ be a non-crossing partition.
	\begin{align*}
		K(\pi) & = K(1_{\sharp V}) \circ (K(\pi_{0}),\ldots,K(\pi_{|V|})) = (k \prec K)(\pi)                                                                                                       \\
		       & = \sum_{(L,U) \in Cut(\pi)}\nabla_{\mathrm{Hom}(B)}(k(L) \oboxtimes K(U))(\pi)                                                                                                    \\
		       & =\nabla_{\mathrm{Hom}(B)}(k(1_{\sharp V}) \oboxtimes (K(\pi_{1}) \gtimes \cdots \gtimes K(\pi_{|V|}))) + \sum_{ L \neq \{V\} }\nabla_{\mathrm{Hom}(B)}(k(L) \oboxtimes K(U))(\pi).
	\end{align*}
	This last equality implies $\sum_{ L \neq \{V\} }\nabla_{\mathrm{Hom}(B)}(k(L) \oboxtimes K(U))(\pi) = 0$. A simple recursive argument on the number of blocks ends the proof.
\end{proof}

\subsubsection{Right half-shuffle exponential}
Given an infinitesimal character $b: T_{\gtimes}(\mathcal{N}\mathcal{C}) \rightarrow T_{\gtimes}(\mathrm{Hom}(B))$, we compute the right half-shuffle exponential $\exp_{\succ}(b)$
Recall that $\exp_{\succ}(b)$ is the unique solution of the following fixed point equation:
\begin{equation}
	\label{eqn:fixedpointfree2}
	B = \eta_{\mathrm{Hom}(B)} \circ \varepsilon +  B \succ b.
\end{equation}
Let $\pi$ be a non-crossing partition. The \emph{adjacency forest} $\tau(\pi)$ of $\pi$ encodes nesting of the blocks of $\pi$. To each block of $\pi$ we associate a vertex. Two blocks are connected in $\tau(\pi)$ if the convex hull of one of the block contains the other block. The root of each tree in $\tau(\pi)$ is a block not contained in any other block. In particular, the adjacencey forest of an irreductible partition (see \cite{arizmendi2015relations}) is a tree.

We say that an horizontal morphism $B:T_{\gtimes}(\mathcal{N}\mathcal{C})\rightarrow T_{\gtimes}(\mathrm{Hom}(B))$ is \emph{boolean} if it is equal to zero on any non-crossing partitions with at least two nested blocks. Those partitions have an adjacency forest with at least one tree containing two vertices. In addition, if $I_{1}\cdots I_{p}$ is an interval partition, we require that:
\begin{equation}
	B(I_{1}\cdots I_{p}) = \Big(\cdots \Big(B(I_{p})\circ_{1}B(I_{p-1})\cdots \Big) \circ_{1}  B(I_{2})\Big)\circ_{1} B(I_{1})\Big).
\end{equation}

\begin{proposition}
	\label{prop:rightshuffle}
	With the notation introduced so far, the bicollection morphism $B$ solution of the fixed point equation \eqref{eqn:fixedpointfree2} is a horizontal morphism. Besides, $B$ is boolean if and only if
	$$b(1_{n})\circ_{n}b(1_{m}) = b(1_{m})\circ_{1}b(1_{n})$$ and $ b(\pi)=0 \textrm{ if } \sharp \pi > 1$.
\end{proposition}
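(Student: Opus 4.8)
The plan is to transpose, step by step, the two-part argument used for the left half-shuffle in the proof of Proposition~\ref{prop:leftshuffle}, replacing the left half-shuffle $\prec$ and its coproduct $\Delta_{\prec}$ by their right-handed counterparts $\succ$ and $\Delta_{\succ}$ throughout.

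\emph{Step 1: $B$ is a horizontal morphism.} Let $\tilde{B}$ be the horizontal (concatenation) morphism whose restriction to $\mathcal{N}\mathcal{C}$ coincides with that of $B$, and I prove $B=\tilde{B}$ by induction on the total number of non-empty blocks of a word. For $\{\emptyset\}^{q}$ both maps equal $\eta_{\mathrm{Hom}(B)}\circ\varepsilon$, since $b$ is infinitesimal and hence $(B\succ b)(\{\emptyset\}^{q})=0$. For the inductive step I write a word as $w=\{\emptyset\}^{p}\pi\tilde{w}$ and expand $B(w)=(B\succ b)(w)=\nabla_{\mathrm{Hom}(B)}\circ(B\oboxtimes b)\circ\Delta_{\succ}(w)$ using the extension formula \eqref{eqn:extension}. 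Because $b$ is supported on padded single partitions (Definition~\ref{def:infinitesimalmorphism}), in every surviving term the upperset fed to $b$ is a single block, while $B$ is evaluated on a lowerset together with $\tilde{w}$, an argument with strictly fewer non-empty blocks; the induction hypothesis then replaces $B$ by $\tilde{B}$ and the term recombines into $\tilde{B}(w)$.

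\emph{Step 2: the equivalence.} For the ``if'' direction I assume $b(1_{n})\circ_{n}b(1_{m})=b(1_{m})\circ_{1}b(1_{n})$ and $b(\pi)=0$ whenever $\sharp\pi>1$, so that $b$ is non-zero only on the single-block partitions $1_{n}$ (up to empty padding). Given a partition $\pi$, I expand $B(\pi)=(B\succ b)(\pi)=\sum_{(L,U)\in cut(\pi),\,1\in U}\nabla_{\mathrm{Hom}(B)}(B(L)\oboxtimes b(U))$. The constraints $1\in U$ and $b$ supported on $1_{n}$ force $U$ to be a single block, necessarily the block containing $1$ and carrying no nested block; so the recursion can only ever peel such an outer interval block, and one checks that if $\pi$ possesses two nested blocks the block containing $1$ can never be isolated as a single $1_{n}$, whence $B(\pi)=0$. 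On an interval partition $I_{1}\cdots I_{p}$ the same recursion peels the block containing $1$ first and, by induction, reproduces exactly the iterated partial composition appearing in the definition of a boolean morphism, the commutation relation guaranteeing that the result is independent of the order in which outer blocks are taken. For the ``only if'' direction I use that $b$ is uniquely recovered from $B$ as its infinitesimal generator (the $\succ$-logarithm), supported on single partitions: comparing the two sides of the fixed-point equation order by order in the number of blocks, the vanishing of $B$ on nested partitions propagates to $b(\pi)=0$ for $\sharp\pi>1$, while evaluating the boolean product formula on the two presentations $1_{n}\circ_{n}1_{m}=1_{m}\circ_{1}1_{n}$ of the same partition yields the commutation relation.

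The step I expect to be the main obstacle is the ``if'' direction of Step~2: one must show not merely that an outer interval block can be peeled, but that \emph{every} surviving term of the recursion on a genuinely nested partition cancels, and at the same time that on interval partitions the iterated $\circ_{1}$-composition emerges with the correct ordering. This is precisely where the relation $b(1_{n})\circ_{n}b(1_{m})=b(1_{m})\circ_{1}b(1_{n})$ is used essentially, mirroring the way its left-handed analogue was used in the proof of Proposition~\ref{prop:leftshuffle} to force the off-diagonal sum $\sum_{L\neq\{V\}}$ to vanish.
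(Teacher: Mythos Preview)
Your overall plan matches the paper's proof, which indeed transposes Proposition~\ref{prop:leftshuffle} to the right half-shuffle. Step~1 is essentially identical to the paper's argument.

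In Step~2 (``if'' direction) there is a genuine gap in your handling of the nested case. Your claim ``if $\pi$ possesses two nested blocks the block containing $1$ can never be isolated as a single $1_{n}$'' is false as stated: if the block $V$ containing $1$ happens to be a leaf of the adjacency forest (no block nested inside it) while the nesting occurs elsewhere in $\pi$, then $V$ \emph{can} be isolated as the upperset and the recursion does produce a nonzero term $\nabla_{\mathrm{Hom}(B)}(B(\pi\setminus V)\oboxtimes b(V))$. The paper handles this by a case split: if $V$ contains a nested block the sum is empty and $B(\pi)=0$ directly; if $V$ is a leaf but $\pi\setminus V$ still has nesting, one invokes the induction hypothesis to conclude $B(\pi\setminus V)=0$. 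You need this two-step argument, not a single-shot claim.

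A second point: you expect the commutation relation $b(1_{n})\circ_{n}b(1_{m})=b(1_{m})\circ_{1}b(1_{n})$ to be used in the ``if'' direction to cancel an off-diagonal sum, by analogy with the $\sum_{L\neq\{V\}}$ argument in Proposition~\ref{prop:leftshuffle}. In fact no such cancellation is needed here: since $b$ is supported on single-block partitions and $\Delta_{\succ}$ forces $1\in U$, the upperset in every surviving term is uniquely determined (it is the block containing $1$, provided that block is a leaf), so on an interval partition $I_{1}\cdots I_{p}$ the recursion deterministically peels $I_{1}$ first, then $I_{2}$, and so on, with no ordering ambiguity to resolve. The paper's proof of the ``if'' direction does not invoke the commutation relation at all; its role is confined to the ``only if'' direction (which the paper leaves implicit), where your sketch via the two presentations $1_{n}\circ_{n}1_{m}=1_{m}\circ_{1}1_{n}$ is the right idea.
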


\begin{proof}
	The proof is very similar to the free case. Let $\tilde{B}$ be the horizontal morphism extending the values of $\overline{b}$. The two maps $B$ and $\tilde{B}$ agree on words on partitions with at most $1$ non-empty block. Assume that $B$ and $\tilde{B}$ agree on words on partitions with at most $N$ non-empty blocks.

	Pick a word on partitions with $N+1$ blocks and write $w=\emptyset^{p}\pi\tilde{w}$, with $\pi \neq \{\emptyset\}$ and $\tilde{w}$ a word of length $s$. Let $V$ be the block of the partition associated with $\pi$ that contains $1$. Then by definition of an infinitesimal morphism, we get
	\begin{equation*}
		B(w) = (\underline{k} \prec K)(w) =  \sum_{\substack{(L,U) \in Cut(\pi)\\ 1\not\in L}} \nabla_{\mathrm{Hom}(b)}\bigg(B(\{\emptyset\}^{p} L\tilde{w})\oboxtimes b(\{\emptyset\}^{p}U\{\emptyset\}^{|\tilde{w}|}) \bigg).
	\end{equation*}

	Since the number of non-empty blocks of $L\tilde{w}$ and $L$ is less than the number of non-empty blocks in $w$, we get:
	\begin{equation*}
		B(w) = \sum_{\substack{(L,U) \in Cut(\pi)\\ 1\not\in L}} \textrm{id}_{B}^{p}\nabla_{\mathrm{Hom}(b)}\bigg(\tilde{B}(L)\oboxtimes b(U) \bigg) \tilde{B}(\tilde{w})
		=\textrm{id}_{B}^{p}(B \succ b)(\pi) \tilde{B}(\tilde{w})= \tilde{B}(w).
	\end{equation*}

	We assume that $b(\pi)=0 $ if $\sharp \pi > 1$. Let $\phi$ be the boolean morphism that extends the values $b(1_{n})$, $n \geq 1$. We show recursively on the total number of non-empty blocks of word on partitions in $T_{\gtimes}(\mathcal{N}\mathcal{C})$ that $B = T_{\gtimes}(\phi)$. First, the two maps coincide on words on partitions with a total number of non-empty blocks less than one. Let $N \geq 1$ and assume that $T_{\gtimes}(\phi)$ and $B$ are equal on multi-partition with at most $N$ blocks. Pick $\pi$ a partition with $N+1$ blocks. Assume first that the \emph{adjacency forest} of $\pi$ contains at least one tree not equal to the root.
	\begin{align}
		\label{prob:booleanexpo::proof::equation1}
		B(\pi) = (B \succ b)(\pi)
		= \sum_{ 1 \not\in L \in \lmss{Lo}(\pi)} \nabla_{\mathrm{Hom}(B)}({B}(L) \oboxtimes b(U))
	\end{align}

	A cut of the partition $\pi$ corresponds to an admissible cut of its adjacency tree. Since $\overline{b}(U) = 0$ if $U$ is a word on partitions either containing at least two non-empty partitions or equal to some $\emptyset^{p}, p \geq 1$, the cuts that contribute to the sum on the righthand side of \eqref{prob:booleanexpo::proof::equation1} extract one and only one leaf of the adjacency forest. Hence, if the block $V$ of $\pi$ that contains $1$ contains at least another block in its convex hull, $B(\pi)=0$. Assume the opposite. It implies that the partition $\pi \backslash V$ is not an interval partition (and is not empty). Besides,
	$$
		B(\pi) = \nabla_{\mathrm{Hom}(b)}(b(V) \oboxtimes B(\emptyset \gtimes\pi \backslash V))
	$$
	The induction hypothesis implies $B(\emptyset \gtimes\pi \backslash V)) = 0$. Now suppose that $\pi = I_{1} \cdots I_{p}$ is an interval partition.
	\begin{equation*}
		B(\pi) = \nabla_{\mathrm{Hom}(b)}(b(I_{1}) \oboxtimes B(I_{2} \cdots I_{p})).
	\end{equation*}
	We apply the recursive hypothesis on $B(I_{2} \cdots I_{p})$ to end the proof.
\end{proof}
\subsubsection{Shuffle exponential}
\label{ssec:shuffleexp}

In this section we compute the shuffle exponential \eqref{eqn:shuffleexp}. The restriction of this horizontal morphism to non-crossing partitions (operators with one output) is not compatible in any way, to the extent of our knowledge with the operation of gap-insertion. This boils down to the fact that the tree factorial defined hereafter is not multiplicative.

\begin{definition}[Monotone partition]
	Let $\pi$ a partition with $k$ blocks. An admissible labelling of the blocks by integers in $\llbracket 1,k\rrbracket$ is an injective labelling which is increasing with respect to the nesting preoder on the blocks: If a block $V \in \pi$ is contained in the convex hull of a block $W$ in $\pi$ then the label of $V$ is less than the label of $W$. A partition with an admissible labelling of its blocks is called a monotone partition. The set of all monotone partitions is denoted $\mathrm{NC}_{m}$.
\end{definition}

\begin{definition}[Tree factorial, \cite{arizmendi2015relations}, Definition 3.2]
	The tree factorial $t!$ of a rooted tree $t$ is recursively defined as follows. Let $t$ be a rooted tree with $n > 0$ vertices. It $t$ consists of a single vertex, set $t!=1$. Otherwise $t$ can be decomposed into its root vertex and branches $t_{1},\ldots,t_{r}$ and we defined recursively the number
	\begin{equation*}
		t! = n \cdot t_{1}!\cdots t_{k}!
	\end{equation*}
	The tree factorial of a forest is the product of the factorials of the constituting trees.
\end{definition}

\begin{proposition}[\cite{arizmendi2015relations}, Proposition 3.3]
	The number $m(\pi)$ of monotone labellings of a non-crossing partition depends only on its adjacency forest $\tau(\pi)$ and is given by $m(\pi)=\frac{\sharp \pi !}{\tau(\pi)!} $
\end{proposition}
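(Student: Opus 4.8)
The plan is to recognise a monotone labelling as a linear extension of a forest-shaped poset, and then to prove the hook-length identity $m(\pi)=\sharp\pi!/\tau(\pi)!$ by an induction that mirrors the recursive definition of the tree factorial.

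First I would make the order on blocks explicit. Given $\pi$ with $k=\sharp\pi$ blocks, order them by nesting: set $V\preceq W$ whenever $V$ is contained in the convex hull of $W$. Because $\pi$ is non-crossing, two blocks whose convex hulls meet are automatically nested, so above any fixed block the englobing blocks form a chain; hence $\preceq$ is a partial order in which every non-outer block has a unique minimal englobing block (its parent), the maximal elements are the outer blocks, and the Hasse diagram of $(\pi,\preceq)$ is precisely the adjacency forest $\tau(\pi)$. By the definition of a monotone partition, an admissible labelling is exactly an injective, strictly $\preceq$-increasing bijection of the blocks onto $\llbracket 1,k\rrbracket$, i.e. a linear extension of $(\pi,\preceq)$. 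Since the number of linear extensions of a poset is an invariant of the poset and $(\pi,\preceq)$ depends only on $\tau(\pi)$, the number $m(\pi)$ depends only on $\tau(\pi)$; this is the first assertion, and it remains to count the extensions.

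Writing $\ell(F)$ for the number of linear extensions of a rooted forest $F$ (with the convention descendant $\preceq$ ancestor), so that $m(\pi)=\ell(\tau(\pi))$, I would use two elementary recursions. If $F=t_{1}\sqcup\cdots\sqcup t_{r}$ decomposes into its trees with $|t_{i}|=n_{i}$ and $k=\sum_{i}n_{i}$, then a linear extension is determined by a choice of which labels fall in each tree together with an independent extension of each tree, giving
\[
  \ell(F)=\binom{k}{n_{1},\dots,n_{r}}\prod_{i=1}^{r}\ell(t_{i}).
\]
If $t$ is a single tree with $n$ vertices, root $\rho$ and branches $t_{1},\dots,t_{s}$, then $\rho$ is the unique maximal element and must receive the top label $n$; deleting it leaves the branch forest, so $\ell(t)=\ell(t_{1}\sqcup\cdots\sqcup t_{s})$. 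An induction on the number of vertices then closes the argument: assuming $\ell(t_{j})=|t_{j}|!/t_{j}!$, the tree recursion gives $\ell(t)=\binom{n-1}{|t_{1}|,\dots,|t_{s}|}\prod_{j}\ell(t_{j})=(n-1)!/\prod_{j}t_{j}!=n!/\big(n\prod_{j}t_{j}!\big)=n!/t!$, using $t!=n\prod_{j}t_{j}!$; and the forest recursion gives $\ell(F)=\binom{k}{n_{1},\dots,n_{r}}\prod_{i}n_{i}!/t_{i}!=k!/\prod_{i}t_{i}!=k!/F!$. Specialising to $F=\tau(\pi)$ yields $m(\pi)=\sharp\pi!/\tau(\pi)!$.

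The only genuinely non-routine point, and the step I would treat most carefully, is the structural claim in the first paragraph that the nesting relation is a forest order whose Hasse diagram is $\tau(\pi)$: this is exactly where non-crossingness is used, since it forbids two blocks from interleaving and thereby forces the englobing blocks of any given block to be linearly ordered, producing the unique-parent (tree) structure. Once this is in place, the counting is the standard forest hook-length formula, repackaged through the tree-factorial recursion; in particular the displayed equalities expanding the binomial and tree factorials are purely arithmetic.
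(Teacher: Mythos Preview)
Your proof is correct. The paper does not supply its own argument for this proposition: it is quoted verbatim from \cite{arizmendi2015relations} (Proposition 3.3) and used as a black box in the computation of the shuffle exponential, so there is nothing to compare against beyond the original reference.

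Your approach is the standard one and matches the argument in \cite{arizmendi2015relations}: identify a monotone labelling with a linear extension of the nesting poset, observe that non-crossingness forces this poset to be a forest (the adjacency forest), and then count linear extensions by the forest hook-length recursion. Your handling of the orientation is consistent with the paper's convention that inner blocks receive smaller labels, so the root (outer block) is $\preceq$-maximal and takes the top label. One small wording issue: the paper defines the nesting relation via $V_1\to_\pi V_2$ when $\mathrm{Conv}(V_2)\cap V_1\neq\emptyset$ and calls it a preorder; your claim that it is in fact a partial order (antisymmetry) is correct for non-crossing partitions but deserves the one-line justification you gesture at, since two distinct blocks with the same convex hull cannot coexist in a non-crossing partition.
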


Let $m: T_{\gtimes}(\mathcal{N}\mathcal{C})\rightarrow T_{\gtimes}(\mathrm{Hom}(B))$ be an infinitesimal morphism and define the shuffle exponential by
\begin{equation}
	\label{eqn:shuffleexp}
	\exp_{\star}(m) = \eta_{\mathrm{Hom}(B)} \circ \varepsilon + \sum_{p\geq 1} \frac{1}{p!}m^{\star p}.
\end{equation}

\begin{proposition}
	\label{prop:monotonemorphism}
	Pick $m:T_{\gtimes}(\mathcal{N}\mathcal{C})\rightarrow T_{\gtimes}(\mathcal{N}\mathcal{C})$ an infinitesimal morphism such that:
	\begin{equation}
		m(1_{n}) \circ_{1} m(1_{m}) = m(1_{m}) \circ_{m+1} m(1_{n})
	\end{equation}
	with $m(\pi) = 0$ if $\sharp \pi > 1$. Then, $\exp_{\star}$ is an horizontal morphism and
	\begin{equation*}
		\exp_{\star}(m)(\pi)
		= \frac{1}{\tau(\pi)!} \exp_{\prec}(m)(\pi),~\pi \in \mathcal{N}\mathcal{C}.
	\end{equation*}
\end{proposition}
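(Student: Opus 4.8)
The plan is to reduce the whole statement to the action of $\exp_\star(m)$ on a single non-crossing partition, exploiting that $m$ is infinitesimal and vanishes on partitions with more than one non-empty block. First I would observe that for a partition $\pi$ with $\sharp\pi = p$ blocks only the summand $\tfrac{1}{p!}m^{\star p}$ of \eqref{eqn:shuffleexp} contributes to $\exp_\star(m)(\pi)$. Indeed, writing the $q$-fold convolution as $m^{\star q}=\nabla^{(q-1)}_{\mathrm{Hom}(B)}\circ m^{\oboxtimes q}\circ \Delta^{(q-1)}$, and recalling that $m$ is nonzero only on a single non-empty block framed by empty partitions (in particular $m(\{\emptyset\}^{k})=0$), the iterated coproduct $\Delta^{(q-1)}(\pi)$ can contribute only through chains $L_{1}\oboxtimes\cdots\oboxtimes L_{q}$ in which each $L_{i}$ carries \emph{exactly} one non-empty block. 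This forces $q=p$, so $\exp_\star(m)(\pi)=\tfrac{1}{p!}\,m^{\star p}(\pi)$.

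Next I would analyse which chains survive in $\Delta^{(p-1)}(\pi)$. By Proposition \ref{prop:formulacoproduct} each elementary cut peels off a lowerset, and a lowerset that is a single block is exactly an outer block; moreover the block containing the smallest remaining element is always outer, since a nested block cannot contain the global minimum. Hence the surviving chains are in bijection with the sequences of successive outer-block removals of $\pi$, i.e.\ with the linear extensions of the nesting forest read from the roots. By the proposition recalled above (\cite{arizmendi2015relations}, Proposition 3.3) the number of such sequences is exactly $m(\pi)=\tfrac{\sharp\pi!}{\tau(\pi)!}$.

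The heart of the argument is to show that each of these $m(\pi)$ chains evaluates, under $\nabla^{(p-1)}_{\mathrm{Hom}(B)}\circ m^{\oboxtimes p}$, to one and the same element of $\mathrm{Hom}(B)$, namely $\exp_\prec(m)(\pi)$. I would first identify the value of the \emph{canonical} chain, the one that always removes the outer block containing the smallest element, with $\exp_\prec(m)(\pi)$: this is precisely the nested operadic composition produced by the fixed-point equation $K=\eta_{\mathrm{Hom}(B)}\circ\varepsilon+m\prec K$ of Proposition \ref{prop:leftshuffle}, since $\Delta_\prec$ selects exactly the cut with $1\in L$. Any two admissible removal sequences are then connected by finitely many transpositions of two blocks that are incomparable for the nesting preorder and consecutive in the sequence; each such transposition swaps the order in which two side-by-side outer blocks are extracted, and the two resulting operadic compositions coincide precisely because of the hypothesis $m(1_{n})\circ_{1}m(1_{m})=m(1_{m})\circ_{m+1}m(1_{n})$. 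Combining, $m^{\star p}(\pi)=m(\pi)\,\exp_\prec(m)(\pi)$, so that $\exp_\star(m)(\pi)=\tfrac{1}{p!}\cdot\tfrac{\sharp\pi!}{\tau(\pi)!}\,\exp_\prec(m)(\pi)=\tfrac{1}{\tau(\pi)!}\exp_\prec(m)(\pi)$, using $p=\sharp\pi$.

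For the horizontal-morphism assertion I would note that the adjacency forest of a concatenation of partitions is the disjoint union of the individual forests, so $\tau(\cdot)!$ is multiplicative for the concatenation product, while $\exp_\prec(m)$ is a horizontal morphism by Proposition \ref{prop:leftshuffle}; the pointwise identity just established therefore propagates from single partitions to arbitrary words, whence $\exp_\star(m)$ is a horizontal morphism. The step I expect to be the main obstacle is the order-independence claim: one must verify both that the linear extensions of $\tau(\pi)$ are all connected through transpositions of incomparable consecutive blocks, and, more delicately, that each admissible transposition matches exactly the index pattern $\circ_{1}$ versus $\circ_{m+1}$ of the commutation relation, which requires careful bookkeeping of how the empty-partition padding shifts the gap positions at each stage of the iterated composition.
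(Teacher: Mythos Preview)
Your approach coincides with the paper's. Both arguments (i) observe that only the $p$-fold convolution with $p=\sharp\pi$ survives because $m$ is infinitesimal and vanishes on multi-block partitions, (ii) identify the surviving chains in the iterated coproduct with monotone labellings of $\pi$ (equivalently, linear extensions of the nesting forest), (iii) invoke the count $m(\pi)=\sharp\pi!/\tau(\pi)!$ from \cite{arizmendi2015relations}, and (iv) conclude. The paper in fact stops at the formula $\exp_{\star}(m)(\pi)=\tfrac{1}{k!}\sum_{(\pi,\ell)}\nabla^{\oboxtimes k}(\cdots)$ and simply asserts that the result follows; you are more explicit in supplying the missing step, namely that every monotone chain evaluates to the same operator $\exp_{\prec}(m)(\pi)$, using the commutation hypothesis to pass between adjacent transpositions of incomparable blocks. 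Your treatment of the horizontal-morphism claim via multiplicativity of $\tau(\cdot)!$ on concatenations is likewise a reasonable expansion of what the paper leaves implicit; just make sure that when you say the identity ``propagates to words'' you mean that the same chain-counting analysis applies verbatim to a word $w=\pi_1\cdots\pi_s$, not that you are invoking the horizontal-morphism property you are trying to prove.
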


\begin{proof}
	Let $\pi$ be a non-crossing partition with $k$ blocks.
	The number of admissible labelings of the partition $\pi$ is equal to $\frac{k!}{\tau(\pi)!}$. Hence, to prove the statement, it is sufficient to show that
	\begin{equation*}
		\exp_{\star}(m)(\pi)
		= \frac{1}{k!} \sum_{\pi \in \mathcal{N}\mathcal{C}_{m}} \exp_{\prec}(m)(\pi).
	\end{equation*}
	To that end, we show first that there exists a natural embedding of the set of admissible labelings of a partition into the set of multiple admissible cuts of a partition. A multiple cuts of a partition $\pi$ is a sequence $\left(L_{1},\ldots,L_{s}\right)$ of (possibly empty) subsets of blocks of $\pi$ such that $L_{i}$ is a lower cut of $L_{i-1}$ with the convention $L_{0} = \pi$.
	For such a multiple cut of $\pi$, we denote by $L_{i} \backslash L_{i-1} $ the words on partition in $T_{\gtimes}(\mathcal{N}\mathcal{C})$ such that
	\begin{equation*}
		\nabla(L_{i} \oboxtimes (L_{i-1}\backslash L_{i})) = L_{i-1}.
	\end{equation*}
	Let $(\pi,\ell)$ be a monotone partition.  We associate to the labelling $\ell$ of the block a multiple cut $\lmss{L}(\pi,\ell)$ of $\pi$ as follows. For each integer $i \in \llbracket 1,k \rrbracket$, we denote by $V_{i}$ the block of $\pi$ labelled with the integer $i$. We define recursively $\lmss{L}(\pi,\ell)$ by the following rule:
	\begin{equation*}
		\lmss{L}(\pi,\ell)_{0}=\pi,~\lmss{L}(\pi,\ell)_{i} = \lmss{L}(\pi,\ell)_{i-1} \backslash V_{i}.
	\end{equation*}
	Because the labelling $\ell$ is monotone, we obtain indeed a multiple cut of $\pi$. Next, from the definition of the coproduct $\Delta$, we see that:
	\begin{equation*}
		\exp_{\star}(m)(\pi) = \sum_{s\geq1}\sum_{(L_{1},\ldots,L_{s})} \frac{1}{s!} \nabla_{\mathrm{Hom}(B)}^{\oboxtimes s}(m(L_{s-1} \backslash L_{s}) \oboxtimes \cdots \oboxtimes m(L_{0} \backslash L_{1})),
	\end{equation*}
	with $\nabla_{\mathrm{Hom}(B)}^{\oboxtimes s}$ defined recursively by $\nabla_{\mathrm{Hom}(B)}^{\oboxtimes 1} = \nabla_{\mathrm{Hom}(B)}$ and $\nabla_{\mathrm{Hom}(B)}^{\oboxtimes (s+1)}=\nabla_{\mathrm{Hom}(B)}^{\oboxtimes s} \oboxtimes \textrm{id}\circ \nabla_{\mathrm{Hom}(B)} $. From the definition of an infinitesimal character, the sum on the right hand side of the last equation reduces to
	\begin{equation*}
		\exp_{\star}(m)(\pi) = \sum_{(\pi,\ell) \in \mathcal{N}\mathcal{C}_{m}}\frac{1}{k!} \nabla_{\mathrm{Hom}(B)}^{\oboxtimes s}(m(\lmss{L}(\pi,\ell)_{s-1} \backslash \lmss{L}(\pi,\ell)_{s}) \oboxtimes \cdots \oboxtimes m(\lmss{L}(\pi,\ell)_{0} \backslash \lmss{L}(\pi,\ell)_{1})).
	\end{equation*}
	The result follows from the last equation.
\end{proof}

\section{Operator-valued moment-cumulant relations}
\subsection{Operad of words insertions}
\label{sec:splitting}
In this section, we introduce an \raisebox{-1pt}{$\oboxtimes$}$\gtimes$ unshuffle Hopf algebra (see Definition \ref{def:unshufflebialgebra}) associated with an operad of words on random variables (defined hereafter). We proceed with the definition of a \emph{splitting map} from this unshuffle Hopf algebra to the unshuffle Hopf algebra of words on non-crossing partitions we defined in the previous section. This map has already been defined in \cite{ebrahimi2016splitting} for the scalar case. We adapt the arguments in \cite{ebrahimi2016splitting} to our setting to show that the dual of the splitting map induces a morphism between shuffle algebras.
We prove finally that the operator-valued moment-cumulant relations for free and boolean cumulants are equivalent to two half-shuffle fixed point equations, see Proposition \ref{prop:momentcumulantsrelation}.

In this section, all non-crossing partitions have their legs coloured with elements in the algebra $\mathcal{A}$. We use the same notation NC for the set of all coloured non-crossing partitions. The material exposed in the previous sections extends readily to coloured non-crossing partitions. A generic coloured non-crossing partition is written
$$
	\pi \otimes a_{1}\otimes \cdots \otimes a_{p},~\pi \in \mathrm{NC}(p),~a_{i} \in \mathcal{A}.
$$
We give only sketches of the proofs, if any, and the reader is directed to \cite{ebrahimi2016splitting} where he or she will find detailed proofs readily adapted to our settings
 For the remainder of the section, we come to our (heavier) notations $\Delta_{\prec}^{T_{\gtimes}(\mathcal{N}\mathcal{C})}$, $\Delta_{\succ}^{T_{\gtimes}(\mathcal{N}\mathcal{C})}$ and $\varepsilon^{T_{\gtimes}(\mathcal{N}\mathcal{C})}$ for the unshuffle structure on
$T_{\gtimes}(\mathcal{N}\mathcal{C})$.
We start with the definition of the operad of words insertions. We denote by $T(\mathcal{A})$ the vector space of all non-commutative polynomials on elements in the algebra $\mathcal{A}$,
$$T(\mathcal{A}) = \bigoplus_{n\geq 1} \mathcal{A}^{\otimes n}$$
We augment this space with the empty word $\emptyset$ and set $\bar{T}(\mathcal{A}) = \mathbb{C}\emptyset \oplus T(\mathcal{A})$. A word $w_{1}\cdots w_{p} \in T(\mathcal{A})$ is graded by its length plus one: $$\lmss{i}(w_{1}\cdots w_{p}) = p+1.$$ The empty word has length $0$.

\begin{definition}[Words insertions operad]
To the space $\bar{T}(\mathcal{A})$ is associated the collection $$\bar{T}(\mathcal{A})(n) = \mathcal{A}^{\otimes (n-1)},~n\geq 1$$
The words insertions operadic law $\rho_{\mathcal{W}\mathcal{I}}$ is defined by:
\begin{equation}
	\begin{array}{lccc}
		\rho_{\mathcal{W}\mathcal{I}}: & \bar{T}(\mathcal{A})\sbt\bar{T}(\mathcal{A})                  & \longrightarrow & \bar{T}(\mathcal{A})                    \\
		                               & x_{1}\cdots x_{p} \otimes (y_{1}\otimes\ldots\otimes y_{p+1}) & \mapsto         & y_{1}x_{1}y_{2}x_{2}\cdots x_{p}y_{p+1}
	\end{array}
\end{equation}
The empty word $\emptyset$ acts as the unit for the \emph{word insertion operad}.
\end{definition}
We denote by $\mathcal{W} = T_{\gtimes}(\bar{T}(\mathcal{A}))$ the space of all words on elements of $\bar{T}(\mathcal{A})$, augmented with an element $1$ with $0$ inputs and outputs. We denote by $\nabla_{\mathcal{W}}:\mathcal{W}\oboxtimes\mathcal{W}\rightarrow\mathcal{W}$ the PROS product induced by $\rho_{\mathcal{W}\mathcal{I}}$.
Finally, we set $\Delta^{\mathcal{W}}: \mathcal{W}\rightarrow \mathcal{W}\oboxtimes \mathcal{W}$ the unique horizontal algebra morphism such that:
\begin{equation*}
	\Delta^{\mathcal{W}}(w) = \sum_{\substack{\alpha,\beta \in \mathcal{W}, \\ w = \nabla_{\mathcal{W}}(\alpha \oboxtimes \beta)}} \alpha \oboxtimes \beta,~~ w \in \bar{T}(\mathcal{A}).
\end{equation*}
Owing to associativity of $\nabla_{\mathcal{W}}$, the map $\Delta^{\mathcal{W}}$ is a vertical coproduct with counit $\epsilon^{\mathcal{W}}: \mathcal{W} \rightarrow \mathbb{C}_{\oboxtimes}$:
$$
	\epsilon^{\mathcal{W}}(w) = \delta_{w = \emptyset^{n}} 1_{n},~ w \in \mathcal{W}(n,m).
$$
Notice that $(\mathcal{W}, \Delta^{\mathcal{W}}, \varepsilon^{\mathcal{W}})$ is a conilpotent \raisebox{-1pt}{$\oboxtimes$}$\gtimes$-bialgebra, since $\mathcal{W}(n,n) = \mathbb{C},~n\geq1$ and $\Delta^{\mathcal{W}}$ splits as
\begin{equation*}
	\Delta^{\mathcal{W}}(w) = \{\emptyset\}^{n}\oboxtimes w + w \oboxtimes \{\emptyset \}^{m} + \bar{\Delta}^{\mathcal{W}}(w),~ w \in \mathcal{W}(n,m),~ n\neq m
\end{equation*}
with $(\bar{\Delta}^{\mathcal{W}})^{n}(w)=0$ if $n \geq |w|$.
We now proceed with a similar construction we gave for the operad of non-crossing partitions. Let $w$ a word in $\mathcal{W}$ but not contained in $\mathbb{C}_{\oboxtimes}$. We denote by $w^{1}$ the first letter of the first non-empty word in $w$. Then $\bar{\Delta}(w) = \Delta_{\prec}(w) + \Delta_{\succ}(w)$ with
\begin{equation}
	\label{eqn:unshuffle}
	\Delta_{\prec}(w) = \sum_{\substack{\alpha,\beta \in \mathcal{W}, \\ w=\nabla_{\mathcal{W}}(\alpha \oboxtimes \beta),\\ w^{1}\in\alpha,\,\alpha \neq w}} \alpha \oboxtimes \beta, \quad \Delta_{\succ}^{+,\mathcal{W}}=\sum_{\substack{\alpha,\beta \in \mathcal{W}, \\ w=\nabla_{\mathcal{W}}(\alpha \oboxtimes \beta),\\ w^{1}\in \beta,\,\beta \neq w}} \alpha \oboxtimes \beta.
\end{equation}
Finally, define $S^{\mathcal{W}}: \mathcal{W} \rightarrow \mathcal{W}$ as the unique horizontal morphism such that:
\begin{equation}
	\label{eqn:antipode}
	S^{\mathcal{W}}(a_{1}\cdots a_{p}) = (-1)^{p}a_{1}\cdots a_{p},\quad a_{1}\cdots a_{p} \in \bar{T}(\mathcal{A}).
\end{equation}

\begin{proposition}
	\label{prop:shuffleword}
	$(\mathcal{W},\Delta^{\mathcal{W}}_{\prec},\Delta^{\mathcal{W}}_{\succ},\nabla_{\mathcal{W}},S^{\mathcal{W}})$ is an unshuffle \raisebox{-1pt}{$\oboxtimes$}$\gtimes$-Hopf algebra.
\end{proposition}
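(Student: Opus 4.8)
The plan is to mirror, step for step, the proof that $T_{\gtimes}(\mathcal{N}\mathcal{C})$ is an unshuffle $\oboxtimes\gtimes$-Hopf algebra (Propositions \ref{prop:unshufflenc} and \ref{prop:ncunshuffle}), since $\mathcal{W} = T_{\gtimes}(\bar{T}(\mathcal{A}))$ is constructed in exact parallel, with the word-insertion composition $\rho_{\mathcal{W}\mathcal{I}}$ (hence $\nabla_{\mathcal{W}}$) playing the role of the gap-insertion composition. The conilpotent $\oboxtimes\gtimes$-bialgebra structure $(\mathcal{W},\Delta^{\mathcal{W}},\varepsilon^{\mathcal{W}})$ has already been recorded, so three things remain to check: the unshuffle relations \eqref{eqn:unshufflerel} for $\Delta_{\prec}^{\mathcal{W}}$ and $\Delta_{\succ}^{\mathcal{W}}$, the compatibility (module) conditions of Definition \ref{def:unshufflebialgebra}, and the antipode identities for $S^{\mathcal{W}}$. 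Throughout, the role played in the partition case by the marked point $1$ (the condition $1\in L$ versus $1\in U$ in Definition \ref{def:unshufflenc}) is played here by the distinguished first letter $w^{1}$ of $w$, and a factorization $w=\nabla_{\mathcal{W}}(\alpha\oboxtimes\beta)$ is the word-insertion analogue of a cut $(L,U)$, with $\alpha$ playing the role of the lowerset; this is exactly the bookkeeping encoded in \eqref{eqn:unshuffle}.

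First I would introduce the notion of a cut of a word, and of a compatible pair of cuts, precisely as $\mathrm{cut}(w)$ and the compatible triples were set up for partitions. Because $\nabla_{\mathcal{W}}$ is associative (it descends from an operadic composition), $\Delta^{\mathcal{W}}$ is coassociative, and iterating it writes $(\Delta^{\mathcal{W}}\oboxtimes I)\circ\Delta^{\mathcal{W}}(w)=(I\oboxtimes\Delta^{\mathcal{W}})\circ\Delta^{\mathcal{W}}(w)$ as a single sum over triple factorizations $w=\nabla_{\mathcal{W}}(\alpha\oboxtimes\nabla_{\mathcal{W}}(\mu\oboxtimes\nu))$. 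Splitting this sum according to whether $w^{1}$ lies in $\alpha$, in $\mu$, or in $\nu$ produces the three relations of \eqref{eqn:unshufflerel} verbatim: the $w^{1}\in\alpha$ part gives $(\Delta_{\prec}\oboxtimes I)\circ\Delta_{\prec}=(I\oboxtimes\Delta)\circ\Delta_{\prec}$, the $w^{1}\in\mu$ part gives $(\Delta_{\succ}\oboxtimes I)\circ\Delta_{\prec}=(I\oboxtimes\Delta_{\prec})\circ\Delta_{\succ}$, and the $w^{1}\in\nu$ part gives $(\Delta\oboxtimes I)\circ\Delta_{\succ}=(I\oboxtimes\Delta_{\succ})\circ\Delta_{\succ}$. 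This is the heart of the argument and the step I expect to require the most care, since one must keep the first-letter tracking consistent across iterated factorizations.

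Next I would verify the compatibility conditions of Definition \ref{def:unshufflebialgebra}. The full relation $\Delta\circ\rho=\rho^{\oboxtimes}\circ(\Delta\gtimes\Delta)$ holds by construction, because $\Delta^{\mathcal{W}}$ is defined as the unique horizontal (concatenation) algebra morphism. For the half-coproduct relations, note that the first letter of a horizontal concatenation $p\gtimes q$ with $p\notin\pmb{\mathbb{C}}_{\oboxtimes}$ is the first letter of $p$; hence $\Delta_{\prec}^{+}$ and $\Delta_{\succ}^{+}$ act only through the $p$-factor while $q$ receives the full coproduct $\Delta$, which yields the two module identities for $p\gtimes q$. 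When instead $p=1_{m}$ is empty, the distinguished letter sits in $q$, and one recovers the relation \eqref{eqn:modulco}; this is the only place where the degenerate empty-word cases must be treated separately, as was done for the prefixes $\{\emptyset\}^{q}$ in the partition setting.

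Finally I would check the antipode. On a single word $a_{1}\cdots a_{p}\in\bar{T}(\mathcal{A})$, the factorizations $a_{1}\cdots a_{p}=\nabla_{\mathcal{W}}(\alpha\oboxtimes\beta)$ are indexed by the $2^{p}$ subsets $T\subseteq\{1,\ldots,p\}$ recording which letters are inserted (forming $\alpha$) and which remain in the outer word (grouped into the gaps to form $\beta$). Applying $\nabla_{\mathcal{W}}\circ(S^{\mathcal{W}}\oboxtimes I)$ reconstitutes $a_{1}\cdots a_{p}$ from each cut, while $S^{\mathcal{W}}$ contributes the sign $(-1)^{|T|}$; summing over all cuts collapses, exactly as in Proposition \ref{prop:ncunshuffle}, to $\big(\sum_{k=0}^{p}\binom{p}{k}(-1)^{k}\big)\,a_{1}\cdots a_{p}$, which vanishes for $p\geq 1$ and equals $(\eta\circ\varepsilon^{\mathcal{W}})$ on the empty word. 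The mirror identity with $(I\oboxtimes S^{\mathcal{W}})$ is identical. Since $S^{\mathcal{W}}$, $\Delta^{\mathcal{W}}$ and $\nabla_{\mathcal{W}}$ are all horizontal morphisms and $\mathcal{W}$ is conilpotent, the identity verified on single words propagates to all of $\mathcal{W}$, establishing that $(\mathcal{W},\Delta_{\prec}^{\mathcal{W}},\Delta_{\succ}^{\mathcal{W}},\nabla_{\mathcal{W}},S^{\mathcal{W}})$ is an unshuffle $\oboxtimes\gtimes$-Hopf algebra. As in the statement, the routine verifications are those already carried out in \cite{ebrahimi2016splitting} for the scalar case and adapt to the present setting without change.
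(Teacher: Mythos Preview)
Your proposal is correct and follows essentially the same approach as the paper: both arguments introduce lowersets/uppersets (factorizations) and triple cuts of words, and verify the three unshuffle relations by partitioning the sum over triple factorizations according to whether $w^{1}$ lies in the first, middle, or last factor. Your write-up is in fact more complete than the paper's sketch, as you also spell out the module compatibility of Definition \ref{def:unshufflebialgebra} and the binomial antipode computation, both of which the paper leaves implicit.
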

\begin{proof}
We only sketch the proof, the same machinery of cuts and admissible cuts expounded for the gap-insertion operad applies here.
Let $w$ be a word in $\mathcal{W}$ containing at least one non-empty word. By definition, such a word can be written $w = \emptyset^{q} | x | w^{\prime}$, with $x$ a word in $T(\mathcal{A})$ not equal to the empty word. We call a lowerset of $x$ a subset $S$ of letters of $x$. Then a lowerset determines a sequence of words $S^{|} = U_{0} | \ldots |U_{|S|}$, each of the $U_{i}$ being either an empty words or a connected component of the complementary set of $S$ in $x$. We have:
\begin{equation}
	\Delta^{\mathcal{W}}(x) = \sum_{S \subset x} S \oboxtimes S^{|}.
\end{equation}
The unique lowerset of the empty word is the empty word itself and $\emptyset^{|}=\emptyset$. The notion of is readily extended to words on words.
An upperset of $x$ is a sequence $U_{0}|\ldots|U_{s}$ such that each of the $U_{i}$ is either the empty word of a subword of $x$, with the condition that there exists a subword $L \in x$ (a lowerset) of length $s$ such that $x = \nabla_{\mathcal{W}} (L \oboxtimes U_{0}|\ldots|U_{s})$. Notice that the only upperset of the empty word is the empty word itself. The notion of lowerset is then canonically extended to words on words. We denote by $U^{\cdot}$ the lowerset associated with an upperset of $U$.

A triple cut of $w$ is a triplet $(L,M,U)$ such that $L$ is a lowerset of $w$, $U$ is an upperset of $w$, $L^{|} = \nabla M \oboxtimes U$ and $U^{\cdot} = \nabla L \oboxtimes U$. In that case, $M$ is a lowerset of $L^{|}$ and $U=M^{|}$. We denote by Cut$_{2}(w)$ the set of triple cuts of $w$ such that $L,M,U$ are not in $\pmb{\mathbb{C}}_{\oboxtimes}$. The following relations hold:

\begin{align*}
	 & (\Delta^{\mathcal{W}}_{\prec} \oboxtimes \textrm{id}) \circ \Delta^{\mathcal{W}}_{\prec} (w) = \sum_{\substack{(L,M,U)\in \mathrm{Cut}_{2}(w) \\ w^{1} \in L}} L \oboxtimes M^{} \oboxtimes U^{} = \textrm{id} \oboxtimes \Delta^{\mathcal{W}} \circ \Delta^{\mathcal{W}}_{\prec} (w),\\
	 & (\Delta^{\mathcal{W}}_{\succ} \oboxtimes \textrm{id}) \circ \Delta^{\mathcal{W}}_{\prec} (w) = \sum_{\substack{(L,M,U)\in \mathrm{Cut}_{2}(w) \\ w^{1} \in M}} L \oboxtimes M^{} \oboxtimes U^{} = \textrm{id} \oboxtimes \Delta^{\mathcal{W}}_{\prec} \circ \Delta^{\mathcal{W}}_{\succ} (w)\\
	 & (\Delta^{\mathcal{W}} \oboxtimes \textrm{id}) \circ \Delta^{\mathcal{W}}_{\succ} (w)= \sum_{\substack{(L,M,U)\in \mathrm{Cut}_{2}(w)          \\ w^{1} \in U}} L \oboxtimes M^{} \oboxtimes U^{} = \textrm{id} \oboxtimes \Delta^{\mathcal{W}}_{\succ} \circ \Delta^{\mathcal{W}}_{\prec} (w)
\end{align*}
\end{proof}
We set $\mathcal{W}^{+} = \bigoplus_{n\neq m} \mathcal{W}(n,m)$. Proposition \ref{prop:shuffleword} implies that the class of bicollection homomorphisms $\mathrm{Hom}_{\mathrm{Coll}_{2}}(\mathcal{W}^{+}, T_{\gtimes}(\mathrm{Hom}(B)))$ is a shuffle algebra. We set
\begin{equation}
	\overline{\mathrm{Hom}}_{\mathrm{Coll}_{2}}(\mathcal{W}, T_{\gtimes}(\mathrm{Hom}(B))) = \mathbb{C}\eta_{\mathrm{Hom}(B)} \circ \varepsilon^{\mathcal{W}} \oplus \mathrm{Hom}_{\mathrm{Coll}_{2}}(\mathcal{W}^{+}, T_{\gtimes}(\mathrm{Hom}(B)))
\end{equation}
The following equations endow $\overline{\mathrm{Hom}}_{\mathrm{Coll}_{2}}(\mathcal{W}, T_{\gtimes}(\mathrm{Hom}(B)))$ with the structure of an augmented shuffle algebra:
\begin{align*}
	&\eta_{\mathrm{Hom}(B)} \circ \varepsilon^{\mathcal{W}} \prec \alpha = \alpha \succ \eta_{\mathrm{Hom}(B)} \circ \varepsilon^{\mathcal{W}} = 0,\quad
	\eta_{\mathrm{Hom}(B)} \circ \varepsilon^{\mathcal{W}} \succ \alpha = \alpha \prec \eta_{\mathrm{Hom}(B)} \circ \varepsilon^{\mathcal{W}} = \alpha
\end{align*}

\subsection{The splitting map}
We define $Sp : \mathcal{W} \rightarrow T_{\gtimes}(\mathcal{N}\mathcal{C})$ the \emph{splitting map} in our settings, following \cite{ebrahimi2016splitting}. It is an horizontal morphism extending to $W$:
\begin{equation*}
	Sp(a_{1}\cdots a_{p}) = \sum_{\pi \in \mathrm{NC}(p)} \pi \otimes (a_{1} \cdots a_{p}), \quad a_{1}\cdots a_{p} \in \bar{T}(\mathcal{A}),
\end{equation*}
\begin{proposition}
	\label{prop:unshuffleword}
	The horizontal algebra morphism $Sp$ is an unshuffle morphism, which means:
	$$(Sp \oboxtimes Sp) \circ \Delta_{\prec,\succ}^{\mathcal{W}} = \Delta_{\prec,\succ}^{T_{\gtimes}(\mathcal{N}\mathcal{C})} \circ Sp,~\varepsilon^{T_{\gtimes}(\mathcal{N}\mathcal{C})} \circ Sp = \varepsilon^{\mathcal{W}}.$$
\end{proposition}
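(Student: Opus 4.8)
The plan is to exploit that $Sp$, the two counits, and all the coproducts involved are horizontal (i.e. $\gtimes$-multiplicative) morphisms, so that the whole statement reduces to a computation on a single word $x = a_1 \cdots a_p \in \bar{T}(\mathcal{A})$, these being the $\gtimes$-generators of $\mathcal{W}$. Indeed $Sp$ is a horizontal algebra morphism by construction, and the half-unshuffle coproducts on both $T_{\gtimes}(\mathcal{N}\mathcal{C})$ and $\mathcal{W}$ are pinned down on an arbitrary word by their values on a single (possibly empty) letter together with the full coproduct, through the extension rule \eqref{eqn:extension} and its evident $\mathcal{W}$-analogue read off from \eqref{eqn:unshuffle}. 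Hence, once the identities are known on single words and the full identity $(Sp \oboxtimes Sp)\circ\Delta^{\mathcal{W}} = \Delta^{T_{\gtimes}(\mathcal{N}\mathcal{C})}\circ Sp$ is established, everything propagates to all of $\mathcal{W}$ by induction on the number of letters. The counit relation is immediate on a single word: for $p\ge 1$ every $\pi\in\mathrm{NC}(p)$ is non-empty, so $\varepsilon^{T_{\gtimes}(\mathcal{N}\mathcal{C})}(Sp(x)) = 0 = \varepsilon^{\mathcal{W}}(x)$, while $Sp(\emptyset)=\{\emptyset\}$ gives $\varepsilon^{T_{\gtimes}(\mathcal{N}\mathcal{C})}(Sp(\emptyset)) = 1_1 = \varepsilon^{\mathcal{W}}(\emptyset)$; horizontality then extends it.

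The heart of the argument is a colour-preserving bijection realizing the coproduct identity on $x=a_1\cdots a_p$. On the word side $\Delta^{\mathcal{W}}(x)=\sum_{S\subset x} S\oboxtimes S^{|}$, where $S$ is a subword of selected letters and $S^{|}=U_0\gtimes\cdots\gtimes U_{|S|}$ collects the maximal runs of unselected letters sitting in the gaps of $S$; applying $Sp\oboxtimes Sp$ and expanding each factor produces the data of a subset $S$, a partition $\sigma\in\mathrm{NC}(|S|)$ on the selected legs, and a partition $\tau_i\in\mathrm{NC}(|U_i|)$ on each run $U_i$. On the partition side $Sp(x)=\sum_{\pi\in\mathrm{NC}(p)}\pi\otimes x$, and by Proposition \ref{prop:formulacoproduct}, $\Delta^{T_{\gtimes}(\mathcal{N}\mathcal{C})}(\pi\otimes x)=\sum_{(L,U)\in cut(\pi)} L\oboxtimes U$. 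I would now observe that a cut $(L,U)$ is exactly a factorization $\pi=\nabla(L\oboxtimes U)$ in the gap-insertion operad, with $L$ an outer partition on the lowerset legs and $U$ the inner partitions inserted into the gaps of $L$. Matching $L$ with $\sigma\otimes S$ and the factors of $U$ with the $\tau_i\otimes U_i$ sets up a bijection between the two index sets under which both the colours and the operadic reconstruction of $\pi$ agree termwise, since $\nabla$ is gap-insertion and the colours of $\pi$ are read off in order as the concatenation of the colours on $L$ and on the successive gaps. This yields the full identity $(Sp\oboxtimes Sp)\circ\Delta^{\mathcal{W}}=\Delta^{T_{\gtimes}(\mathcal{N}\mathcal{C})}\circ Sp$.

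To pass from $\Delta$ to its two halves I would refine this bijection by tracking the first leg. The split \eqref{eqn:unshuffle} records whether $w^1=a_1$ lies in the selected subword $S$ (contributing to $\Delta^{\mathcal{W}}_{\prec}$) or in the first run $U_0$ (contributing to $\Delta^{\mathcal{W}}_{\succ}$), while Definition \ref{def:unshufflenc} splits according to whether the block of $\pi$ containing the leg $1$ lies in $L$ or in $U$. The key compatibility is that $a_1\in S$ forces $U_0=\emptyset$, so leg $1$ is the first leg of $L$ and hence $1\in L$, whereas $a_1\in U_0$ places leg $1$ inside $\tau_0$, so $1\in U$. Thus the bijection restricts to a bijection between the $\prec$-terms on both sides, and likewise for the $\succ$-terms, giving $(Sp\oboxtimes Sp)\circ\Delta^{\mathcal{W}}_{\prec,\succ}=\Delta^{T_{\gtimes}(\mathcal{N}\mathcal{C})}_{\prec,\succ}\circ Sp$. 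I expect the main obstacle to be essentially bookkeeping: verifying that the colours carried by $S$ and by the runs $U_i$ reassemble exactly into the colouring of $\pi$, and that the first-leg dictionary survives the horizontal extension to words on words rather than single letters. Both points run in complete parallel with the scalar computation of \cite{ebrahimi2016splitting}, the colours being a passive decoration along the legs.
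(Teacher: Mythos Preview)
Your proposal is correct and follows essentially the same route as the paper: reduce by horizontality to a single word $a_1\cdots a_p$, then set up the colour-preserving bijection between pairs (subset $S$, non-crossing partitions on $S$ and on each gap $U_i$) and pairs $(\pi,(L,U))$ with $(L,U)\in cut(\pi)$, exactly as the paper does in its proof. The paper only spells out the full-coproduct identity and defers the half-shuffle refinement to \cite{ebrahimi2016splitting}; your tracking of the first leg ($a_1\in S \Leftrightarrow 1\in L$, $a_1\in U_0 \Leftrightarrow 1\in U$) is the correct way to complete that step and matches what the reference does.
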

\begin{proof}
	The arguments exposed in \cite{ebrahimi2016splitting} can be used verbatim to prove the result. Let us prove the statement involving the two coproducts $\Delta^{\mathcal{W}}$ and $\Delta^{T_{\gtimes}(\mathcal{N}\mathcal{C})}$. It is enough to show that
	\begin{equation}
		(Sp\oboxtimes Sp) \circ \Delta^{\mathcal{W}} (a_{1}\cdots a_{n})= \Delta^{T_{\gtimes}(\mathcal{N}\mathcal{C})} (Sp(a_{1}\cdots a_{n})),~a_{1}\cdots a_{n} \in \mathcal{A}^{\otimes n}.
	\end{equation}
	\begin{align*}
		\Delta^{T_{\gtimes}(\mathcal{N}\mathcal{C})} (Sp(a_{1}\cdots a_{n})) & = \sum_{\pi \in \mathrm{NC}(p)} \sum_{\nabla(\alpha \oboxtimes (\beta_{1},\ldots\beta_{|\alpha|})) = \pi} (\alpha\otimes a_{\alpha})\oboxtimes (\beta_{1}\otimes a_{\beta_{1}}) \otimes \cdots \otimes (\beta_{|\alpha|}\otimes a_{\beta_{|\alpha|}})
	\end{align*}
	In the last equation, the second sum runs over non-crossing partitions $\alpha$, and $\beta_{1},\ldots,\beta_{\alpha}$ seen as subsets of $\pi$, with the condition that the operadic composition (in the operad $\mathcal{N}\mathcal{C}$) $\nabla(\alpha \oboxtimes (\beta_{1},\ldots\beta_{|\alpha|}))$ of their \emph{standard representatives} is equal to $\pi$. The notation $a_{\alpha}$ is the word in $T(\mathcal{A})$ obtained from $a$ by concatenation of the linearly order set of letters partitioned by $\alpha$, by convention $a_{\{\emptyset\}} = \emptyset$. In the vein of the proof of the preceding proposition, if $S \subset \llbracket 1,n\rrbracket$ is a (possibly empty) set, we denote by $U_{0},\ldots,U_{|S|}$ the words in $T(\mathcal{A})$ such that $a=\nabla_{\mathcal{W}}(a_{S} \oboxtimes U_{0}\gtimes \cdots \gtimes U_{|S|})$. Then, we see that:
	\begin{align*}
		\Delta^{T_{\gtimes}(\mathcal{N}\mathcal{C})}(Sp(a_{1}\cdots a_{n})) & = \sum_{S\subset\llbracket 1,n\rrbracket}\sum_{\substack{\alpha \in \mathrm{NC}(S), \\  \beta_{1} \in \mathrm{NC}(U_{0}),\ldots,\beta_{|\alpha|} \in \mathrm{NC}(U_{|S|})}} (\alpha\otimes a_{S})\oboxtimes (\beta_{0}\otimes U_{0}) \otimes \cdots \otimes (\beta_{|\alpha|}\otimes U_{|S|}) \\
		                                                                    & = (Sp \oboxtimes Sp)\circ \Delta^{\mathcal{W}}(a_{1}\cdots a_{n}).
	\end{align*}
\end{proof}
An equivalent statement to Proposition \ref{prop:unshuffleword} is that the dual of the splitting map induced a morphism between the two augmented unshuffle algebra of bicollections morphisms on $\mathcal{W}$ and on $T_{\gtimes}(\mathcal{N}\mathcal{C})$ with values in $T_{\gtimes}(\mathrm{Hom}(B))$.
\begin{remarque}
	The map $\Delta^{\mathcal{W}}$ is an horizontal morphism (by definition) but is not a PROS morphism: $\nabla_{T_{\gtimes}(\mathcal{N}\mathcal{C})} \circ (Sp \oboxtimes Sp) \neq Sp \circ \nabla_{\mathcal{W}}$. As a consequence, the splitting morphism $Sp$ is not a $\oboxtimes \gtimes$-Hopf algebra morphism, in particular:
	\begin{equation*}
		(S^{T_{\gtimes}(\mathcal{N}\mathcal{C})} \circ Sp)(w)  = \sum_{\pi \in \mathrm{Int}(p)} (-1)^{\sharp \pi} \otimes w \neq (Sp \circ S^{\mathcal{W}})(w) = \sum_{\pi \in \mathrm{NC}(p)} (-1)^{|w|} \pi \otimes w.
	\end{equation*}
\end{remarque}



\begin{definition}
	An \emph{infinitesimal morphism} $k:\mathcal{W} \rightarrow T_{\gtimes}(\mathrm{Hom}(B))$ is a bicollection map equal to zero on every word in $\mathcal{W}$ except that
	\begin{equation}
		k(\emptyset^{p}|w|\emptyset^{q}) = id^{p} | k(w) | id^{q},\quad w \in T(\mathcal{A}),~ w \neq \emptyset.
	\end{equation}
\end{definition}

The following lemma is a corollary of Proposition \ref{prop:unshuffleword} and the computations of the shuffle exponentials (of infinitesimal morphisms from the gap-insertion PROS to the endomorphism PROS of $B$) of the previous sections.

\begin{lemma}
	\label{lem:computationlefthalfshuffle}
	Let $k : \mathcal{W} \rightarrow T_{\gtimes}(\mathrm{Hom}(B))$ be an infinitesimal morphism satisfying
	\begin{equation*}
		k(a_{1}\otimes \cdots \otimes a_{p}) \circ_{1} k(a^{\prime}_{1}\otimes \cdots \otimes a^{\prime}_{q}) = k(a^{\prime}_{1}\otimes \cdots \otimes a^{\prime}_{q}) \circ_{q+1} k(a_{1}\otimes \cdots \otimes a_{p}),~a_{i},a^{\prime}_{i} \in \mathcal{A}.
	\end{equation*} Then
	the following formulas hold
	\begin{align}
		\label{eqn:exponentialword}
		 & \exp_{\prec}(k)(w)(b_{0} \otimes \cdots \otimes b_{p}) = \sum_{\pi \in \mathrm{NC}(p)} \exp^{T_{\gtimes}(\mathcal{N}\mathcal{C})}_{\prec}(\underline{k})(\pi \otimes w)(b_{0},\ldots,b_{p}),\quad w \in \mathcal{A}^{\otimes p},                    \\
		 & \exp_{\succ}(k)(w)(b_{0} \otimes \cdots \otimes b_{p}) = \sum_{\pi \in \mathrm{Int}(p)} \exp^{T_{\gtimes}(\mathcal{N}\mathcal{C})}_{\succ}(\pi\otimes w)(b_{0},\ldots,b_{p}),\quad w \in \mathcal{A}^{\otimes p} \nonumber
	\end{align}
	with $\underline{k}$ the infinitesimal morphism on $T_{\gtimes}(\mathcal{N}\mathcal{C})$ satisfying $\underline{k}(\pi\otimes w)=\delta_{\pi = 1_{n}}k(w)$.

\end{lemma}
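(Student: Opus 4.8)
The plan is to realise the lemma as a corollary of the fact, recorded in the dual statement following Proposition \ref{prop:unshuffleword}, that precomposition with the splitting map $Sp$ is a morphism of augmented shuffle algebras from $\overline{\mathrm{Hom}}(T_{\gtimes}(\mathcal{N}\mathcal{C}), T_{\gtimes}(\mathrm{Hom}(B)))$ to $\overline{\mathrm{Hom}}_{\mathrm{Coll}_{2}}(\mathcal{W}, T_{\gtimes}(\mathrm{Hom}(B)))$. Indeed, for bicollection morphisms $f,g$ on $T_{\gtimes}(\mathcal{N}\mathcal{C})$ one has $(f\circ Sp)\prec(g\circ Sp) = \nabla_{\mathrm{Hom}(B)}\circ((f\circ Sp)\oboxtimes(g\circ Sp))\circ\Delta^{\mathcal{W}}_{\prec} = \nabla_{\mathrm{Hom}(B)}\circ(f\oboxtimes g)\circ(Sp\oboxtimes Sp)\circ\Delta^{\mathcal{W}}_{\prec}$, and Proposition \ref{prop:unshuffleword} rewrites the last composite as $(f\prec g)\circ Sp$; the identical computation handles $\succ$, while $\varepsilon^{T_{\gtimes}(\mathcal{N}\mathcal{C})}\circ Sp = \varepsilon^{\mathcal{W}}$ takes care of the units. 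Since the half-shuffle exponentials are the unique (pointwise-finite) solutions of the fixed-point equations $X = \eta\circ\varepsilon + k\prec X$ and $X = \eta\circ\varepsilon + X\succ k$, such a morphism commutes with $\exp_{\prec}$ and $\exp_{\succ}$; in particular $\exp_{\prec}(\underline{k}\circ Sp)=\exp^{T_{\gtimes}(\mathcal{N}\mathcal{C})}_{\prec}(\underline{k})\circ Sp$, and likewise for $\succ$.

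Next I would identify the pulled-back infinitesimal morphism, namely show that $k = \underline{k}\circ Sp$ on $\mathcal{W}^{+}$. On a single letter $\emptyset^{p}\,|\,u\,|\,\emptyset^{q}$ with $u = a_{1}\cdots a_{p}\in T(\mathcal{A})$, horizontality of $Sp$ gives $Sp(\emptyset^{p}|u|\emptyset^{q}) = \emptyset^{p}\,|\,Sp(u)\,|\,\emptyset^{q}$ with $Sp(u)=\sum_{\pi\in\mathrm{NC}(p)}\pi\otimes u$; applying the infinitesimal morphism $\underline{k}$ and using $\underline{k}(\pi\otimes u)=\delta_{\pi=1_{p}}k(u)$ collapses the sum to its single-block term, yielding $\mathrm{id}^{p}\,|\,k(u)\,|\,\mathrm{id}^{q} = k(\emptyset^{p}|u|\emptyset^{q})$. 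On any word of $\mathcal{W}^{+}$ carrying at least two non-empty letters both sides vanish, because $Sp$ of such a word is a sum of words on partitions each having at least two non-empty blocks, on which $\underline{k}$ is zero. Hence $k=\underline{k}\circ Sp$.

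I would then verify that $\underline{k}$ satisfies the hypotheses of Propositions \ref{prop:leftshuffle} and \ref{prop:rightshuffle}: by construction $\underline{k}(\pi)=0$ whenever $\sharp\pi>1$. The commutativity assumption on $k$ says that inserting $k(a'_{1}\cdots a'_{q})$ into the first gap of $k(a_{1}\cdots a_{p})$ agrees with inserting $k(a_{1}\cdots a_{p})$ into the last gap of $k(a'_{1}\cdots a'_{q})$; since $Sp$ sends a length-$p$ word to partitions of $p$ elements and $\underline{k}(1_{p})$ equals $k$ of the corresponding length-$p$ word, this is exactly the relation $\underline{k}(1_{n})\circ_{n}\underline{k}(1_{m})=\underline{k}(1_{m})\circ_{1}\underline{k}(1_{n})$ demanded in those propositions, once the \emph{first-gap/last-gap} indices are matched across the shift between word length and operad arity. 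Thus Proposition \ref{prop:leftshuffle} applies to $\exp^{T_{\gtimes}(\mathcal{N}\mathcal{C})}_{\prec}(\underline{k})$, and Proposition \ref{prop:rightshuffle} guarantees that $\exp^{T_{\gtimes}(\mathcal{N}\mathcal{C})}_{\succ}(\underline{k})$ is a \emph{boolean} horizontal morphism, vanishing on every non-crossing partition with two nested blocks.

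Finally I would assemble the two identities. Combining $\exp_{\prec}(k)=\exp^{T_{\gtimes}(\mathcal{N}\mathcal{C})}_{\prec}(\underline{k})\circ Sp$ with $Sp(w)=\sum_{\pi\in\mathrm{NC}(p)}\pi\otimes w$ and evaluating at $(b_{0},\ldots,b_{p})$ gives the first formula directly (this part in fact needs only that $k=\underline{k}\circ Sp$ and the shuffle-morphism property, not the commutativity hypothesis). For the second, the same pull-back yields $\exp_{\succ}(k)(w)=\sum_{\pi\in\mathrm{NC}(p)}\exp^{T_{\gtimes}(\mathcal{N}\mathcal{C})}_{\succ}(\underline{k})(\pi\otimes w)$, and since $\exp^{T_{\gtimes}(\mathcal{N}\mathcal{C})}_{\succ}(\underline{k})$ is boolean only the interval partitions survive, so the sum restricts to $\pi\in\mathrm{Int}(p)$. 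The step demanding the most care is precisely the translation of the commutativity hypothesis on $k$ into the gap-insertion relation for $\underline{k}$: the explicit operadic labels differ between the word-insertion and gap-insertion pictures and must be reconciled through $Sp$; everything else is a formal consequence of Propositions \ref{prop:unshuffleword}, \ref{prop:leftshuffle} and \ref{prop:rightshuffle}.
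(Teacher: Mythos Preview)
Your proposal is correct and follows exactly the approach the paper indicates: the lemma is stated there as a corollary of Proposition \ref{prop:unshuffleword} (the splitting map is an unshuffle morphism, so pull-back along $Sp$ intertwines the half-shuffle exponentials) together with the computations of Propositions \ref{prop:leftshuffle} and \ref{prop:rightshuffle}. You have supplied the details the paper omits, including the identification $k=\underline{k}\circ Sp$, the reduction of the $\mathrm{NC}$-sum to an $\mathrm{Int}$-sum via the boolean property of $\exp_{\succ}^{T_{\gtimes}(\mathcal{N}\mathcal{C})}(\underline{k})$, and the observation that the first identity does not actually require the commutativity hypothesis.
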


\begin{proposition}[Operator-valued moment-cumulant relations]
	\label{prop:momentcumulantsrelation}
	With the notation introduced so far, let $k: \mathcal{W} \rightarrow T_{\gtimes}(\mathrm{Hom}(B))$ and $b : \mathcal{W} \rightarrow T_{\gtimes}(\mathrm{Hom}(B))$ be the infinitesimal morphisms on $\mathcal{W}$ such that:
	\begin{equation*}
		k(a_{1}\cdots a_{n})(b_{0},\ldots,b_{n}) = \kappa_{n}(b_{0}a_{1},\ldots,a_{n}b_{n}),~b(a_{1}\cdots a_{n})(b_{0},\ldots,b_{n}) = \beta_{n}(b_{0}a_{1},\ldots,a_{n}b_{n}).
	\end{equation*}
Besides denotes by $E$ the horizontal morphism on $\mathcal{W}$ with values in $T_{\gtimes}(\mathrm{Hom}(B))$ such that:
	\begin{equation*}
		E(a_{1}\ldots a_{n})(b_{0},\ldots,b_{n}) = E(b_{0}\cdot_{\mathcal{A}} a_{1} \cdots a_{n}\cdot_{\mathcal{A}}b_{n}).
	\end{equation*}
	Then:
	\begin{equation*}
		E = \eta_{\mathrm{Hom}(B)}\circ \varepsilon^{\mathcal{W}} + k\prec E = \eta_{\mathrm{Hom}(B)}\circ \varepsilon^{\mathcal{W}} + E\succ b.
	\end{equation*}
\end{proposition}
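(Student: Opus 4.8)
The plan is to identify the horizontal morphism $E$ on $\mathcal{W}$ with the left half-shuffle exponential $\exp_\prec(k)$ and, symmetrically, with the right half-shuffle exponential $\exp_\succ(b)$; once these identifications are made, the two asserted relations are simply the defining fixed point equations of the half-shuffle exponentials recalled in Section~\ref{sec:halfshuffle}.

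First I would record that $k$ and $b$ satisfy the commutation hypotheses needed downstream. Because the operator-valued free and boolean cumulants are $B$-bimodule maps, the identity $k(a_1\cdots a_p)\circ_1 k(a'_1\cdots a'_q) = k(a'_1\cdots a'_q)\circ_{q+1}k(a_1\cdots a_p)$ holds for exactly the reason the relation $E_n\circ_n E_m = E_m\circ_1 E_n$ was verified in Section~\ref{sec:gapinsertion}, and likewise for $b$. Hence Lemma~\ref{lem:computationlefthalfshuffle} applies, and moreover the associated infinitesimal morphism $\underline k$ on $T_\gtimes(\mathcal{N}\mathcal{C})$ satisfies the hypotheses of Proposition~\ref{prop:leftshuffle}, so that $\exp^{T_\gtimes(\mathcal{N}\mathcal{C})}_\prec(\underline k)$ is the operadic (PROS) morphism $K$ extending the values $\underline k(1_n)$; similarly $\underline b$ satisfies the hypotheses of Proposition~\ref{prop:rightshuffle}, producing the boolean morphism implementing $\beta_I$.

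Next I would check the equalities on the generating letters $w = a_1\cdots a_p\in\bar T(\mathcal{A})$. Since $\mathcal{W}=T_\gtimes(\bar T(\mathcal{A}))$ is freely generated as a horizontal algebra by $\bar T(\mathcal{A})$, a horizontal morphism is determined by its restriction to $\bar T(\mathcal{A})$; as $E$, $\exp_\prec(k)$ and $\exp_\succ(b)$ are all horizontal (the latter two by transporting the argument of Proposition~\ref{prop:leftshuffle} to $\mathcal{W}$ via Proposition~\ref{prop:shuffleword}), it suffices to compare them on letters. Applying Lemma~\ref{lem:computationlefthalfshuffle} yields
\[
\exp_\prec(k)(a_1\cdots a_p)(b_0,\ldots,b_p) = \sum_{\pi\in\mathrm{NC}(p)} K(\pi\otimes(a_1\cdots a_p))(b_0,\ldots,b_p),
\]
and by construction $K(\pi\otimes(a_1\cdots a_p))(b_0,\ldots,b_p)$ reproduces Speicher's recursion \eqref{def:epi} with the free cumulant $\kappa$ in place of $E$, i.e.\ it computes $\kappa_\pi$ on $a_1,\ldots,a_p$ with the boundary elements $b_0,\ldots,b_p$ absorbed at the gaps. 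The free moment-cumulant relation \eqref{eqn:momentcumulants1} then collapses the sum to $E(b_0 a_1\cdots a_p b_p) = E(a_1\cdots a_p)(b_0,\ldots,b_p)$, giving $E=\exp_\prec(k)$ on letters, hence everywhere. The boolean case is identical: Lemma~\ref{lem:computationlefthalfshuffle} rewrites $\exp_\succ(b)$ as a sum over $\mathrm{Int}(p)$ of the values of the boolean morphism of Proposition~\ref{prop:rightshuffle}, and the second equality in \eqref{eqn:momentcumulants1} collapses this sum to $E$.

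Finally, since $\exp_\prec(k)$ and $\exp_\succ(b)$ are by definition the solutions of $X=\eta_{\mathrm{Hom}(B)}\circ\varepsilon^{\mathcal{W}}+k\prec X$ and $X=\eta_{\mathrm{Hom}(B)}\circ\varepsilon^{\mathcal{W}}+X\succ b$, the identifications $E=\exp_\prec(k)=\exp_\succ(b)$ immediately yield the two fixed point equations in the statement. I expect the only delicate point to be the bookkeeping in the middle step, namely matching the operadic value $K(\pi\otimes w)$ with the classical cumulant $\kappa_\pi$ including the correct absorption of the boundary elements $b_0,\ldots,b_p$, together with confirming that the half-shuffle exponentials on $\mathcal{W}$ are genuinely horizontal morphisms.
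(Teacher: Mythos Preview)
Your proposal is correct and follows essentially the same route as the paper's proof: both rely on Lemma~\ref{lem:computationlefthalfshuffle} to rewrite the half-shuffle exponentials on $\mathcal{W}$ as sums over $\mathrm{NC}(p)$ (resp.\ $\mathrm{Int}(p)$) of the operadic/boolean morphisms built from $\underline{k}$ and $\underline{b}$, and then collapse those sums via the moment--cumulant relations~\eqref{eqn:momentcumulants1}. The only cosmetic difference is that the paper phrases the argument through the splitting map $Sp$ explicitly---defining $K,B$ on $T_\gtimes(\mathcal{N}\mathcal{C})$, invoking Proposition~\ref{prop:unshuffleword} to transport their fixed point equations to $K\circ Sp$ and $B\circ Sp$ on $\mathcal{W}$, and then identifying these with $E$---whereas you bundle that transport into your invocation of Lemma~\ref{lem:computationlefthalfshuffle} (which is itself stated as a corollary of Proposition~\ref{prop:unshuffleword}).
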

\begin{proof}
	Denote by $\underline{k}$ and $\underline{b}$ the infinitesimal morphisms from $T_{\gtimes}(\mathcal{N}\mathcal{C})$ to  $T_{\gtimes}(\mathrm{Hom}(B))$ defined by:
	\begin{align*}
		 & \underline{k}(\pi\otimes a_{1}\otimes \cdots \otimes a_{n})(b_{0},\ldots,b_{n}) = \delta_{\pi=\mathbf{1}_{n}}\kappa_{n}(b_{0}a_{1},\ldots,a_{n}b_{n}),           \\
		 & \underline{b}(\pi \otimes a_{1}\otimes \ldots \otimes a_{n})(b_{0},\ldots,b_{n}) = \delta_{\pi=\mathbf{1}_{n}}\beta_{n}(a_{1},\ldots,a_{n})(b_{0},\ldots,b_{n}),
	\end{align*}
	where $\kappa_{p}(a_{1},\ldots,a_{n})$ respectively $\beta_{n}(a_{1},\ldots,a_{n})$ are the operator-valued free cumulant respectively boolean cumulants of the random variables $a_{1},\ldots,a_{p}$. Then the maps $k,b : \mathcal{W} \rightarrow T_{\gtimes}(\mathrm{Hom}(B))$ defined by
	\begin{equation}
		k(a_{1}\cdots a_{p}) = (\underline{k} \circ Sp)(a_{1}\cdots a_{p}),~ b (a_{1} \cdots a_{p})= (\underline{b}\circ Sp) (a_{1}\cdots a_{p})
	\end{equation}
	are infinitesimal morphisms on $\mathcal{W}$. Let $K$ and $B$ be the horizontal morphisms from $T_{\gtimes}(\mathcal{N}\mathcal{C})$ to $T_{\gtimes}(\mathrm{Hom}(B))$ solutions of the fixed point equations
	\begin{equation}
		K = \eta_{\mathrm{Hom}(B)} \circ \varepsilon^{T_{\gtimes}(\mathcal{N}\mathcal{C})} + \underline{k} \prec K,~ B = \eta_{\mathrm{Hom}(B)} \circ \varepsilon^{T_{\gtimes}(\mathcal{N}\mathcal{C})} + B \succ \underline{b}.
	\end{equation}
	Owing to Proposition \ref{prop:unshuffleword}, the morphisms $K \circ Sp$ and $E \circ Sp$
	are solutions of the following fixed point equations:
	\begin{equation}
		K \circ Sp = \eta \circ \varepsilon^{\mathcal{W}} + k \prec (K \circ Sp),~
		B \circ Sp  = \eta \circ \varepsilon^{\mathcal{W}} + (B\circ Sp) \succ b.
	\end{equation}
	Now owing to Lemma \ref{lem:computationlefthalfshuffle} and definitions of the free and boolean cumulants, we have
	\begin{equation*}
		(K \circ Sp) (a_{1}\cdots a_{n})(b_{0},\ldots,b_{n}) = (B \circ Sp) (a_{1}\cdots a_{n})(b_{0},\ldots,b_{n}) = E(b_{0}a_{1}\cdot_{\mathcal{A}} \cdots \cdot_{\mathcal{A}}a_{n}b_{n}),~ a_{1},\ldots,a_{n} \in \mathcal{A}.
	\end{equation*}
\end{proof}

\bibliographystyle{plain}
\bibliography{operatorvaluedcumulantmoment}
\nocite{*}
\end{document}